\theoremstyle{remark}
\newtheorem*{defn}{Definition}
\newtheorem*{gwqn}{Grunwald--Wang problem for elliptic curves}
\definecolor{dblue}{rgb}{0,0,0.7}
\newtheoremstyle{mythm}{11pt}{11pt}{\it\color{dblue}}{}{\bf\color{dblue}}{.}{ }{}
\theoremstyle{mythm}
\newtheorem{thm}{Theorem}
\newtheorem{prop}[thm]{Proposition}
\newtheorem{lem}[thm]{Lemma}
\newtheorem{cor}[thm]{Corollary}
\DeclareMathOperator{\GL}{GL}
\DeclareMathOperator{\SL}{SL}
\DeclareMathOperator{\PGL}{PGL}
\DeclareMathOperator{\Gal}{Gal}
\DeclareMathOperator{\Hom}{Hom}
\DeclareMathOperator{\Sel}{Sel}
\DeclareMathOperator{\im}{im}
\DeclareMathOperator{\res}{res}
\DeclareMathOperator{\Aut}{Aut}
\DeclareMathOperator{\Mat}{Mat}
\DeclareMathOperator{\Fr}{Fr}
\DeclareFontFamily{U}{wncy}{}
\DeclareFontShape{U}{wncy}{m}{n}{<->wncyr10}{}
\DeclareSymbolFont{mcy}{U}{wncy}{m}{n}
\DeclareMathSymbol{\Sha}{\mathord}{mcy}{"58}
\renewcommand{\geq}{\geqslant}
\renewcommand{\leq}{\leqslant}
\newcommand{\QQ}{\mathbb{Q}}
\newcommand{\ZZ}{\mathbb{Z}}
\newcommand{\FF}{\mathbb{F}}
\newcommand{\PP}{\mathbb{P}}
\newcommand{\sm}[4]{\bigl(\begin{smallmatrix}#1 & #2 \\ #3 & #4 \end{smallmatrix}\bigr)}
\begin{document}
 \title{Vanishing of some Galois cohomology groups for elliptic curves}
 \author{Tyler Lawson\thanks{Partially supported by NSF DMS-1206008.} \and Christian Wuthrich}
 \maketitle

 \subsection*{Abstract}
 Let $E/\QQ$ be an elliptic curve and $p$ be a prime number, and let $G$ be the Galois group of the extension of $\QQ$ obtained by adjoining the coordinates of the $p$-torsion points on $E$. We determine all cases when the Galois cohomology group $H^1\bigl( G, E[p]\bigr)$ does not vanish, and investigate the analogous question for $E[p^i]$ when $i>1$. We include an application to the verification of certain cases of the Birch and Swinnerton-Dyer conjecture, and another application to the Grunwald--Wang problem for elliptic curves.

 \section{Introduction}
 Let $E$ be an elliptic curve over $\QQ$ and $p$ a prime number. Denote by $K$ the Galois extension of $\QQ$ obtained by adjoining the coordinates of the $p$-torsion points on $E$ and let $G$ be the Galois group of $K/\QQ$. The Galois action on the $p$-torsion points $E[p]$ identifies $G$ with a subgroup of $\GL\bigl(E[p]\bigr) \cong \GL_2(\FF_p)$ via the representation $\rho\colon \Gal\bigl(\bar \QQ/\QQ\bigr)\to \GL\bigl(E[p]\bigr)$. A celebrated theorem of Serre~\cite{serregl2} shows that $G$ is equal to the full group $\GL_2(\FF_p)$ for all but finitely many primes $p$ when the curve is fixed.

 We are interested in the vanishing of the Galois cohomology group $H^1\bigl(G, E[p]\bigr)$; see~\cite{serre_coh_gal} or~\cite{cnf} for the basic definitions of Galois cohomology. This specific cohomology group appears as an obstruction in various contexts. For instance, Kolyvagin's work uses the vanishing of this group in the case $G$ is equal to $\GL_2(\FF_p)$ (see Proposition~9.1 in~\cite{gross}).  The following first theorem characterizes completely when this cohomology group does not vanish, answering a question at \cite{mo_question}.

 \begin{thm}\label{thm1}
   Fix a prime $p$. Let $E/\QQ$ be an elliptic curve, $K = \QQ(E[p])$, and $G$ the Galois group of $K/\QQ$. Then $H^1\bigl(G, E[p]\bigr)$ is trivial except in the following cases:
  \begin{itemize}
   \item $p=3$, there is a rational point of order $3$ on $E$, and there are no other isogenies of degree $3$ from $E$ that are defined over $\QQ$.
   \item $p=5$ and the quadratic twist of $E$ by $D=5$ has a rational point of order $5$, but no other isogenies of degree $5$ defined over $\QQ$.
   \item $p=11$ and $E$ is the curve labeled as 121c2 in Cremona's tables~\cite{cremona}, given by the global minimal equation
 $y^2 + x\,y = x^3 + x^2 - 3632\,x + 82757$.
  \end{itemize}
In each of these cases, $H^1\bigl(G, E[p]\bigr)$ has $p$ elements.
 \end{thm}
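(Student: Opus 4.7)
The plan is to use Dickson's classification of subgroups of $\GL_2(\FF_p)$ to reduce to the Borel case, carry out an explicit inflation-restriction computation there, and finally translate the resulting character condition into the list of curves above.

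Since $E[p]$ is killed by $p$, the group $H^1(G, E[p])$ vanishes unless $p \mid |G|$; I assume $p \mid |G|$ throughout. By Dickson's theorem, $G$ is either contained in the normalizer of a Cartan subgroup (whose order is coprime to $p$, so this case cannot arise), contains $\SL_2(\FF_p)$, has exceptional image $A_4$, $S_4$, or $A_5$ in $\PGL_2(\FF_p)$, or is contained in a Borel subgroup. If $G \supseteq \SL_2(\FF_p)$, Sah's lemma applied to $-I \in Z(\SL_2(\FF_p))$ --- which acts as $-1 \neq 1$ on $E[p]$ for $p$ odd --- gives $H^1(\SL_2(\FF_p), E[p]) = 0$, and inflation-restriction along the prime-to-$p$ quotient $G/\SL_2(\FF_p)$ then kills $H^1(G, E[p])$. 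The exceptional images only produce $p \mid |G|$ when $p \leq 5$, and a short case check shows that $G$ must then contain $\SL_2(\FF_p)$, reducing to the previous situation. Thus we are left with the Borel case.

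Suppose then that $G$ lies in a Borel with unipotent radical $U \cong \FF_p$. Since $p \mid |G|$, we have $U \subseteq G$, and $G/U$ embeds in the diagonal torus, hence has order coprime to $p$. Inflation-restriction collapses to
\[
H^1(G, E[p]) \cong H^1(U, E[p])^{G/U}.
\]
A direct calculation for the cyclic group $U$ (acting on $E[p]$ by a single transvection) gives $H^1(U, E[p]) \cong \FF_p$, and tracking the conjugation action shows that an element $\sm{a}{b}{0}{d}$ of $G$ acts on this line through $d^2/a = \chi_2^2 \chi_1^{-1}$, where $\chi_1$ (resp.\ $\chi_2$) is the character giving the action of $G$ on the unique $G$-invariant subgroup $C \subset E[p]$ (resp.\ on $E[p]/C$). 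The invariants are non-trivial exactly when $\chi_1 = \chi_2^2$; combined with $\chi_1 \chi_2 = \omega$, this is equivalent to $\chi_2^3 = \omega$. When this condition holds, $H^1(G, E[p]) \cong \FF_p$.

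It remains to translate the condition $\chi_2^3 = \omega$, together with the non-splitness of $E[p]$ as a $G$-module (which matches the ``no other isogenies'' clause), into the three stated cases. For $p = 3$, cubing is the identity on $\FF_3^*$, so $\chi_2 = \omega$ and $\chi_1 = 1$, i.e.\ $E$ has a rational point of order $3$. For $p = 5$, the equation $\chi_2^3 = \omega$ has unique solution $\chi_1 = \omega^2$, and $\omega^2$ is the quadratic character of $\QQ(\sqrt 5)$, so the quadratic twist $E^{(5)}$ has a rational point of order $5$. For $p \geq 7$, Mazur's theorem restricts $p$ to $\{7, 11, 13, 17, 19, 37, 43, 67, 163\}$ and the relevant curves to a known list (an infinite family for $p \in \{7, 13\}$, finitely many isogeny classes otherwise). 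The main obstacle is now the case-by-case verification that $\chi_2^3 = \omega$ can hold only for the single curve $121c2$ at $p = 11$: one uses the structure theorem that isogeny characters of $E/\QQ$ decompose as a power of $\omega$ times a finite-order character unramified away from the bad primes of $E$, together with the parameterizations given by $X_0(p)$, to rule out all other possibilities.
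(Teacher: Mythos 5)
Your overall strategy coincides with the one in the paper: reduce to the Borel case using the classification of subgroups of $\GL_2(\FF_p)$, compute $H^1$ for the Borel via inflation--restriction and the torus action on $H^1(U,E[p])$, and then translate the character condition into a list of curves. Your invocation of Sah's lemma with $-I \in \SL_2(\FF_p)$ plays the same role as the paper's ``homothety lemma''; both arguments use a central element acting without fixed points. The computation of the torus action by $d^2/a$ matches the paper's $u^{-1}v^2$, and the resulting condition $\chi_2^3 = \omega$ agrees with the paper's $\chi^3 = \omega$.

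That said, there are several gaps worth flagging. First, you never handle $p=2$: Sah's lemma with $-I$ is vacuous, and your formula $H^1(U,E[p]) \cong \FF_p$ actually fails at $p=2$ (the norm $1+h$ is not zero on $E[2]$, and $H^1(U,E[2])=0$). The conclusion is still right, but it needs a separate computation, which the paper does. Second, the handling of the exceptional subgroups is loose: the cleaner fact to quote is that a subgroup of $\GL_2(\FF_p)$ containing a transvection is either contained in a Borel or contains $\SL_2(\FF_p)$; your ``short case check'' is not obviously a proof, since e.g.\ for $p=3$ a priori $G$ could be \emph{contained} in $\SL_2(\FF_3)$ rather than containing it, and one then needs the surjectivity of the determinant (via the Weil pairing) to exclude this. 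Third, you miss a useful reduction: the equation $\chi_2^3 = \omega$ in $\widehat{\FF_p^\times}$ is solvable if and only if $\gcd(3,p-1)=1$, i.e.\ $p \not\equiv 1 \pmod 3$. This immediately rules out $p \in \{7,13,19,37,43,67,163\}$ from Mazur's list, leaving only $p=11$ and $p=17$ to check, rather than the much longer verification you suggest. Finally, the actual verification that $121$c$2$ is the unique curve at $p=11$ and that no curve works at $p=17$ is only sketched; the paper carries this out with explicit torsion-point and Frobenius-trace computations, which are an essential part of a complete proof.
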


 Partial results on this question have appeared in various sources. For instance, Lemma~10 in~\cite{coates} by Coates shows that $H^1\bigl(G, E[p]\bigr)$ vanishes when $E[p]$ is irreducible as a Galois module. Section~3 in~\cite{ciperiani_stix} also treats related questions.

 The above result extends to elliptic curves $E$ over more general number fields $F$ if we assume that $F\cap \QQ(\mu_p) = \QQ$, where $\QQ(\mu_p)$ is the field generated by $p$-th roots of unity. Rather than a single elliptic curve for $p>5$, one finds possibly infinitely many exceptions for $p=11$ and $p=17$, but only finitely many further exceptions for each $p>17$ and none for all $p$ such that $p\equiv 1 \pmod{3}$. See Theorem~\ref{thm1_general} for a precise statement.

 Next, we address the analogous question for $E[p^i]$ for $i>1$, but assuming that $p>3$.

 \begin{thm}\label{thmi}
  Fix a prime $p>3$. Let $E/\QQ$ be an elliptic curve, $K_i = \QQ(E[p^i])$ the extension of $\QQ$ obtained by adjoining the coordinates of all $p^i$-torsion points, and $G_i$ the Galois group of $K_i/\QQ$. Then $H^1\bigl(G_2,E[p^2]\bigr)$ is trivial if and only if $H^1\bigl(G_i,E[p^i]\bigr)$ is trivial for all $i\geq 2$. This vanishing holds if and only if $(E,p)$ is not among the following cases:
  \begin{itemize}
    \item $p=5$ or $p=7$ and $E$ contains a rational $p$-torsion point.
    \item $p=5$ and there is an isogeny $\varphi\colon E\to E'$ of degree $5$ defined over $\QQ$ and the quadratic twist by $D=5$ of $E$ contains a rational $5$-torsion point.
    \item $p=5$ and there is an isogeny $\varphi\colon E\to E'$ of degree $5$ defined over $\QQ$ but none of degree $25$ and the quadratic twist by $D=5$ of $E'$ contains a rational $5$-torsion point.
    \item $p=5$ and $E$ admits an isogeny $E\to E'\to E''$ of degree $25$ defined over $\QQ$ and $E'$ contains a rational $5$-torsion point.
    \item $p=11$ and $E$ is 121c1 or 121c2.
  \end{itemize}
 \end{thm}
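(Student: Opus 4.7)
The strategy is to reduce to Theorem~\ref{thm1} via the natural filtration of $E[p^i]$. For each $j \geq i+1$, the short exact sequence of Galois modules
\begin{equation*}
0 \to E[p] \to E[p^{i+1}] \xrightarrow{\,\cdot p\,} E[p^i] \to 0
\end{equation*}
yields a long exact sequence in $G_j$-cohomology, which combined with the inflation--restriction sequence for the surjections $G_j \twoheadrightarrow G_i$ provides an inductive mechanism. The hypothesis $p>3$ is essential: it ensures that each kernel $N_i = \ker(G_{i+1} \to G_i)$ is an elementary abelian $p$-group embedded in the additive group $\Mat_2(\FF_p) = \operatorname{End}(E[p])$, viewed as the adjoint representation of $\GL_2(\FF_p)$, with the conjugation action by $G_{i+1}$ factoring through $G_1 \subseteq \GL_2(\FF_p)$.

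To establish the equivalence that $H^1\bigl(G_2, E[p^2]\bigr)$ vanishes iff $H^1\bigl(G_i, E[p^i]\bigr)$ vanishes for all $i \geq 2$, I would induct on $i$. Assuming $H^1\bigl(G_i, E[p^i]\bigr) = 0$, the inflation--restriction sequence
\begin{equation*}
0 \to H^1\bigl(G_i, E[p^i]\bigr) \to H^1\bigl(G_{i+1}, E[p^i]\bigr) \to H^1\bigl(N_i, E[p^i]\bigr)^{G_i},
\end{equation*}
together with a parallel analysis for $E[p]$-coefficients, reduces the vanishing at level $i+1$ to vanishing of certain $G_1$-equivariant Hom groups between the adjoint module and $E[p]$. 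For $p > 3$ the adjoint representation decomposes as scalars plus a trace-zero piece, and Schur-type arguments preclude nonzero equivariant maps into the standard module $E[p]$ for typical images $G_1$. The converse direction of the iff is immediate by specializing to $i = 2$.

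To identify the exceptional $(E,p)$, the natural starting point is the list from Theorem~\ref{thm1}: only cases where the mod-$p$ cohomology is already nonzero, or where $N_1$ produces new classes at the $p^2$-level, can fail to vanish. This narrows the search to Borel-image cases (i.e.\ $E$ admits a rational $p$-isogeny) and the special $p = 11$ situation. For Borel images, I write $E[p]$ as an extension $0 \to \chi_1 \to E[p] \to \chi_2 \to 0$ of characters; the four subcases at $p = 5$ and the new $p = 7$ case then correspond to distinguishing the roles of $E$, its $p$-isogenous $E'$, its quadratic twist by $D = 5$, and whether the chain of isogenies extends to degree $p^2$. At $p = 11$, I expect that $121c1$ must be added to $121c2$ because the two are linked by an $11$-isogeny, so even though only $121c2$ appeared in Theorem~\ref{thm1}, the dual side of the isogeny produces a non-vanishing class at the $p^2$-level for $121c1$.

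The main obstacle will be the case-by-case analysis of the Borel-image situations for $p = 5$ and $p = 7$: one must carefully track which submodules of $E[p^2]$, and the analogous structures on isogenous curves or quadratic twists, are fixed by which subgroup of $G_2$, and verify that exactly the listed cases give a nonzero class. In particular, the appearance of a new $p = 7$ exception not present in Theorem~\ref{thm1} must be pinned down as coming from a genuine contribution of the adjoint representation via $N_1$, rather than from lifting a mod-$p$ class. Once these cases are resolved, the inductive step in $i$ should go through uniformly via the adjoint-representation machinery available for $p > 3$.
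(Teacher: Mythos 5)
Your high-level framework---the multiplication-by-$p$ short exact sequence combined with inflation--restriction for $G_{i+1} \twoheadrightarrow G_i$, and the reduction to computing $G$-equivariant homomorphisms from the adjoint module into the standard module $E[p]$---is indeed the skeleton of the paper's proof. But two essential ingredients are missing, and one claim as stated is wrong.

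First, the assertion that ``Schur-type arguments preclude nonzero equivariant maps into the standard module $E[p]$ for typical images $G_1$'' cannot be made precise in the way you need. The adjoint representation restricted to a Borel subgroup is highly reducible, and the equivariant Hom groups $\Hom_G\bigl(\Mat_2(\FF_p), E[p]\bigr)$ are \emph{not} generically zero in the Borel situation. The paper's Proposition~\ref{hom_prop} carries out an explicit, case-by-case enumeration (via systems of linear equations in the images of the four elementary matrices under a putative $f$) and finds five families of $G$ where this Hom group is nonzero; these are precisely the new exceptional cases that appear at level $p^2$ but not at level $p$ (for instance $G = \sm{1}{0}{0}{*}$, giving a rational $p$-torsion point, or $G = \bigl\{\sm{u}{*}{0}{u^2}\bigr\}$, giving the curve 121c1). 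There is no soft representation-theoretic shortcut: these cases must be tracked one by one. The $p=7$ exception in the theorem does \emph{not} arise from a mysterious adjoint contribution; it is simply the rational $7$-torsion point case, via $\Hom_G(M, E[p]) \neq 0$ for $G = \sm{1}{*}{0}{*}$.

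Second, and more fundamentally, your proposal never pins down what the kernel $N_i = \ker(G_{i+1} \to G_i)$ actually is inside $\Mat_2(\FF_p)$. You correctly note it embeds there, but the cohomology $H^1(N_i, E[p])^{G_i}$ depends entirely on which subgroup it is, and a priori it could be anything from trivial to the full matrix algebra. The paper resolves this by invoking a deep arithmetic input---Greenberg's theorem (the paper's Theorem~\ref{greenberg_thm})---which shows that for $p > 3$ and $E/\QQ$ admitting a rational $p$-isogeny, the group $G_i$ is ``greatest possible'' (i.e.\ $N_i = \Mat_2(\FF_p)$ exactly) \emph{except} in two cases: $p=7$ with $E$ a quadratic twist of a conductor-$49$ curve, and $p=5$ with a rational cyclic $25$-isogeny. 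These two cases must be handled separately: the conductor-$49$ case is dispatched by showing $G$ contains a nontrivial homothety (so the cohomology vanishes for trivial reasons by Lemma~\ref{homothety_lem}), and the $25$-isogeny case requires rerunning the entire Hom computation with $M_2$ replaced by the upper-triangular matrices (Lemma~\ref{25_lem}), which is why bullets three and four of the theorem distinguish between ``no degree-$25$ isogeny'' and ``there is a chain $E \to E' \to E''$.'' Without this structural control on $N_i$, the inductive mechanism you describe simply cannot be set in motion.
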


 Again, we will also obtain some results that are valid over more general base fields $F$ with $F\cap\QQ(\mu_p) = \QQ$, and some that are valid for $p = 3$. See Section~\ref{results_i_sec}.

 This more general question has also been investigated before, and Cha has obtained results in this direction in~\cite{cha}. He proved the vanishing of $H^1\bigl(G_i,E[p^i]\bigr)$ when $p>3$, the curve has semi-stable reduction at an unramified place above $p$, and $E$ does not have a rational $p$-torsion point. He also describes when this cohomology group vanishes for $p=3$ under his assumptions. The method of proof is similar.

 The results in Theorem~\ref{thmi} can be applied to the Grunwald--Wang problem for elliptic curves as formulated by Dvornicich and Zannier in~\cite{dvornicich_zannier}. In Proposition~\ref{div9_prop}, we give an example of an elliptic curve $E/\QQ$ with a point $P\in E(\QQ)$ divisible by $m=9$ in $E(\QQ_\ell)$ for almost all primes $\ell$ but not divisible by $9$ in $E(\QQ)$. Previously, the only known examples~\cite{dvornicich_zannier_4} were with $m=4$. In Theorem~\ref{div_thm}, we also give a simplified proof of the result in~\cite{paladino_5} that it is impossible to find such a point $P$ when $m=p^2$ and $p>3$.

 The paper is structured as follows. We begin with some background in Section~\ref{sec:prel-notat}, both establishing notation and reducing to cases where the Galois group $G$ does not contain a nontrivial homothety. In Section~\ref{sec:main-theorem} we prove a general form of Theorem~\ref{thm1}. Section~\ref{sec:second-cohom} establishes a vanishing result for $H^2$. In Section~\ref{sec:appl-bsd} we give an application to verifying cases of the Birch and Swinnerton-Dyer conjecture, correcting an oversight in \cite{steinetal}. Our main results classifying the vanishing of $H^1(G_i, E[p^i])$ are then discussed in Section~\ref{results_i_sec}, and some supplementary numerical computations for $H^1(G_2, E[p^2])$ are included in Section~\ref{sec:numer-comp}. Finally, in Section~\ref{sec:divisibility} we give the application to the Grunwald--Wang problem for elliptic curves.

\subsection*{Acknowledgments}
 It is our pleasure to thank Jean Gillibert and John Coates for interesting comments and suggestions. We are also grateful to Brendan Creutz for pointing us to~\cite{creutz}.

 \section{Preliminaries and notation}\label{sec:prel-notat}
 Throughout this paper $E$ will be an elliptic curve defined over a number field $F$ and $p$ will be a prime number. We will denote by $K=F\bigl(E[p]\bigr)$ the number field obtained by adjoining the coordinates of the $p$-torsion points to $F$. Let $G$ be the Galois group of $K/F$. More generally, for $i\geq 1$ we let $K_i = F\bigl(E[p^i]\bigr)$ and $G_i=\Gal(K_i/F)$. The faithful actions of $G_i$ on $E[p^i]$ give embeddings $G_i \hookrightarrow \Aut(E[p^i]) \cong \GL_2(\ZZ/p^i)$, and so we may regard them as subgroups.

 We will also use the groups $H_i = \Gal(K_{i+1}/K_i)$ and $M_i = \Gal(K_i/K)$. We note that, as $H_i$ is the kernel of the map $G_{i+1} \to G_i$, it is identified with a subgroup of
\[
\ker\Bigl(\GL_2(\ZZ/p^{i+1}) \to \GL_2(\ZZ/p^i)\Bigr) \cong \Mat_2(\FF_p),
\]
where the conjugation action of $G_{i+1} \subset \GL_2(\ZZ/p^{i+1})$ is by the adjoint representation. Therefore, all elements in $H_i$ have order $p$ and commute with the elements of $M_{i+1}$ inside $G_{i+1}$.

 In summary, we have the following situation:
 \begin{equation*}
  \xymatrix{
  & K_{i+1} \ar@{-}[dl]_{H_i} \ar@{-}@/^/[dddd]^{G_{i+1}} \\
  K_i \ar@{-}[dd]_{M_i} \ar@{-}@/^/[dddr]^{G_i}& \\
  & \\
  K=K_1 \ar@{-}[dr]_{G=G_1}& \\
  & F
  }
 \end{equation*}
 We will later use the inflation-restriction sequence
 \begin{equation}\label{infres_seq}
  \xymatrix@1{
    0\ar[r] & H^1\bigl( G_i, E[p^j]\bigr) \ar[r]^-{\inf} &
              H^1\bigl( G_{i+1}, E[p^j]\bigr) \ar[r]^-{\res} &
              H^1\bigl( H_i, E[p^j]\bigr)^{G_i} \ar[r] & H^2\bigl(G_i, E[p^j] \bigr)
  }
 \end{equation}
 which is valid for all $1\leq j\leq i$. In inductive arguments, we will also use that the short exact sequence
\[
\xymatrix@1{0\ar[r]&E[p]\ar[r]&E[p^j]\ar[r]& E[p^{j-1}]\ar[r]&0}
\]
gives a long exact sequence
  \begin{equation}\label{mult_p_seq}
  \xymatrix@1{E(F)[p^{j-1}]\ar[r]& H^1\bigl(G_i, E[p]\bigr)\ar[r] & H^1\bigl(G_i,E[p^j]\bigr)\ar[r]& H^1\bigl(G_i, E[p^{j-1}]\bigr).}
  \end{equation}

 As mentioned in the introduction, these cohomology groups only start to be interesting when $E[p]$ is reducible. The following argument for this is given in~\cite{cha} as Theorem~7.
 \begin{lem}\label{homothety_lem}
   If $G$ contains a non-trivial homothety, then $H^1\bigl(G_i, E[p^i]\bigr)=0$.
 \end{lem}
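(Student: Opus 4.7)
Suppose $cI \in G$ is a non-trivial homothety, so $c \in \FF_p^*$ with $c \neq 1$, and let $d$ be the order of $c$ in $\FF_p^*$. Then $d \mid (p-1)$, so $d$ is coprime to $p$. The plan is to lift $cI$ to a central element of $G_i$ that still acts on $E[p^i]$ by a non-trivial scalar, and then to invoke the standard fact that central elements act trivially on the cohomology of the group.

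First I would construct the lift via Schur--Zassenhaus. The preimage of $\langle cI\rangle \leq G$ in $G_i$ fits in an extension
\[
 1 \longrightarrow M_i \longrightarrow \tilde H \longrightarrow \langle cI\rangle \longrightarrow 1,
\]
where $M_i$ lies in the kernel of $\GL_2(\ZZ/p^i) \to \GL_2(\FF_p)$ and is therefore a $p$-group. As $\gcd(d,p) = 1$, Schur--Zassenhaus furnishes an element $\sigma \in G_i$ of order $d$ whose image in $G$ is $cI$.

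Next I would show that the matrix of $\sigma$ acting on $E[p^i]$ is a genuine scalar $\tilde c\,I \in \GL_2(\ZZ/p^i)$. Writing this matrix as $cI + pB$, where $c$ is now taken to be the unique $d$-th root of unity in $(\ZZ/p^i)^*$ reducing to $c \in \FF_p^*$, expansion of $(cI + pB)^d = I$ gives $I + d c^{d-1} p B \equiv I \pmod{p^2}$, so $B \equiv 0 \pmod p$ because $d c^{d-1}$ is a unit. Iterating this Hensel-type step promotes the congruence one level at a time until the matrix of $\sigma$ equals $\tilde c\,I$ exactly, with $\tilde c \equiv c \pmod p$ and $\tilde c - 1$ a unit in $\ZZ/p^i$. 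Since $\tilde c\,I$ is central in $\GL_2(\ZZ/p^i)$ and $G_i$ embeds faithfully, $\sigma$ is central in $G_i$.

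Finally, any central element of $G_i$ acts trivially on $H^n(G_i, E[p^i])$ via its action on the coefficient module. Concretely, applying the $1$-cocycle identity to the two expressions $f(\sigma h)$ and $f(h\sigma)$ gives $(\tilde c - 1)\,f(h) = (h-1)\,f(\sigma)$ for every cocycle $f$, which exhibits $(\tilde c - 1)\cdot f$ as the coboundary of $f(\sigma)$. Hence multiplication by $\tilde c - 1$ annihilates $H^1(G_i, E[p^i])$; since $\tilde c - 1$ is a unit in $\ZZ/p^i$, we obtain $H^1(G_i, E[p^i]) = 0$. The main obstacle in the plan is the Hensel step upgrading the mod-$p$ scalar to an exact scalar; the remaining ingredients (Schur--Zassenhaus and triviality of the central action on cohomology) are entirely standard.
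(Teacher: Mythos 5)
Your proof is correct, and it takes a genuinely different route from the paper's. The paper argues by induction on $i$: it first treats $i=1$ via inflation--restriction along the central subgroup $\langle g \rangle$, then in the inductive step restricts both $H^1\bigl(G_{i+1}, E[p^i]\bigr)$ and $H^1\bigl(G_{i+1}, E[p]\bigr)$ to $\Hom\bigl(H_i, E[p]\bigr)^G$, observes this vanishes because $g$ fixes $H_i$ under conjugation but acts non-trivially on $E[p]$, and finally uses the multiplication-by-$p$ long exact sequence to conclude for $E[p^{i+1}]$. You instead isolate and prove the cleaner structural fact that a non-trivial homothety in $G$ lifts to a genuine non-trivial scalar $\tilde c\, I$ in $G_i$ (Schur--Zassenhaus to get an element of order prime to $p$, then the Hensel step to upgrade the mod-$p$ scalar to an exact one), and then apply the standard Sah-type argument: a central element of $G_i$ acting by the unit $\tilde c$ forces $\tilde c - 1$ to annihilate $H^1$, so the group vanishes. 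All the details check out, including the existence and uniqueness of the Teichm\"uller lift of $c$ to a $d$-th root of unity in $(\ZZ/p^i)^\times$, and the invertibility of $\tilde c - 1$ since $c \neq 1$ in $\FF_p^\times$. Your argument is shorter and avoids the induction; it also yields the vanishing of $H^n\bigl(G_i, E[p^j]\bigr)$ for all $n \geq 1$ and $1 \leq j \leq i$ at once, which the paper's inductive scheme does not directly state (though Lemma~\ref{h2_lem} later needs the same kind of argument for $H^2$ in the full-image case). The one thing the paper's version buys that yours does not is a template: the two-cohomology-groups inductive bookkeeping is reused later in Section~\ref{results_i_sec}, so the authors likely chose it for uniformity rather than brevity.
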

 \begin{proof}
  Let $g$ be a non-trivial homothety. Since $g$ is central, $\langle g\rangle$ is a normal subgroup in $G$. Consider the inflation-restriction sequence
  \begin{equation*}
   \xymatrix@1{ 0\ar[r] & H^1\bigl( G/\langle g\rangle, E[p]^{g=1} \bigr) \ar[r] &
                          H^1\bigl( G, E[p]\bigr) \ar[r] &
                          H^1\bigl( \langle g \rangle, E[p])       }
  \end{equation*}
  The homothety $g$ cannot have fixed points in $E[p]$; in particular $E(F)[p]=0$.
  The left-hand side cohomology group in the above sequence is therefore trivial. The right-hand side is also trivial because $\langle g\rangle $ is of order coprime to $p$.

  We assume by induction that $H^1\bigl(G_{i}, E[p^{i}]\bigr)$ and $H^1\bigl(G_{i}, E[p]\bigr)$ are both trivial. By assumption, the restriction maps
  \begin{equation*}
    \xymatrix@R-4ex{
       H^1\bigl(G_{i+1}, E[p^i]\bigr) \ar[r] & H^1\bigl(H_{i}, E[p^i]\bigr)^{G_i} \cong \Hom\bigl(H_{i}, E[p^i]\bigr)^{G_i} \\
      H^1\bigl(G_{i+1}, E[p]\bigr) \ar[r] & H^1\bigl(H_{i}, E[p]\bigr)^{G_i} \cong \Hom\bigl(H_{i}, E[p]\bigr)^{G_i}
     }
  \end{equation*}
  from~\eqref{infres_seq} are both injective. Note that the target groups are actually equal because all elements in $H_i$ have order $p$. Since $M_{i+1}$ and $H_i$ commute, the action of $G_i$ on $H_i$ factors through $G$, so the target in both cases is $\Hom\bigl(H_i,E[p]\bigr)^{G}$.

 The homothety $g$ acts trivially on $H_i$ and non-trivially on any non-zero point in $E[p]$. Therefore, there are no homomorphisms from $H_i$ to $E[p]$ which are fixed by $g$. It follows that $H^1\bigl(G_{i+1}, E[p^i]\bigr) $ and $H^1\bigl(G_{i+1}, E[p]\bigr) $ are both trivial. The exact sequence~\eqref{mult_p_seq} now implies that $H^1\bigl(G_{i+1}, E[p^{i+1}]\bigr)$ is also trivial.
 \end{proof}

 \begin{lem}\label{homothety_borel_lem}
   Suppose $p>2$. Assume that $G$ does not contain a non-trivial homothety and $F \cap \QQ(\mu_p) = \QQ$. Then $G$ is contained in a Borel subgroup.
 \end{lem}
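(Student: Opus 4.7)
By the Weil pairing, the determinant $\det\colon G\to\FF_p^*$ coincides with the mod-$p$ cyclotomic character restricted to $G$; since $[F(\mu_p):F]=p-1$ by the assumption $F\cap\QQ(\mu_p)=\QQ$, this determinant is surjective. Let $Z\subset\GL_2(\FF_p)$ denote the subgroup of scalar matrices; the no-homothety hypothesis is $G\cap Z=\{1\}$, so the projection $G\to\bar G\subseteq\PGL_2(\FF_p)$ is an isomorphism.

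I argue by contradiction: suppose $G$ is not contained in any Borel. By the Dickson classification of subgroups of $\GL_2(\FF_p)$, one of the following holds up to conjugation: (i)~$G\supseteq\SL_2(\FF_p)$; (ii)~$G$ lies in the normalizer $N(C)$ of a split or non-split Cartan subgroup $C$; or (iii)~$\bar G$ is isomorphic to $A_4$, $S_4$, or $A_5$. In case~(i), surjectivity of $\det$ forces $G=\GL_2(\FF_p)\supseteq Z$, a contradiction. In case~(iii), $\det$ factors through the abelianization of $\bar G\cong G$, whose order is $3$, $2$, or $1$; surjectivity of $\det$ thus requires $p\leq 4$, and for $p=3$ a direct check in $\GL_2(\FF_3)$ (order $48$, with unique index-$2$ subgroup $\SL_2(\FF_3)$, which contains $-I$) rules out all three subcases: $A_5$ is too large, $A_4$ has no index-$2$ subgroup, and an $S_4$ subgroup would force $G=\SL_2(\FF_3)$.

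For case~(ii), set $G_0=G\cap C$. If $G=G_0$: the split case puts $G$ in a Borel, contradicting the assumption; in the non-split case $G_0$ is cyclic in $\FF_{p^2}^*$ with surjective norm, and the cyclic-group identity $|G_0|=(p-1)\gcd(|G_0|,p+1)$ gives $(p-1)\mid|G_0|$, so $G_0\supseteq\FF_p^*=Z$, again a contradiction. Otherwise $[G:G_0]=2$, and any $g\in G\setminus G_0$ satisfies $g^2\in Z\cap G=\{1\}$, so $g$ is an involution with $\det g=-1$. Conjugation by $g$ on $G_0$ is coordinate swap in the split case and Frobenius in the non-split case; in both cases its fixed set is $G_0\cap Z=\{1\}$, so this involution acts freely on $G_0\setminus\{1\}$ and $|G_0|$ is odd. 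In the split case, the injection $G_0\hookrightarrow C_s/Z\cong\FF_p^*$ given by $(x,y)\mapsto x/y$ intertwines swap with inversion, so a generator of $G_0$ must lie in the anti-diagonal $\{(a,a^{-1})\}$; in the non-split case, $\gcd(|G_0|,p-1)=1$ together with $|G_0|\mid p^2-1$ gives $|G_0|\mid p+1$, forcing $G_0\subseteq\ker N$. Either way $\det(G_0)=\{1\}$, so $\det(G)\subseteq\{\pm 1\}$, forcing $p=3$; but then $|G|=2$ and its generator — an involution of determinant $-1$ — has eigenvalues $\pm 1$ and fixes a line, so $G$ lies in a Borel after all, the final contradiction.

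The principal obstacle is case~(ii): one must assemble the free-involution parity, the cyclic structure of $G_0$, and the surjectivity of $\det$ to collapse the problem onto $p=3$, where direct inspection closes the argument.
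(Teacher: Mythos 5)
Your proof is correct. It uses the same backbone as the paper --- surjectivity of $\det$ via the Weil pairing together with the Dickson classification of subgroups of $\GL_2(\FF_p)$ --- but the execution differs in several places, and on balance yours is more uniform while the paper's is more economical. In the exceptional case the paper allows the projective image to be a proper subgroup of $A_4$, $S_4$, $A_5$ and then deals with the $p=5$ straggler (a cyclic group of order $4$, which it diagonalizes); you instead take the projective image to be one of the three exceptional groups on the nose (the proper subgroups being absorbed into the Cartan case) and invoke the abelianizations $\ZZ/3$, $\ZZ/2$, $1$, which kills everything at once except a short $p=3$ inspection. In the Cartan-normalizer case the paper argues in one line from the fact that anti-diagonal squares are homotheties (split) and that a dihedral group of order $2(p+1)$ has no surjection onto $\FF_p^\times$ for $p>3$ (non-split); you develop the structure of $G_0=G\cap C$ from scratch --- the involution acting freely forces $|G_0|$ odd, then the cyclic-subgroup arithmetic pins $G_0$ into the determinant-one part of $C$ --- which is longer but gives a very explicit picture of what such a $G$ would have to look like before the parity/determinant contradiction lands. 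You also handle $p=3$ inside the same case analysis (ending with the observation that an involution of determinant $-1$ has a $+1$-eigenline), whereas the paper treats $p=3$ separately by identifying $\PGL_2(\FF_3)$ with $S_4$ and using non-transitivity of $G$ on $\PP^1(\FF_3)$. Two small expository suggestions: the claim that a generator of $G_0$ ``must lie in the anti-diagonal'' follows from the conditions $n\mid(k-1)(k+1)$ and $\gcd(n,k-1)=1$ (writing $G_0=\langle(x,x^k)\rangle$ with $n=|G_0|$), and spelling this out would make that step self-contained; likewise ``the cyclic-group identity'' is really just the orbit–stabilizer identity for the restriction of the norm to $G_0$ under your surjectivity hypothesis.
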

 \begin{proof}
   By the Weil pairing, the determinant of $\rho$ is the Teichm\"uller character $\omega$ describing the action of Galois on the $p$-th roots of unity $\mu_p$. The assumption $F \cap \QQ(\mu_p) = \QQ$ implies that $\det \colon G\to \FF_p^{\times}$ must be surjective.

   Assume first that $p>3$. We fix a basis of $E[p]$ and view $G$ as a subgroup of $\GL_2(\FF_p)$. By the classification of maximal subgroups of $\GL_2(\FF_p)$, we have to show that the following cases can not occur: $G$ is a subgroup of the normalizer of a split Cartan group, $G$ is a subgroup of the normalizer of a non-split Cartan group, or $G$ maps to an exceptional group $A_4$, $A_5$ or $S_4$ in $\PGL_2(\FF_p)$.

   Suppose $G$ is a subgroup of the group of diagonal and anti-diagonal matrices, which is the normalizer of a split Cartan subgroup. Suppose moreover that $G$ is not a subgroup of the diagonal matrices. The square of $\sm{0}{b}{c}{0}\in G$ is the homothety by $bc$. Therefore, all anti-diagonal elements in $G$ must be of the form $\sm{0}{c^{-1}}{c}{0}$. Multiplying this with a diagonal element $\sm{u}{0}{0}{v}$ in $G$ then shows that all diagonal elements must have determinant $1$. Hence the determinant would not be surjective for $p>3$.

   Next, suppose that $G$ is a subgroup of the normalizer of a non-split Cartan group. Since $G$ contains no non-trivial homothety, the image of $G$ in $\PGL_2(\FF_p)$ is isomorphic to $G$. In other words, $G$ must be a subgroup of a dihedral group of order $2(p+1)$. No such group could have a surjective map onto $\FF_p^{\times}$ if $p>3$.

   Finally, assume that $G$ is exceptional. As before, our hypothesis implies that $G$ is isomorphic to a subgroup of $A_4$, $A_5$ or $S_4$. However the only case in which we could have a surjective map onto $\FF_p^{\times}$ with $p>3$ is when $p=5$ and $G$ is a cyclic group of order $4$ in $S_4$. However, as $\FF_5$ contains the fourth roots of unity $\mu_4$, all such subgroups are diagonalizable in $\GL_2(\FF_5)$.

   We now return to the case $p=3$. By assumption, $G$ is isomorphic to its image in $\PGL_2(\FF_p)$, which is the full symmetric group on the four elements $\PP^1(\FF_3)$. Since the determinant is surjective, the image of $G$ cannot be contained in the alternating group. Therefore it is not transitive on $\PP^1(\FF_3)$, and $G$ is contained in a Borel subgroup.
\end{proof}

 From now on we will suppose that $\varphi\colon E\to E'$ is an isogeny of degree $p$ defined over $F$, and write $E[\varphi]$ for its kernel. The dual isogeny is denoted by $\hat\varphi\colon E'\to E$. We will now also fix a basis of $E[p]$ with the property that the first point belongs to $E[\varphi]$. In this basis, the Galois representation $\rho \colon \Gal\bigl(\bar{F}/F\bigr) \to \GL\bigl(E[p]\bigr) \cong \GL_2(\FF_p)$ now takes values in the Borel subgroup of upper triangular matrices. We will write $\chi\colon \Gal\bigl(\bar F/F\bigr)\to \FF_p^{\times}$ for the character of the Galois group on $E'[\hat\varphi]$. Then the character on $E[\varphi]$ is $\omega\chi^{-1}$, where $\omega$ is the Teichm\"uller character introduced above. The representation now is of the form $\rho =\sm{\omega\chi^{-1}}{*}{0}{\chi}$.

 \begin{cor}\label{one_isogeny_cor}
  Suppose $p>2$. If $F \cap \QQ(\mu_p) = \QQ$ and the group $H^1\bigl(G, E[p]\bigr)$ is non-trivial, then $E$ admits exactly one isogeny $\varphi\colon E \to E'$ of degree $p$ that is defined over $F$.
 \end{cor}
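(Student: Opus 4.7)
The plan is to use the two lemmas just established to get existence of at least one isogeny, and then rule out the possibility of two by a simple order argument.

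First I would apply Lemma~\ref{homothety_lem} to the hypothesis $H^1(G,E[p])\ne 0$: since $G_1=G$ and $E[p]=E[p^1]$, that lemma says $G$ contains no non-trivial homothety. Combined with the assumption $F\cap\QQ(\mu_p)=\QQ$, Lemma~\ref{homothety_borel_lem} then puts $G$ inside a Borel subgroup of $\GL(E[p])$. A Borel fixes a line in $E[p]$; the corresponding $\Gal(\bar F/F)$-stable subgroup of order $p$ is the kernel of an isogeny $\varphi\colon E\to E'$ of degree $p$ defined over $F$. This settles existence.

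For uniqueness, suppose for contradiction that $E$ admits two distinct isogenies of degree $p$ over $F$. Then $E[p]$ contains two distinct $G$-stable lines, so it decomposes as a direct sum $E[p]=L_1\oplus L_2$ of $G$-stable lines. In a basis adapted to this decomposition, $G$ lies inside the split Cartan (diagonal) subgroup of $\GL_2(\FF_p)$. In particular $G$ is an abelian subgroup of $(\FF_p^\times)^2$, so $|G|$ divides $(p-1)^2$ and is coprime to $p$.

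Since $E[p]$ is annihilated by $p$ and $|G|$ is coprime to $p$, the standard fact that $|G|$ annihilates $H^n(G,M)$ for $n\geq 1$ forces $H^1(G,E[p])=0$, contradicting the hypothesis. Hence at most one line is $G$-stable, so there is exactly one isogeny of degree $p$ defined over $F$. The only mildly subtle point is making sure the diagonalisable case is not already excluded by Lemma~\ref{homothety_borel_lem} (it is not, as the conclusion there is only that $G$ lies in \emph{a} Borel), so the coprime-order argument is genuinely needed.
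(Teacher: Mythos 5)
Your argument is correct and essentially identical to the paper's: both derive the absence of a non-trivial homothety from Lemma~\ref{homothety_lem}, place $G$ in a Borel via Lemma~\ref{homothety_borel_lem} to get existence, and rule out a second $G$-stable line by observing that a diagonalizable $G$ has order coprime to $p$, hence trivial $H^1$. Your closing remark that the diagonal case is not already excluded by Lemma~\ref{homothety_borel_lem} is a helpful clarification, but the substance of the proof matches the paper exactly.
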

 \begin{proof}
  By Lemma~\ref{homothety_lem}, we know that there is no non-trivial homothety in $G$. Then Lemma~\ref{homothety_borel_lem} implies that $G$ is contained in a Borel subgroup. Hence there is a subgroup of order $p$ in $E[p]$ fixed by the Galois group. If there were a second subgroup of order $p$ fixed by the Galois group, then in a suitable basis of $E[p]$ the group $G$ would consist of diagonal matrices. It would follow that $G$ has order coprime to $p$ and therefore that the cohomology group is trivial. Therefore, there is a unique isogeny defined over $F$ of degree $p$.
 \end{proof}

 \section{Proof of Theorem~\ref{thm1}}\label{sec:main-theorem}

  We begin by assuming that $E$ is defined over a number field $F$ such that $F\cap \QQ(\mu_p) = \QQ$.

  \begin{lem}
    The cohomology group $H^1(G; E[2])$ always vanishes.
  \end{lem}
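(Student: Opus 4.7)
The plan is to classify $G$ by its possibilities as a subgroup of $\GL(E[2]) = \GL_2(\FF_2) \cong S_3$ and handle each case. Since $|S_3|=6$, the only possible orders for $G$ are $1$, $2$, $3$, and $6$, and the arguments differ according to whether $|G|$ is odd or even.

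If $|G|$ is odd (i.e.\ $G$ is trivial or cyclic of order $3$), then $H^1\bigl(G,E[2]\bigr)$ is killed by $|G|$, but as $E[2]$ is $2$-torsion so is this cohomology, so it is simultaneously killed by coprime integers and must vanish. For $|G|=6$, I would apply the inflation-restriction sequence attached to the normal subgroup $A_3\trianglelefteq S_3$:
\[
 0 \longrightarrow H^1\bigl(S_3/A_3,\, E[2]^{A_3}\bigr) \longrightarrow H^1\bigl(S_3,E[2]\bigr) \longrightarrow H^1\bigl(A_3,E[2]\bigr).
\]
The right-hand term vanishes by the odd-order case. Because $A_3$ acts transitively on the three non-zero points of $E[2]$, we have $E[2]^{A_3}=0$, so the left-hand term vanishes as well, forcing $H^1\bigl(S_3,E[2]\bigr)=0$.

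The one remaining case, $|G|=2$, is the only one requiring a genuine calculation and is where I expect the main (very mild) obstacle to lie. Here $G=\langle\sigma\rangle$ acts on $E[2]$ as a transposition in $S_3$: it fixes one non-zero point $P$ and swaps the other two. Choosing the basis $\{P,Q\}$ of $E[2]$ with $\sigma(P)=P$ and $\sigma(Q)=P+Q$, the standard formula for cyclic cohomology gives
\[
 H^1\bigl(G,E[2]\bigr) \;=\; \ker(1+\sigma)\bigm/\im(1-\sigma).
\]
A one-line computation shows that both $\ker(1+\sigma)$ and $\im(1-\sigma)$ equal $\FF_2\cdot P$, whence the quotient is trivial.

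Putting the three cases together covers every subgroup of $\GL_2(\FF_2)$ and establishes the lemma. No hypothesis on $F$ is actually needed, which is consistent with the fact that the assumption $F\cap\QQ(\mu_p)=\QQ$ is automatic for $p=2$.
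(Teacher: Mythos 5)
Your proof is correct and follows essentially the same route as the paper: the order-$2$ case is handled by the explicit cyclic cohomology computation $\ker(1+\sigma)/\im(1-\sigma)=0$, odd-order subgroups by coprimality, and $S_3$ by inflation-restriction with respect to the normal order-$3$ subgroup (using that $A_3$ has no nonzero fixed points in $E[2]$). The paper organizes these observations through a single inflation-restriction sequence attached to $H = G \cap A_3$ rather than an explicit case split by $|G|$, but the ingredients and the logical content are identical.
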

  \begin{proof}
  The group $\GL_2(\FF_2)$ is isomorphic to the symmetric group on $3$ letters. For any cyclic subgroup of $\GL_2(\FF_2)$ of order $2$ generated by $h$, we may compute $H^1\bigl(\langle h\rangle,E[2]\bigr)$ as the quotient of the kernel of the norm $N_G = 1+h$ on $E[2]$ modulo the image of $h-1$. Because $p=2$, this group is trivial.

  For a general subgroup $G \leq \GL_2(\FF_2)$, let $H$ be the intersection of $G$ with the normal subgroup of order $3$. We have $H^1\bigl(H, E[2]\bigr)=0$ because the order of $H$ is coprime to $2$. We also have $H^1\bigl(G/H, E[2]^H\bigr)=0$ because $H$ is either of order $3$ and only fixes $0$ in $E[2]$, or $H$ is trivial and this group is $H^1\bigl(\langle h\rangle, E[2]\bigr) =0$. By the inflation-restriction sequence, we conclude that $H^1\bigl(G,E[2]\bigr)=0$.
  \end{proof}

\begin{lem}
\label{normalizer_action_lem}
  Let $H < \GL_2(\FF_p)$ be the subgroup generated by $h = \sm{1}{1}{0}{1}$. We have an isomorphism
  \begin{equation*}
    H^1\bigl(H,E[p]\bigr) \cong \FF_p,
  \end{equation*}
  and the action of an element $g = \sm{u}{w}{0}{v}$ in the normalizer $N(H)$ of $H$ on this cohomology group is multiplication by $u^{-1} v^2$.
\end{lem}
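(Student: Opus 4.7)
My plan is to leverage the fact that $H$ is cyclic of prime order, so the cohomology reduces to elementary linear algebra; then I compute the normalizer action on the (one-dimensional) result by hand at the cocycle level. First I apply the standard description $H^1(H, E[p]) = \ker(N_H)/\im(h - 1)$, where $N_H = \sum_{i=0}^{p-1} h^i$. From $h^i = \sm{1}{i}{0}{1}$ one reads off that $N_H \equiv 0 \pmod{p}$, so the kernel is all of $E[p]$. On the other hand $h - 1 = \sm{0}{1}{0}{0}$ has image equal to $\FF_p \cdot e_1$, the span of the first basis vector. The quotient is therefore one-dimensional over $\FF_p$, with the class of $e_2$ as a generator.

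Next I would pin down the normalizer action. A quick computation gives $g h g^{-1} = h^{u/v}$ for any $g = \sm{u}{w}{0}{v}$, so every upper triangular matrix lies in $N(H)$, and in particular the $g$ of the statement does. The action of such a $g$ on $H^1(H, E[p])$ is induced at the cocycle level by $(g \cdot c)(h') = g \cdot c(g^{-1} h' g)$. Setting $k = v/u$ so that $g^{-1} h g = h^k$, this action on the representative $c(h) \in E[p]/\im(h-1)$ sends $c(h)$ to $g \cdot c(h^k)$.

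To finish, I apply this to the cocycle with $c(h) = e_2$, which represents the generator of $H^1(H, E[p])$. The cocycle identity gives $c(h^k) = \sum_{i=0}^{k-1} h^i e_2$, and since $h^i e_2 = i\, e_1 + e_2$ this equals $\binom{k}{2} e_1 + k\, e_2$. Multiplying by $g$ produces a vector whose second coordinate is $vk = v^2/u = u^{-1} v^2$; since we only care about the class modulo $\FF_p \cdot e_1$, the first coordinate is irrelevant, and $g$ acts on the generator by the scalar $u^{-1}v^2$ as claimed. The only point of care is the bookkeeping in the cocycle action formula, since it is conjugation by $g^{-1} h g$ rather than $g h g^{-1}$ that appears and this flips the roles of $u$ and $v$; I do not anticipate any deeper obstacle.
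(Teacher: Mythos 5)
Your proof is correct and takes essentially the same route as the paper: the norm/difference computation for the cyclic group $H$, the identification of a cohomology class with the second coordinate of its value on $h$, and the direct cocycle-level computation of the conjugation action using $g^{-1}hg = h^{v/u}$. The only cosmetic difference is that you track the explicit representative $c(h) = e_2$, whereas the paper carries along an arbitrary first coordinate (denoted $*$); this changes nothing in substance.
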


\begin{proof}
The cohomology of the cyclic group $H$ is computed to be
 \begin{equation*}
   H^1\bigl(H,E[p]\bigr) \cong \frac{ \ker\bigl( \sum_{a=0}^{p-1} h^a \bigr)}{\im(h-1)} = \frac{\ker{(0)}}{\im \sm{0}{1}{0}{0} } \cong \FF_p.
 \end{equation*}
 The explicit isomorphism $i\colon H^1\bigl(H,E[p]\bigr)\to \FF_p$ sends a cocycle $\xi\colon H\to E[p]$ to the second coordinate of $\xi(h)$.
 Let now $g= \sm{u}{w}{0}{v}$ be an element of $N(H)$ with $u, v\in\FF_p^{\times}$. Then the action of $g$ on $\xi \in H^1\bigl(H,E[p]\bigr)$ is as follows.
 \begin{align*}
   \bigl( g \star \xi \bigr)(h) &= g\, \xi\bigl( g^{-1} h g \bigr) \\
                            &= g\, \xi \bigl( h^{u^{-1}v} \bigr) \\
                            &= g \, \bigl( h^{u^{-1}v - 1 } + \cdots + h+ 1\bigr) \xi(h)\\
                            &= \sm{u}{0}{0}{v} \,\sm{u^{-1}v}{ * }{ 0 }{ u^{-1}v }\, \bigl(\begin{smallmatrix} * \\ i(\xi) \end{smallmatrix}\bigr)
 \end{align*}
 Here the terms denoted by $*$ are unknown entries which do not alter the result that
 \begin{equation*}
   i\bigl( g \star \xi \bigr) = u^{-1} \, v^2\, i(\xi).\qedhere
 \end{equation*}
\end{proof}

 For the remainder of this section we will assume that $p>2$ and $E$ satisfies $H^1\bigl(G,E[p]\bigr)\neq 0$; we wish to show that we fall into one of the cases listed in the Theorem~\ref{thm1}.

\begin{lem}
  Suppose $p>2$. Then $G$ satisfies $H^1\bigl(G,E[p]\bigr)\neq 0$ if and only if $p \not\equiv 1 \pmod{3}$ and there exists a basis of $E[p]$ such that $G$ consists of all matrices of the form $\sm{v^2}{w}{0}{v}$ with $v\in \FF_p^{\times}$ and $w\in \FF_p$. In this case, the cohomology group is isomorphic to $\FF_p$ and the representation $\rho$ is of the form $\sm{\chi^2}{*}{0}{\chi}$, where $\chi^3$ is the Teichm\"uller character $\omega$.
\end{lem}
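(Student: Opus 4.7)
The plan is to reduce to an invariants computation using Lemma~\ref{normalizer_action_lem}, after identifying the $p$-Sylow of $G$ with the cyclic unipotent subgroup. Combining Lemma~\ref{homothety_lem}, Lemma~\ref{homothety_borel_lem}, and Corollary~\ref{one_isogeny_cor}, we may already assume $G$ lies in the Borel of upper triangular matrices with a unique Galois-stable line, so in the chosen basis $\rho = \sm{\omega\chi^{-1}}{*}{0}{\chi}$. Let $U = G \cap \bigl\{\sm{1}{*}{0}{1}\bigr\}$ be the unipotent part of $G$. Since the ambient unipotent radical is cyclic of order $p$, $U$ is either trivial or the full group $H$ of Lemma~\ref{normalizer_action_lem}. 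If $U = 1$ then $G$ is contained in the diagonal torus, contradicting the uniqueness of the stable line given by the corollary (and, redundantly, $|G|$ would be coprime to $p$). So $U = H$.

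Next I would apply inflation-restriction along the normal subgroup $U \triangleleft G$. Because $|G/U|$ is coprime to $p$, both $H^1(G/U,-)$ and $H^2(G/U,-)$ vanish on the $p$-torsion module $E[p]$, so restriction furnishes an isomorphism
\[
H^1\bigl(G, E[p]\bigr) \;\cong\; H^1\bigl(U, E[p]\bigr)^{G/U}.
\]
By Lemma~\ref{normalizer_action_lem}, the right-hand side is a copy of $\FF_p$ on which an element with diagonal part $\sm{u}{0}{0}{v}$ acts by multiplication by $u^{-1}v^2$. Hence the invariants are nonzero precisely when $u = v^2$ for every such diagonal part of an element of $G$; in terms of the characters, this says $\omega\chi^{-1} = \chi^2$, equivalently $\chi^3 = \omega$.

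Finally I would translate this into the stated numerical condition. The hypothesis $F\cap\QQ(\mu_p) = \QQ$ forces $\det\rho = uv = v^3$ to surject onto $\FF_p^{\times}$, so the cubing map on $\FF_p^\times$ must itself be surjective, which happens exactly when $p \not\equiv 1 \pmod 3$. Under that condition $\chi$ is the unique cube root of $\omega$, every $v\in\FF_p^\times$ is realized by some element of $G$, and together with all of $U$ this forces $G$ to equal the full group $\bigl\{\sm{v^2}{w}{0}{v}\bigr\}$. The converse direction is the same computation read backwards, yielding $H^1(G, E[p]) \cong \FF_p$. The only part that requires real care is pinning down the $u^{-1}v^2$ action correctly and turning $u=v^2$ into the constraint $p\not\equiv 1\pmod 3$; once $G$ has been placed in the Borel with a single stable line, the rest is a short chain of reductions.
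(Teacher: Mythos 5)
Your proposal follows essentially the same route as the paper: reduce by inflation--restriction along the normal unipotent subgroup $H$ (your $U$), identify $H^1(G,E[p])$ with the $G/H$-invariants of $H^1(H,E[p])$, apply Lemma~\ref{normalizer_action_lem} to read off the $u^{-1}v^2$-action and deduce $u=v^2$, and finally use surjectivity of the determinant $v\mapsto v^3$ to obtain $p\not\equiv 1\pmod 3$ and that $G$ is the whole group. The one place you are slightly more roundabout than necessary is in excluding $U=1$: your parenthetical remark that $|G|$ would then be prime to $p$, forcing $H^1$ to vanish, is the cleanest reason and renders the appeal to a second stable line unnecessary.
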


\begin{proof}
 By Corollary~\ref{one_isogeny_cor}, we may view $G$ as a group of upper triangular matrices containing the subgroup $H$ generated by element $h = \sm{1}{1}{0}{1}$ of order $p$.

Since $H$ is a normal subgroup of $G$, we can use the inflation-restriction sequence to show that
 \begin{equation*}
   \xymatrix@1{H^1\bigl(G,E[p]\bigr) \ar[r] & H^1\bigl(H,E[p]\bigr)^{G/H} }
 \end{equation*}
 is an isomorphism because $G/H$ is of order coprime to $p$. Because we assumed that $H^1\bigl(G,E[p]\bigr)$ is non-trivial, by Lemma~\ref{normalizer_action_lem} we must have that $G/H$ acts trivially on $H^1\bigl(H,E[p]\bigr)$, that $H^1\bigl(G,E[p]\bigr)$ has precisely $p$ elements, and that all elements in $G$ must be of the form $\sm{v^2}{w}{0}{v}$ with $w\in \FF_p$ and $v\in\FF_p^{\times}$.

Recall that the character $\chi$ is such that $\rho = \sm{\omega\chi^{-1}}{*}{0}{\chi}$. We now deduce that $\chi^2 = \omega\chi^{-1}$ and hence $\chi^3 = \omega$. Since we assumed $F\cap \QQ(\mu_p) = \QQ$, the determinant $\omega$ from $G$ to $\FF_p^{\times}$ must be surjective. As the determinant of the typical element in $G$ is $v^3$ with $v\in \FF_p^{\times}$, we must conclude that either $p=3$ or $p\equiv 2 \pmod{3}$, and that $G$ is equal to the group of all matrices of the form  $\sm{v^2}{w}{0}{v}$.
\end{proof}

\begin{cor}
  If $p=3$, we have $H^1\bigl(G,E[3]\bigr)\neq 0$ if and only if $E$ has a $3$-torsion point and no other isogenies defined over $F$.
\end{cor}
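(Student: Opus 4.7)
The plan is to unpack the preceding lemma in the case $p=3$. The hypothesis $p\not\equiv 1\pmod 3$ is automatic, so the lemma asserts that $H^1(G,E[3])\neq 0$ if and only if, in some basis of $E[3]$, the image $G$ is exactly the subgroup $\{\sm{v^2}{w}{0}{v}:v\in\FF_3^\times,\,w\in\FF_3\}$. Because $v^2=1$ for every $v\in\FF_3^\times$, this is the full group $\{\sm{1}{w}{0}{v}\}$ of order $6$; it fixes the first basis vector but preserves no other line in $E[3]$.

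The forward direction is then immediate: the first basis vector supplies a rational $3$-torsion point, and the uniqueness of the 3-isogeny follows either from the explicit description of $G$ or directly from Corollary~\ref{one_isogeny_cor}.

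For the reverse direction, I would take a rational 3-torsion point $P$ such that $\langle P\rangle$ is the only $F$-rational cyclic subgroup of order $3$, extend to a basis $\{P,Q\}$ of $E[3]$, and show that $G$ must equal the full $S_3$ above. Within this $S_3$ the proper subgroups are the trivial subgroup, the unique order-$3$ unipotent subgroup, and three order-$2$ subgroups. The unipotent subgroup has trivial determinant and so is ruled out by the surjectivity of $\det\rho=\omega$ coming from $F\cap\QQ(\mu_3)=\QQ$. Each order-$2$ subgroup is generated by some $\sm{1}{w_0}{0}{-1}$, whose two distinct eigenvalues $\pm 1$ force it to be diagonalizable, and hence to preserve a second line in $E[3]$; this second line would give a second rational 3-isogeny, contradicting the uniqueness hypothesis. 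Hence $G$ is the full group of six matrices $\sm{1}{w}{0}{v}$, and the preceding lemma gives $H^1(G,E[3])\neq 0$.

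The only slightly delicate step is the last one, where one must verify that every order-$2$ subgroup of this Borel $S_3$ actually produces an extra Galois-stable line in $E[3]$ — a quick $2\times 2$ calculation, but it is exactly here that the ``no other isogenies'' hypothesis is used.
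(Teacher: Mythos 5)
Your proof is correct and follows the paper's approach of specializing the preceding lemma to $p=3$ (using $v^2=1$ in $\FF_3^\times$); you simply make explicit the short subgroup analysis of the order-$6$ Borel group that the paper leaves implicit in its final sentence. Both directions of the equivalence are handled soundly, including the key observation that the surjectivity of $\det\rho=\omega$ rules out subgroups of odd order while the ``no other isogenies'' hypothesis rules out the diagonalizable order-$2$ subgroups.
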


\begin{proof}
  This can only occur if the group $G$ is the group of matrices of the form $\sm{1}{w}{0}{v}$ of order $6$. This is precisely the case when $E(F)[3]$ is of order $3$ and no other isogenies are defined over $F$.
\end{proof}

\begin{lem}
  If $p=5$, we have $H^1\bigl(G,E[5]\bigr)\neq 0$ if and only if the quadratic twist of $E$ by $D = 5$ has a $5$-torsion point and no other isogenies defined over $F$.
\end{lem}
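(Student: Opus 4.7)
The plan is to translate the general classification established immediately above --- that $H^1(G,E[p]) \neq 0$ iff $p \not\equiv 1 \pmod 3$ and $G$ equals the order-$20$ group $\{\sm{v^2}{w}{0}{v}\}$, equivalently $\rho = \sm{\chi^2}{*}{0}{\chi}$ with $\chi^3 = \omega$ and non-split extension --- into the claimed property of the twist $E^5$ in the case $p = 5$.

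First I would solve $\chi^3 = \omega$. Since $\FF_5^{\times}$ is cyclic of order $4$ and $\gcd(3,4) = 1$, there is a unique solution $\chi = \omega^3 = \omega^{-1}$. Next I would identify the quadratic twist character: because $5 \equiv 1 \pmod 4$, the field $\QQ(\sqrt{5})$ is the unique quadratic subfield of $\QQ(\mu_5)$, so the associated character is $\epsilon_5 = \omega^{(5-1)/2} = \omega^2$. Twisting the representation therefore yields
\[
\rho \otimes \epsilon_5 = \sm{1}{*}{0}{\omega},
\]
so the first basis vector is Galois-fixed, and $E^5$ acquires a rational $5$-torsion point. Since reducibility and non-splitness of an upper-triangular representation are invariant under twist by a character, $E^5$ also inherits the property of admitting no second $F$-rational isogeny of degree $5$.

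For the converse, I would start from $\rho_{E^5}$ reducible and non-split, say $\sm{\alpha}{*}{0}{\beta}$ with $\alpha\beta = \omega$ and $* \neq 0$, and argue that the existence of a Galois-fixed vector forces $\alpha = 1$. The alternative placement $\sm{\omega}{*}{0}{1}$ admits no nonzero fixed vector when $*$ is non-split: a hypothetical fixed $(x,y)$ with $y \neq 0$ would make the map $g \mapsto *(g)/(\omega(g)-1)$ constant in $g$, exhibiting $*$ as a coboundary of $\omega$ and contradicting non-splitness. Hence $\rho_{E^5} = \sm{1}{*}{0}{\omega}$, and untwisting by $\epsilon_5 = \omega^2$ returns $\rho_E = \sm{\omega^2}{*}{0}{\omega^3} = \sm{\chi^2}{*}{0}{\chi}$ with $\chi = \omega^3$. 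Surjectivity of $\omega$ onto $\FF_5^{\times}$ (from $F \cap \QQ(\mu_5) = \QQ$) combined with the non-splitness of $*$ then forces $G$ to be the full order-$20$ group, and the preceding classification concludes $H^1(G, E[5]) \neq 0$.

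The main obstacle is the corner-placement step: ensuring that the trivial character sits in the upper-left corner and not in the quotient. This is precisely the point at which the non-splitness hypothesis (i.e.\ the absence of a second isogeny) interacts critically with the existence of the rational $5$-torsion point, and it is what singles out the twist $E^5$ --- rather than $E$ itself --- as the natural object in the statement. The remainder of the argument is bookkeeping among the powers of $\omega$.
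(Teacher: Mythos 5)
Your proposal is correct and takes essentially the same route as the paper: solve $\chi^3=\omega$ to pin down $\rho=\sm{\omega^2}{*}{0}{\omega^{-1}}$, identify $\omega^2$ as the quadratic character of $\QQ(\sqrt5)/\QQ$, and twist to reduce the classification to $\rho^\dagger=\sm{1}{*}{0}{\omega}$. The extra care you take in the converse about which corner the fixed vector lands in is handled implicitly in the paper by its standing choice of basis, with the first vector in the kernel $E[\varphi]$ of the unique $5$-isogeny, which is unchanged by the quadratic twist.
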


\begin{proof}
  This happens precisely when we have
  \begin{equation*}
   \rho = \begin{pmatrix} \omega^2 & * \\ 0 & \omega^{-1} \end{pmatrix}
  \end{equation*}
 Here $\omega^2$ is the quadratic character corresponding to the non-trivial extension $F\bigl(\sqrt{5}\bigr)/F$ contained in $F(\mu_5)$. Let $E^{\dagger}$ be the quadratic twist of $E$ by $D=5$. Then we have the desired form of representation $\rho$ if and only if the representation $\rho^{\dagger}$ on $E^{\dagger}[5]$ is now of the form $\sm{1}{*}{0}{\omega}$. We conclude that this occurs if and only if $E^{\dagger}(F)[5]$ has five points and $E^{\dagger}$ has no other isogenies of degree $5$ defined over $F$.
\end{proof}

\begin{thm}\label{thm1_general}
  Let $E$ be an elliptic curve defined over a number field $F$ with $F\cap \QQ(\mu_p) = \QQ$. Let $K = F\bigl(E[p]\bigr)$ and $G=\Gal(F/K)$. Then $H^1\bigl(G,E[p]\bigr)=0$ except in the following cases:
  \begin{itemize}
    \item $p=3$, there is a rational $3$-torsion point in $E(F)$, and there are no other $3$-isogenies from $E$ defined over $F$.
    \item $p=5$ and the quadratic twist of $E$ by $D=5$ has a rational point of order $5$, but no other isogenies of degree $5$ defined over $F$.
    \item $p\geq 11$, $p\equiv 2 \pmod{3}$, there is a unique isogeny $\varphi\colon E \to E'$ of degree $p$ defined over $F$, its kernel $E[\varphi]$ acquires a rational point over $F\cdot \QQ(\mu_p)^{+}$, and $E[\varphi] \cong \mu_p^{\otimes (p+1)/3}$.
  \end{itemize}
  There are only finitely many cases for each prime $p$ with $p>17$.
\end{thm}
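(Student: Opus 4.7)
The plan is to assemble the lemmas of this section and then treat the remaining range $p\geq 7$ together with the finiteness assertion. The first lemma of Section~\ref{sec:main-theorem} handles $p=2$. For $p>2$ the key structural lemma of this section forces any non-vanishing instance into the shape $p\not\equiv 1\pmod{3}$ with $G$ equal to the subgroup of upper triangular matrices of the form $\sm{v^2}{w}{0}{v}$; its immediate consequences identify the exceptional cases for $p=3$ and $p=5$. What remains is to show that for $p\geq 7$ with $p\equiv 2\pmod 3$ the third bullet describes exactly the non-vanishing cases, and that the exceptional set is finite for $p>17$.

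Under these hypotheses, Corollary~\ref{one_isogeny_cor} supplies a unique degree-$p$ isogeny $\varphi\colon E\to E'$ over $F$, and the character on the kernel $E[\varphi]$ is $\chi^2$, where $\chi$ is the character on $E'[\hat\varphi]$ and $\chi^3=\omega$. Since $p\equiv 2\pmod 3$ makes cubing a bijection on $\FF_p^\times$, the congruence $3\cdot(p+1)/3=p+1\equiv 2\pmod{p-1}$ forces $\chi^2=\omega^{(p+1)/3}$, hence $E[\varphi]\cong \mu_p^{\otimes(p+1)/3}$ as Galois modules. Because $p\equiv 5\pmod 6$, the exponent $(p+1)/3$ is even, so $\omega^{(p+1)/3}$ is trivial on complex conjugation and $\chi^2$ factors through $\Gal\bigl(F\cdot \QQ(\mu_p)^{+}/F\bigr)$; thus $E[\varphi]$ acquires a rational point over $F\cdot\QQ(\mu_p)^{+}$. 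For the converse, given $E/F$ with a unique $p$-isogeny $\varphi$ such that $E[\varphi]\cong \mu_p^{\otimes(p+1)/3}$, the assumption $F\cap\QQ(\mu_p)=\QQ$ makes $\omega$ and hence $\chi$ surjective onto $\FF_p^\times$, and uniqueness of $\varphi$ places $G$ in a Borel subgroup that is not diagonalisable; conjugating a non-trivial unipotent element by the diagonal subgroup then fills out the entire unipotent direction, so $G=\bigl\{\sm{v^2}{w}{0}{v}\bigr\}$ and the key lemma yields $H^1\bigl(G,E[p]\bigr)\cong \FF_p$.

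For the finiteness assertion I plan to argue via modular curves: the exceptional pairs $(E,\varphi)/F$ correspond to $F$-rational points on a twist $X$ of the modular curve $X_0(p)$ that records the prescribed cyclotomic structure $\mu_p^{\otimes(p+1)/3}$ on the kernel. Twisting preserves geometric genus, and for $p\geq 23$ the genus of $X_0(p)$ is at least $2$, so Faltings' theorem gives $|X(F)|<\infty$ and hence only finitely many exceptional $j$-invariants. The only prime strictly between $17$ and $23$ is $p=19$, which is automatically excluded by $19\equiv 1\pmod 3$, so the cutoff $p>17$ in the statement is precisely the point at which the relevant modular curve first has genus~$\geq 2$. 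The main technical obstacle I anticipate is the construction and genus-preservation of the twist $X$ and the verification that its $F$-points genuinely biject with the exceptional $(E,\varphi)$; the character arithmetic in the preceding paragraph is routine once that setup is in place.
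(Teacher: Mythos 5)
Your reduction to the preliminary lemmas and the character arithmetic are both correct and line up with the paper: the unnamed lemma forcing $p\not\equiv 1\pmod 3$ and $G=\bigl\{\sm{v^2}{w}{0}{v}\bigr\}$ is the key input, the corollaries handle $p=3,5$, and your computation that $\chi^3=\omega$ together with $\gcd(3,p-1)=1$ yields $E[\varphi]\cong\mu_p^{\otimes(p+1)/3}$ and that $(p+1)/3$ is even is exactly how the shape $\rho=\sm{\omega^{(p+1)/3}}{*}{0}{\omega^{(2-p)/3}}$ and the $\QQ(\mu_p)^{+}$-rationality are explained (you spell out more detail than the paper does).

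The finiteness step is where your plan runs into trouble, and you have sensed it yourself. You propose to realize the exceptional pairs $(E,\varphi)$ as the $F$-points of a \emph{twist} of $X_0(p)$ ``recording the cyclotomic structure'' on $E[\varphi]$, and to use genus invariance under twisting. But no twist of $X_0(p)$ can see the Galois-module structure of the isogeny kernel: the coarse moduli interpretation of $X_0(p)$ is the pair $(E,C)$ up to $\bar\QQ$-isomorphism, which by construction is blind to the precise character on $C$; a curve whose $F$-points track a prescribed isomorphism $C\cong\mu_p^{\otimes k}$ would have to be a nontrivial \emph{cover} of $X_0(p)$ (a twist of $X_1(p)$ or $X_\mu(p)$, say), not a twist of $X_0(p)$, and hence has different geometric genus. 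So ``twisting preserves geometric genus'' is not available for the object you actually need, and your own caveat about ``construction and genus-preservation of $X$'' is precisely where the argument breaks.

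The paper avoids this entirely with a shorter argument: an exceptional $(E,\varphi)$ gives an $F$-point of $Y_0(p)$; since $Y_0(p)$ has genus $\geq 2$ for the relevant primes $p>17$ with $p\equiv 2\pmod 3$, Faltings gives finitely many such points, hence finitely many $j$-invariants and finitely many $\bar\QQ$-isomorphism classes of $(E,\varphi)$; and within each class the $F$-forms of $E$ are quadratic twists of one another, of which only one can have $\rho$ of the prescribed diagonal shape. No refined moduli space or twist of a modular curve needs to be built. If you want to keep a moduli-theoretic flavour you could instead pass to $X_1(p)$ (genus at least $2$ for $p\geq 13$) and its cyclotomic twist, but the twist-counting argument the paper uses is simpler and closes your gap directly.
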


\begin{proof}
 The only remaining cases to prove are those where $p > 5$. As we may assume $p\equiv 2 \pmod{3}$, one sees that
 \begin{equation*}
   \rho = \begin{pmatrix} \omega^{\frac{p+1}{3} } & * \\ 0 & \omega^{\frac{2-p}{3}} \end{pmatrix} .
 \end{equation*}
 This explains the condition in the cases $p\geq 11$ in the above list.

 The curve $E$ and its unique isogeny $\varphi$ of degree $p$ defined over $F$ represent a point on the modular curve $Y_0(p)$ defined over $F$. For $p=11$ and $p=17$, the curve $Y_0(p)$ is of genus $1$; for all larger primes $p\equiv 2\pmod{3}$ it is of genus at least two. Therefore there are only finitely many $\bar\QQ$-isomorphism classes of curves $E/F$ with an isogeny of degree $p$ defined over $F$. Only a single twist in each class can have $\rho$ of the above shape. Hence there are only finitely many exceptions for $p>17$.
\end{proof}

We specialize now to the field $F=\QQ$ where the points on $Y_0(p)$ are well-known.

\begin{lem}\label{121c2_lem}
  If $F = \QQ$ and $p > 5$, we have $H^1\bigl(G,E[p]\bigr)\neq 0$ if and only if $E$ is the curve labeled as 121c2 in Cremona's tables.
\end{lem}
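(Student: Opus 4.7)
The strategy is to combine Theorem~\ref{thm1_general} with Mazur's classification of rational isogenies and the classical determination of $\QQ$-rational points on the relevant modular curves, then to perform a finite case check. By Theorem~\ref{thm1_general}, for $p > 5$ the non-vanishing of $H^1(G, E[p])$ forces $p \equiv 2 \pmod{3}$, the existence of a unique $\QQ$-rational isogeny $\varphi \colon E \to E'$ of degree $p$, and the isomorphism of Galois modules $E[\varphi] \cong \mu_p^{\otimes (p+1)/3}$.

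Mazur's theorem classifying rational isogenies of prime degree over $\QQ$ restricts $p$ to the list $\{2, 3, 5, 7, 11, 13, 17, 19, 37, 43, 67, 163\}$. Intersecting with the conditions $p > 5$ and $p \equiv 2 \pmod{3}$ leaves exactly $p = 11$ and $p = 17$. For both of these primes, $X_0(p)$ is a genus-$1$ curve of Mordell--Weil rank $0$ over $\QQ$, so it has only finitely many $\QQ$-rational points, yielding a short and explicit list of candidate pairs $(E, \varphi)$. For $p = 11$ these candidates are the six curves comprising the Cremona isogeny classes 121a, 121b, and 121c; for $p = 17$ they are the two curves linked by the unique rational $17$-isogeny (of conductor $14450$).

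The remaining task is to verify the character condition $E[\varphi] \cong \mu_p^{\otimes (p+1)/3}$ for each candidate. Writing $\rho = \sm{\omega \chi^{-1}}{*}{0}{\chi}$ as in Section~\ref{sec:prel-notat}, this amounts to checking that $\chi = \omega^{(2-p)/3}$. The character $\chi$ can be detected by computing the action of $\Fr_\ell$ on the kernel $E'[\hat\varphi]$ for a few small primes $\ell$ of good reduction, or from the local Galois representation at $p$ via the Tate parametrization at a prime of multiplicative reduction and the reduction type of $E$ and $E'$ there.

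This case-by-case verification is the main obstacle. I expect it to isolate 121c2 as the unique candidate fitting all conditions. Its isogenous partner 121c1 should play the Atkin--Lehner dual role, so that $E[\hat\varphi]$ rather than $E[\varphi]$ is isomorphic to $\mu_{11}^{\otimes 4}$ for that curve; the two curves in the isogeny class 121b (which are CM curves with CM by an order in $\QQ(\sqrt{-11})$) should have their kernel characters determined by the Hecke character of that imaginary quadratic field, which does not match $\omega^4$; the 121a candidates should likewise fail; and the $p = 17$ candidates should fail since the required character is $\omega^{-5}$, which is incompatible with the kernel character arising from the single known rational $17$-isogeny class.
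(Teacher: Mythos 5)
Your outline tracks the paper's proof step for step: invoke Theorem~\ref{thm1_general} to force $p\equiv 2\pmod 3$ and the character condition $E[\varphi]\cong \mu_p^{\otimes(p+1)/3}$, invoke Mazur to cut down to $p\in\{11,17\}$ with finitely many $\bar\QQ$-isomorphism classes, and then check each class. However, there are two genuine problems with the write-up.

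First, the candidate list is finite only up to $\bar\QQ$-isomorphism; each of the five non-cuspidal points on $Y_0(11)$ and $Y_0(17)$ corresponds to an infinite quadratic twist family over $\QQ$, not to a single Cremona label. Your ``six curves 121a, 121b, 121c'' and ``two curves of conductor $14450$'' therefore do not exhaust the candidates. The correct reduction (which the paper makes explicit) is that the determinant of $\rho$ is fixed to be $\omega$, so within each twist family at most one twist can put $\rho$ in the required shape $\sm{\omega^{(p+1)/3}}{*}{0}{\omega^{(2-p)/3}}$; one then picks any convenient representative, computes its $\rho$, and checks whether the necessary twisting character is quadratic.

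Second, and more seriously, the proposal does not carry out the verification that is the substance of the lemma: it ends with ``I expect it to isolate 121c2'' and a list of heuristic reasons why the remaining candidates ``should'' fail. The paper actually does the work: for 121c2 it exhibits a point $P$ of order $11$ in $E(\QQ(\mu_{11}))$, checks $\sigma(P)=5P$ for $\sigma(\zeta)=\zeta^2$ to identify the kernel character as $\omega^4$, and concludes $\rho=\sm{\omega^4}{*}{0}{\omega^7}$; for the $j$-invariants $-32768$ and $-121$ it computes $\rho=\sm{\omega^8}{*}{0}{\omega^3}$ and $\sm{\omega^7}{*}{0}{\omega^4}$ respectively and observes that no quadratic twist can bring these to the required shape; and for $p=17$ it rules out all twists in both families at once via the trace congruence $\pm 3^6\pm 3^{11}\not\equiv a_3\pmod{17}$. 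Your suggestion to dispatch 121b via its CM by $\QQ(\sqrt{-11})$ and the associated Hecke character is a valid alternative route for that one class, but it too is left as a prediction rather than an argument. As it stands, the proof is incomplete precisely at the step that distinguishes 121c2 from its competitors.
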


\begin{proof}
 For all those $p$, there are only a finite number of $\bar \QQ$-isomorphism classes of elliptic curves $E$ with a $p$-isogeny defined over $\QQ$. Mazur's theorem~\cite{mazur_isogenies} shows that there are no rational points on $Y_0(p)$ except for three points on $Y_0(11)$ and two points on $Y_0(17)$. All of these five examples have no other automorphisms than $\pm 1$. Hence, all elliptic curves $E/\QQ$ representing one of them are quadratic twists of each other.

 Let us first look at $p=11$. The $j$-invariants of the three families are $-121$, $-32768$, and $-24729001$, and the representation $\rho$ must now be of the form $\sm{\omega^4}{*}{0}{\omega^7}$. We start with the last. The curve 121c2 is an example of an elliptic curve with $j$-invariant $-24729001$. Using SageMath~\cite{sage}, we find a point $P$ of order $11$ in $E\bigl(\QQ(\mu_{11})\bigr)$. Its $x$-coordinate in the global minimal model given above is $11\zeta^9 + 11\zeta^8+22\zeta^7+22\zeta^6+22\zeta^5+22\zeta^4+11\zeta^3+11\zeta^2+39$, where $\zeta$ is a primitive $11$-th root of unity. One finds that $\sigma(P) = 5P$ for the Galois element with $\sigma(\zeta) = \zeta^2$. Therefore the action of Galois on the group generated by $P$ is given by $\omega^4$. The isogeny with $P$ in its kernel is defined over $\QQ$ and it is the only isogeny on $E$ defined over $\QQ$. Therefore the group $G$ is precisely of the form required. Hence $H^1\bigl(G, E[p]\bigr)$ has $p$ elements. No quadratic twist of $E$ could have the same property.

 With similar computation one finds that the group $G$ for the curve 121b1 with $j$-invariant $-32768$ is of the form $\sm{\omega^8}{*}{0}{\omega^3}$ and for the curve 121c1 with $j$-invariant $-121$ it is $\sm{\omega^7}{*}{0}{\omega^4}$. No quadratic twist of these curves could have the required form for $G$.

 For $p=17$, the representation $\rho$ must now be of the form $\sm{\omega^6}{*}{0}{\omega^{11}}$. In particular, for any prime $\ell\neq 11$ of good reduction for $E$, the Frobenius element is sent to a matrix of the form $\sm{\ell^6}{*}{0}{\ell^{11}}$. We conclude that we must have $\ell^6 + \ell^{11} \equiv a_{\ell} \pmod{ p}$, where $a_{\ell}$ is the trace of Frobenius. This gives an easy criterion to rule out specific curves.

 There are two $j$-invariants of elliptic curves that admit a $17$-isogeny over $\QQ$: $-297756989/2$ and  $-882216989/131072$. In fact, these values were computed by V\'elu and published on page 80 of~\cite{mfov4}. We pick a curve $E$ for each of these $j$-invariants. The curves 14450p1 and 14450n1 are examples. Now for both curves, it is easy to show that $\pm 3^6 \pm 3^{11} \not\equiv a_3 \pmod{17}$ for any choice of the signs as $a_3=\pm 2$. Therefore no quadratic twist of $E$ will satisfy the congruence that we need. Thus $H^1\bigl( G, E[p]\bigr) = 0$ for all curves with a degree-$17$ isogeny. Similar computations were done by Greenberg in Remark~2.1.2 in~\cite{greenberg}.

\end{proof}

This concludes the proof of Theorem~\ref{thm1}.

\section{Vanishing of the second cohomology}\label{sec:second-cohom}

  We continue to assume that $E$ is defined over a number field $F$ such that $F\cap \QQ(\mu_p) = \QQ$.

\begin{lem}\label{h2_lem}
  Let $p$ be a prime. Then $H^2\bigl(G, E[p]\bigr) =0$ except if $p>2$, $E$ admits a $p$-isogeny $\varphi\colon E \to E'$ and no other $p$-isogenies over $F$, and $E'[\hat\varphi]$ contains an $F$-rational $p$-torsion point. If this cohomology group is non-zero then it contains $p$ elements.
\end{lem}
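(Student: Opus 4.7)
The plan is to split into cases based on the structure of $G$ and reduce to an explicit computation of $H^2$ of a cyclic group of order $p$, paying special attention to the action of the normalizer.

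Two cases are easy. For $p=2$, I would mimic the first lemma in this section: let $N\leq G \leq \GL_2(\FF_2)\cong S_3$ be the intersection with the normal order-$3$ subgroup. If $N$ is trivial, $G$ has order at most $2$ and $H^2(G, E[2])$ can be computed directly. If $|N|=3$, then $H^i(N, E[2])=0$ for $i>0$ and $E[2]^N=0$, so the Lyndon--Hochschild--Serre spectral sequence forces $H^n(G, E[2]) = 0$ for $n>0$. For $p>2$ with $G$ containing a non-trivial homothety $g$, the same spectral sequence applied to $\langle g\rangle\triangleleft G$ gives $H^n(G, E[p]) = 0$ for all $n$, since $E[p]^{\langle g\rangle}=0$ and $|\langle g\rangle|$ is coprime to $p$.

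This reduces to $p>2$ with no non-trivial homothety in $G$. By Lemma~\ref{homothety_borel_lem}, $G$ lies in a Borel with $\rho = \sm{\omega\chi^{-1}}{*}{0}{\chi}$. Let $H=\langle\sm{1}{1}{0}{1}\rangle$; since $H$ is normal in the Borel, it is normal in $G$. If $G\cap H = 1$ then $G$ is diagonalizable, $E$ admits a second $p$-isogeny, and $|G|$ is coprime to $p$, so $H^2=0$. Otherwise $H\leq G$, $|G/H|$ is coprime to $p$, and $\varphi$ is the unique $p$-isogeny from $E$. The spectral sequence then yields $H^2(G, E[p]) \cong H^2(H, E[p])^{G/H}$. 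A direct calculation with $N_H = \sum_{k=0}^{p-1}\bigl(I + k\sm{0}{1}{0}{0}\bigr) = 0$ on $E[p]$ gives $H^2(H, E[p]) = E[p]^H = E[\varphi]$ as $\FF_p$-vector spaces.

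The key remaining step is identifying the twisted $G/H$-action on this line. Via the cup product, $H^2(H, E[p]) \cong H^0(H, E[p])\otimes H^2(H,\ZZ)$, and $H^2(H,\ZZ) \cong \Hom(H, \QQ/\ZZ)$ carries the action inverse to the conjugation character $\psi$ of $G$ on $H$. The computation $\sm{a}{b}{0}{d}\sm{1}{1}{0}{1}\sm{a}{b}{0}{d}^{-1} = \sm{1}{ad^{-1}}{0}{1}$ gives $\psi = \omega\chi^{-2}$, so the action on $H^2(H, E[p])$ is by $(\omega\chi^{-1})\cdot \psi^{-1} = \chi$. Hence $H^2(G, E[p])$ is non-trivial precisely when $\chi$ is trivial, i.e.\ when $E'[\hat\varphi]$ contains an $F$-rational $p$-torsion point, in which case it has $p$ elements. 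The main obstacle will be getting the twist by $\psi^{-1}$ right; without it one would wrongly predict that the obstruction is rational $p$-torsion on $E$ rather than on $E'$.
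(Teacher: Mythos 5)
Your proposal is correct and follows essentially the same strategy as the paper: dispose of $p=2$ and the homothety case via Lyndon--Hochschild--Serre, reduce via Lemma~\ref{homothety_borel_lem} to $G$ in a Borel with $H=\langle\sm{1}{1}{0}{1}\rangle$ as Sylow $p$-subgroup, identify $H^2(H,E[p])$ with $E[\varphi]$, and then track the $G/H$-action on this line via a cup-product description. The differences are cosmetic: for $p=2$ the paper restricts to a Sylow $2$-subgroup rather than passing through $N = G\cap(\text{order-}3\text{ normal subgroup})$, and in the main case the paper carries out the action computation coordinate-wise on the factors of $\delta\psi\cup P$ rather than naming the conjugation character $\psi=\omega\chi^{-2}$ and multiplying characters as you do. Your version makes the $\psi^{-1}$-twist --- which is indeed the step one most easily gets wrong --- explicit and transparent, and correctly lands on $\chi$ as the acting character. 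Two minor points worth tightening in a final write-up: the phrase ``$H$ is normal in $G$'' should be ``$G\cap H$ is normal in $G$,'' since $H$ need not lie inside $G$ a priori (though $|H|=p$ prime means $G\cap H$ is $1$ or $H$, as you use); and the ``compute directly'' for $p=2$, $|G|=2$ deserves the one-line verification that $\hat{H}^0(\langle h\rangle,E[2])=0$ for an order-$2$ element $h\in\GL_2(\FF_2)$.
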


We could also write the condition in the lemma as either that $E[\varphi] \cong \mu_p$ or that $\chi$ is trivial.

\begin{proof}
  As before, only the cases when $p$ divides the order of $G$ are of interest.

  We again discuss the case $p=2$ separately. A Sylow subgroup of $G$ is a cyclic group of order $2$ generated by $h$, and the restriction $H^2\bigl(G,E[p]\bigr) \to H^2\bigl(\langle h\rangle,E[p]\bigr)$ is an inclusion. However, $H^2\bigl(\langle h\rangle,E[p]\bigr)$ can be computed as the Tate cohomology group $\hat{H}^0\bigl(\langle h\rangle,E[p]\bigr)$, which is zero.

  For $p > 2$, we have to deal with the cases when $G$ contains $\SL\bigl(E[p]\bigr)$ and when $G$ is contained in a Borel subgroup.

  In the first case, $G$ is actually the full group $\GL\bigl(E[p]\bigr)$ as the Weil pairing forces the determinant to be surjective. If $Z$ is the center of $G$, then $H^i\bigl(Z,E[p]\bigr)=0$ for all $i\geq 0$. The Hochschild-Serre spectral sequence implies that $H^i\bigl(G,E[p]\bigr)=0$ for all $i \geq 0$.

  Now we may assume that $G$ is contained in the Borel subgroup of upper-triangular matrices.  If there is more that one isomorphism class of $p$-isogeny leaving $E$ which is defined over $F$, then $G$ is of order coprime to $p$ and hence $ H^2\bigl(G, E[p]\bigr) =0$. Therefore, we may assume that $G$ contains the unique $p$-Sylow $H$ generated by $h=\sm{1}{1}{0}{1}$. Since $H$ is normal and $G/H$ is of order coprime to $p$, the restriction
  \begin{equation*}
   H^2\bigl(G,E[p]\bigr) \cong H^2\bigl(H,E[p]\bigr)^{G/H}
  \end{equation*}
  is an isomorphism.

  Fix an injective homomorphism $\psi\colon H\to \QQ/\ZZ$. Let $\delta\colon H^1\bigl(H,\QQ/\ZZ\bigr) \to H^2\bigl(H,\ZZ\bigr)$ be the connecting homomorphism. Then we have an isomorphism $\hat{H}^0\bigl( H, E[p]\bigr) \to H^2\bigl(H,E[p]\bigr)$ given by sending a point $P\in E[p]$ to the cup product $\delta\psi\cup P$. For $p>2$, the Tate cohomology group $\hat{H}^0\bigl( H, E[p]\bigr)$ is equal to the usual cohomology group $H^0\bigl(H,E[p]\bigr) = E[\varphi]$, which has $p$ elements.

  Let $g=\sm{u}{w}{0}{v}\in G$. On the one hand, it acts on $P$ by multiplication by $u$. On the other hand, it acts on $\psi$ by multiplication by $u^{-1} v$ because
  \begin{equation*}
   (g \star \psi) (h) = g\, \psi\bigl(g^{-1}hg\bigr) = \psi\bigl(h^{u^{-1}v}\bigr) = u^{-1}v \,\psi(h).
  \end{equation*}
  It follows that $g$ acts on the generator of $H^2\bigl(H,E[p]\bigr)$ by multiplication by $uu^{-1}v = v$. Unless all such $g \in G$ have $v=1$, we conclude that the second cohomology group vanishes. Otherwise it has $p$ elements, and this occurs if and only if $E'[\hat\varphi]$ contains a rational $p$-torsion point.
\end{proof}

\section{Application to the conjecture of Birch and Swinnerton-Dyer and $p$-descent}\label{sec:appl-bsd}

The vanishing of the Galois cohomology group we consider is used when trying to extend Kolyvagin's results to find a sharper bound on the Birch and Swinnerton-Dyer conjecture for elliptic curves of analytic rank at most $1$. This was the original motivation in Cha's work~\cite{cha}. In~\cite{steinetal}, the authors attempt to extend Cha's results, but there is a mistake in the proof of their Lemma~5.4 and consequently their Theorem~3.5 is not correct. The latter is also copied as Theorem~5.3 in~\cite{miller}. Using our results above, we can now state and prove a corrected version of Theorem~3.5 in~\cite{steinetal}. We refer to the original paper for the notations.

\begin{thm}\label{heegner_thm}
  Let $E/\QQ$ be an elliptic curve of analytic rank at most $1$. Let $p$ be an odd prime. Let $F$ a quadratic imaginary field satisfying the Heegner hypothesis and suppose $p$ does not ramify in $F/\QQ$. Suppose that $(E,p)$ does not appear in the list of Theorem~\ref{thm1} and that $E$ is not isogenous to an elliptic curve over $\QQ$ such that the dual isogeny contains a rational $p$-torsion point. Then the $p$-adic valuation of the order of the Tate-Shafarevich group is bounded by twice the index of the Heegner point.
\end{thm}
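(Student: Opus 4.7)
The plan is to follow the argument of Theorem~3.5 in~\cite{steinetal} line by line, replacing their faulty Lemma~5.4 with the Galois cohomology vanishing statements proved above in Theorem~\ref{thm1} and Lemma~\ref{h2_lem}. That lemma is invoked at the descent step where one identifies the $p$-Selmer group $\Sel_p(E/F)$, naturally sitting inside $H^1(G_F, E[p])$, with its image in $H^1(G_K, E[p])^G$ for $K = F(E[p])$. The inflation-restriction sequence
\begin{equation*}
  0 \to H^1(G, E[p]) \to H^1(G_F, E[p]) \to H^1(G_K, E[p])^G \to H^2(G, E[p])
\end{equation*}
makes restriction an isomorphism exactly when both outer terms vanish.

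First, I would invoke Theorem~\ref{thm1}: since $(E,p)$ lies outside its exceptional list by hypothesis, $H^1(G, E[p]) = 0$. Second, I would invoke Lemma~\ref{h2_lem}: non-vanishing of $H^2(G, E[p])$ would require the existence of a unique $\QQ$-rational degree-$p$ isogeny $\varphi\colon E\to E'$ whose dual kernel $E'[\hat\varphi]$ contains a rational $p$-torsion point, which is precisely what the remaining hypothesis on $E$ excludes. Hence $H^2(G, E[p]) = 0$ as well.

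With both outer terms trivial, the comparison of Selmer classes with their Galois-invariant images over $K$ becomes the bijection tacitly claimed in~\cite{steinetal}. Kolyvagin's Euler system of Heegner points, after taking derivatives and reducing modulo $p$, then produces the annihilating cohomology classes in $H^1(G_F, E[p])$ needed for the descent, and the quantitative bound on $v_p\bigl(|\Sha(E/F)|\bigr)$ by twice the index of the Heegner point $y_K \in E(F)$ drops out exactly as in~\cite{cha} and~\cite{steinetal}; the factor of two arises from the $\pm$-eigenspace decomposition under complex conjugation.

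The hard part will not be the cohomological input, which is completely settled above, but rather a careful audit of~\cite{steinetal} to confirm that no other step of the argument is broken by the substitution of hypotheses, and in particular that the construction and non-triviality of the Kolyvagin derivative classes still go through under the corrected conditions on $(E,p)$.
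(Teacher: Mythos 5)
Your overall strategy is correct and matches the paper's: the proof of Theorem~3.5 in \cite{steinetal} is salvaged by replacing their flawed Lemma~5.4 with the vanishing statements $H^1(G,E[p])=0$ and $H^2(G,E[p])=0$ supplied by Theorem~\ref{thm1} and Lemma~\ref{h2_lem}, which your hypotheses put directly in force. You have also correctly identified the inflation-restriction mechanism by which these two vanishing statements make the relevant restriction map an isomorphism.

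However, there is one genuine gap. Theorem~\ref{thm1} and Lemma~\ref{h2_lem} (applied to $E/\QQ$) give the vanishing of $H^i\bigl(\Gal(\QQ(E[p])/\QQ),\,E[p]\bigr)$ for $i=1,2$. But the Kolyvagin/Heegner descent in \cite{steinetal} runs over the imaginary quadratic field $F$, so what one actually needs is the vanishing of $H^i\bigl(\Gal(F(E[p])/F),\,E[p]\bigr)$. These two Galois groups need not coincide a priori; they do coincide, with compatible module structure, precisely when $F\cap\QQ(E[p])=\QQ$. Your proposal silently applies the $\QQ$-statements over $F$ without noticing this gap. The missing observation, and the reason the Heegner hypothesis and the unramifiedness of $p$ in $F$ appear among the hypotheses, is that these two assumptions force $F$ and $\QQ(E[p])$ to have disjoint ramification sets, whence $F\cap\QQ(E[p])=\QQ$ and $\Gal(F(E[p])/F)\cong\Gal(\QQ(E[p])/\QQ)$. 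Without this step the two hypotheses on $F$ play no role in your argument, which should itself have signalled that something was being skipped. (For completeness, the paper also records that the CM-avoidance hypothesis in \cite{steinetal} is superfluous, citing \cite{miller}; this is a minor remark rather than a logical necessity.)
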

\begin{proof}
  In their proof, only the vanishing of $H^1\bigl(G,E[p]\bigr)$ and $H^2\bigl(G,E[p]\bigr)$ are needed for the argument. Under our assumptions they both vanish by Theorem~\ref{thm1} and Lemma~\ref{h2_lem}. One has also to note that, as pointed out in~\cite{miller}, the assumption in their theorem that E does not admit complex multiplication is not used in the proof. Finally, the paper~\cite{steinetal} needs that $F$ is not included in $K=\QQ\bigl(E[p]\bigr)$ to conclude that $H^i\bigl(\Gal(F(E[p])/F),E[p]\bigr)$ also vanishes for $i=1$ and $2$. This is guaranteed by the Heegner hypothesis and the assumption that $p$ does not ramify in $F$, as $F$ and $K$ then have disjoint sets of ramified primes.
\end{proof}

The following is a short-cut in the usual $p$-descent for $E=$121c2 and $p=11$. It is not a new result as it appears already in~\cite{miller_stoll} as Example~7.4. However it illustrates that the non-trivial class in $H^1\bigl(G,E[p]\bigr)$ can be of use.
\begin{prop}\label{11descent_prop}
  The Tate-Shafarevich group of the curve 121c2 does not contain any non-trivial elements of order $11$. The full Birch and Swinnerton-Dyer conjecture holds for this curve.
\end{prop}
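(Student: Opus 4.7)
The plan is to reduce to showing $\Sel_{11}(E/\QQ)=0$, then bound this Selmer group by an $11$-isogeny descent, and finally rule out the one potential class that survives by identifying it with the inflation of the nontrivial class from $H^1(G,E[11])$ given by Theorem~\ref{thm1}.

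First, by Mazur's torsion theorem no elliptic curve over $\QQ$ has a rational $11$-torsion point, and 121c2 is known from Cremona's tables to have rank zero, so $E(\QQ)/11\,E(\QQ)=0$. The descent exact sequence then yields $\Sel_{11}(E/\QQ)\cong\Sha(E/\QQ)[11]$, and it suffices to show the Selmer group vanishes. I would attack this via the unique degree-$11$ isogeny $\varphi\colon E\to E'$ over $\QQ$ (so $E'=$~121c1) together with its dual $\hat\varphi$. By the computation in the proof of Lemma~\ref{121c2_lem}, $E[\varphi]\cong\mu_{11}^{\otimes 4}$ and $E'[\hat\varphi]\cong\mu_{11}^{\otimes 7}$ as $G_\QQ$-modules, and the short exact sequence $0\to E[\varphi]\to E[11]\to E'[\hat\varphi]\to 0$ gives, after imposing Selmer conditions, the exact sequence
\[
0\to \Sel^{(\varphi)}(E/\QQ)\to \Sel_{11}(E/\QQ)\to \Sel^{(\hat\varphi)}(E'/\QQ).
\]
Each isogeny Selmer group sits in an $H^1(\QQ,\mu_{11}^{\otimes k})$ that, by twisted Kummer theory, is a specific eigenspace of $\QQ(\mu_{11})^\times/11$. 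Since the conductor of $E$ is $11^2$ and $\QQ(\mu_{11})$ has class number one, the global candidates unramified outside $11$ are very few in number; I expect the local condition at $11$ to cut them down so that the remaining candidate classes in $\Sel_{11}(E/\QQ)$ lie in the image of the inflation $H^1(G,E[11])\hookrightarrow H^1(\QQ,E[11])$ furnished by Theorem~\ref{thm1}.

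The remaining task, and the main obstacle, is to show that this inflation class does not satisfy the local Selmer condition at $11$. I would do so by an explicit description of $E(\QQ_{11})/11\,E(\QQ_{11})$ through the formal group at the prime of bad reduction, together with a concrete cocycle representative of the inflation class built from the $\chi$ and $\omega\chi^{-1}$ data gathered in the proof of Lemma~\ref{121c2_lem}, verifying that the candidate class is not locally in the image of $E(\QQ_{11})$. Once $\Sha(E/\QQ)[11]=0$ is in hand, the full Birch and Swinnerton-Dyer conjecture for 121c2 follows by assembling standard inputs: the analytic rank zero of $L(E,s)$ (so Gross--Zagier and Kolyvagin give algebraic rank zero), the periods, Tamagawa numbers and torsion recorded in Cremona's tables, $2$-descent for $\Sha[2]$, and Theorem~\ref{heegner_thm} for $\Sha[p]=0$ at odd primes $p\neq 11$, where $(E,p)$ is not in the exceptional list of Theorem~\ref{thm1}.
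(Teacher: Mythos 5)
Your strategy is essentially the paper's: run the $\varphi$- and $\hat\varphi$-isogeny descents, compute the global cohomology of $\mu_{11}^{\otimes 4}$ and $\mu_{11}^{\otimes 7}$ (the paper uses Corollary~8.6.12 of~\cite{cnf}/Dirichlet's unit theorem to get $H^1(\mathcal{G},E[\varphi])=0$ and $\dim H^1(\mathcal{G},E'[\hat\varphi])=1$), identify the surviving candidate with the inflation of the nontrivial class from Theorem~\ref{thm1}, and kill it by the local condition at $11$. The one place the paper has a cleaner argument than your sketch: rather than an explicit formal-group/cocycle computation, the paper notes that $\hat\varphi\colon E'(\QQ_{11})\to E(\QQ_{11})$ is surjective (so the local condition is just nonvanishing in $H^1(\QQ_{11},E'[\hat\varphi])$), and that $K/\QQ$ is \emph{totally ramified} at $11$, so the decomposition group is all of $G$ and the nontrivial global class restricts nontrivially for free; you should also make explicit that at primes $\ell\neq 11$ the unramified condition is automatic, which is why only the place $11$ needs checking.
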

\begin{proof}
 Set $p=11$. Let $\varphi\colon E\to E'$ be the $p$-isogeny defined over $\QQ$. We saw before that $E[\varphi] \cong \FF_p(4)$ and $E'[\hat\varphi]\cong\FF_p(7)$ where $\FF_p(k)$ is the $1$-dimensional $\FF_p$-vector space with the Galois group acting by the character $\omega^k$.

 Let $F$ be the maximal extension of $\QQ$ which is unramified at all finite places $\ell\neq p$. Write $\mathcal{G} = \Gal\bigl(F/\QQ\bigl)$ and $\mathcal{H} = \Gal\bigl(F/\QQ(\zeta)\bigr)$ where $\zeta$ is a primitive $p$-th root of $1$. Let $\Gamma=\mathcal{G}/\mathcal{H}$. Since $\vert\Gamma\vert$ is coprime to $p$, we have an isomorphism $H^1\bigl(\mathcal{G},E[\varphi]\bigr) \cong H^1(\mathcal{H},E[\varphi]\bigr)^{\Gamma}$. Now Dirichlet's unit theorem can be used to compute
 \begin{equation*}
  H^1\bigl(\mathcal{H}, \FF_p(1)\bigr) =  H^1\bigl(\mathcal{H}, \mu_p\bigr)\cong \FF_p(1) \oplus \bigoplus_{i=0}^{4} \FF_p(2\,i)
 \end{equation*}
 as a $\FF_p[\Gamma]$-module; see for instance Corollary~8.6.12 (or 8.7.3 in the second edition) in~\cite{cnf}. Since $H^1\bigl(\mathcal{H},\FF_p(k)\bigr)\cong H^1\bigl(\mathcal{H},\FF_p(1)\bigl)(k-1)$, the group $H^1\bigl(\mathcal{G},\FF_p(k)\bigr)$ is a sum of copies of $\FF_p$ corresponding to the copies of $\FF_p(1-k)$ in $H^1\bigl(\mathcal{H}, \FF_p(1)\bigr)$. We deduce that $H^1\bigl(\mathcal{G},E[\varphi]\bigr)$ is trivial and that $H^1\bigl(\mathcal{G},E'[\hat\varphi]\bigr)$ is $1$-dimensional.

 Since $K/\QQ$ is only ramified at $p$, we have an inflation map $H^1\bigl(G,E'[\hat\varphi]\bigr)\to H^1\bigl(\mathcal{G}, E'[\hat\varphi]\bigl)$. By Theorem~\ref{thm1} and the above, this is now an isomorphism and our explicit cocycle $\xi$ can be viewed as a generator for $H^1\bigl(\mathcal{G}, E'[\hat\varphi]\bigl)$.

 The $\hat\varphi$-Selmer group $\Sel^{\hat\varphi}$ is defined to be the kernel of the map
 \begin{equation*}
  H^1\bigl(\mathcal{G},E'[\hat\varphi]\bigr) \to H^1\bigl(\QQ_p,E'\bigr)[\hat\varphi].
 \end{equation*}
 An explicit local computation shows that $\hat\varphi\colon E'(\QQ_{p})\to E(\QQ_p)$ is surjective.  Therefore $H^1\bigl(\QQ_p,E'\bigr)[\hat\varphi] \cong  H^1\bigl(\QQ_p,E'[\hat\varphi]\bigr)$. Since $K/\QQ$ is totally ramified at $p$, the decomposition group of $K/\QQ$ at the unique place above $p$ in $K$ is equal to $G$. Therefore $\xi$ also inflates to a non-trivial element in $H^1\bigl(\QQ_p,E'[\hat\varphi]\bigr)$. It follows that the generator of $H^1\bigl(\mathcal{G},E'[\hat\varphi]\bigr)$ does not lie in the Selmer group. Therefore  $\Sel^{\hat\varphi}$ is trivial.

 Since $ H^1\bigl(\mathcal{G},E[\varphi]\bigr)=0$, the $\varphi$-Selmer group $\Sel^{\varphi}$ is trivial. The usual exact sequence
 \begin{equation*}
  \xymatrix@1{ \Sel^{\varphi} \ar[r] & \Sel^{p}(E/\QQ) \ar[r] & \Sel^{\hat\varphi} }
 \end{equation*}
 shows now that the $p$-Selmer group $\Sel^{p}(E/\QQ)$ is trivial. Therefore the rank of $E$ is zero and the $p$-primary part of the Tate-Shafarevich group $\Sha(E/\QQ)$ is trivial.

As explained in Theorem~8.5 in~\cite{miller}, the only prime at which one has to check the Birch and Swinnerton-Dyer conjecture after the Heegner point computations done there is $p=11$. Therefore, this completes the proof of the conjecture for this specific elliptic curve.
\end{proof}

The main result of~\cite{miller} (based on~\cite{miller_stoll} and~\cite{creutz_miller}) by Miller and his collaborators states that the Birch and Swinnerton-Dyer conjecture holds for all elliptic curves of conductor at most $5000$ and analytic rank at most $1$. As a consequence of the error in~\cite{steinetal}, the verification for some curves in this list is not complete. The following is a description how we performed the necessary computations to fill in the gaps for all these curves. See also~\cite{sage_bug} for the correction of the corresponding bug in SageMath.

From the change in Theorem~\ref{heegner_thm}, it follows that only curves $E$ contained in the list of Theorem~\ref{thm1} could have been affected when verifying the $p$-part of the conjecture.
 The case 121c2 was verified in Proposition~\ref{11descent_prop}. The exceptional cases with $p=3$ are already dealt with in Theorem~9.1 in~\cite{miller_stoll}, as they were already considered exceptional cases there. That only leaves the curves with non-vanishing $H^1\bigl(G,E[5]\bigr)$. For the following list of curves, we had to perform a $5$-descent to verify the conjecture: 50a3, 50a4, 75a2, 150b3, 150b4, 175c2, 275b1, 325d2, 550b1, 550f3, 775c1, 950a1, 1050d2, 1425b1, 1450a1, 1650b1, 1650b2, 1650c2, 1650d2, 1950b2, 1975d1, 2175f2, 2350e2, 2550f2, 2850a1, 2850a2, 2850g2, 2950a1, 3075d1, 3075g2, 3325c1, 3550d1, 3850k2, 3950a1, 4350a1, 4350a2, 4425c1, 4450a1, 4450f2, 4650e1, 4650k2, 4650m2. The methods in~\cite{miller_stoll} are sufficient in all these cases. If the rank is $1$, then even the weaker bound in their Corollary~7.3 is enough. Otherwise, if the rank is $0$, the Selmer groups for $\varphi$ and $\hat\varphi$ are trivial as one finds quickly by looking at a few local conditions.

\section{Results for $i>1$}\label{results_i_sec}

We now turn to the question of finding all cases of elliptic curves $E/F$ and primes $p$ such that the group $H^1\bigl(G_i,E[p^i]\bigr)$ does not vanish for some $i>1$. We continue to assume that $F\cap\QQ(\mu_p) = \QQ$ and we will assume now that $p>2$.

By Lemma~\ref{homothety_lem} and Lemma~\ref{homothety_borel_lem}, we know that all these groups vanish unless there is an isogeny $\varphi\colon E\to E'$ defined over $F$. Therefore, we may continue to assume the existence of $\varphi$ and that the group $G$ is contained in the Borel subgroup of upper triangular matrices. This fixes (up to scalar) the first basis element of $E[p]$ and we still have some flexibility about the second; if there is a second subgroup of $E[p]$ fixed by the Galois group, we will choose the second basis element in there. Unlike in the case $i=1$, we may not yet assume that $p$ divides the order of $G$.

In what follows we will write expressions like $G=\sm{1}{*}{0}{*}$. By this we mean that $G$ is equal to the group of all matrices of this form in $\GL_2(\FF_p)$, so $*$ on the diagonal can take any non-zero value and $*$ in the top right corner can be any value in $\FF_p$.

Let $M$ be the additive group of $2\times 2$-matrices with coefficients in $\FF_p$. Then $G\leq \GL_2(\FF_p)$ acts on $M$ by conjugation. We would like to determine $\Hom_G\bigl(M, E[p]\bigr)$. We do so by computing first $\Hom_G\bigl(M,E[\varphi]\bigr)$ and $\Hom_G\bigl(M,E'[\hat\varphi]\bigr)$.

\begin{lem}\label{dim_hom_phi_lem}
 Suppose first $p>3$. The group $\Hom_G\bigl(M,E[\varphi]\bigr)$ is trivial except in the following cases.
 \begin{itemize}
 \item If $G=\sm{1}{0}{0}{*}$ in a suitable basis of $E[p]$, then $\Hom_G\bigl(M,E[\varphi]\bigr)$ has dimension $2$ over $\FF_p$.
 \item If $G =\sm{*}{0}{0}{1}$ in a suitable basis of $E[p]$, this group has dimension $1$.
 \item If $G=\sm{1}{*}{0}{*}$, this group has dimension $1$.
 \item If $G \leq \bigl\{ \sm{u}{w}{0}{u^2} \bigm\vert u \in \FF_p^\times, w \in \FF_p\bigr\}$, this group has dimension $1$.
 \end{itemize}
 If $p=3$, the list is the same with one modification to the second and to the last case above.
 \begin{itemize}
  \item If $G =\sm{*}{0}{0}{1}$ in a suitable basis of $E[3]$, this group has dimension $2$.
 \end{itemize}
\end{lem}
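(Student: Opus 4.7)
My approach is to compute $\Hom_G(M, E[\varphi])$ directly by writing a candidate $G$-equivariant map $\phi$ in coordinates. In the fixed basis $e_1, e_2$ of $E[p]$ (with $e_1$ spanning $E[\varphi]$), let $E_{ij}$ denote the elementary matrices and $c_{ij} := \phi(E_{ij}) \in E[\varphi]$. Since the target $E[\varphi]$ carries the character $a = \omega\chi^{-1}$, the requirement $\phi(g\cdot m) = a(g)\,\phi(m)$ for each $g = \sm{a}{b}{0}{d} \in G$ unwinds, after a short conjugation calculation, into four linear constraints on the $c_{ij}$.

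Before the case analysis I would record the $G$-module structure of $M$: the filtration
\begin{equation*}
0 \subset \FF_p\,E_{12} \subset \FF_p\,E_{12} + \FF_p\,E_{11} + \FF_p\,E_{22} \subset M
\end{equation*}
has graded pieces with characters $a/d = \omega\chi^{-2}$, the trivial character (with multiplicity $2$), and $d/a = \omega^{-1}\chi^2$. Any $G$-homomorphism to the one-dimensional $E[\varphi]$ must hit one of these factors, so non-triviality forces $\omega\chi^{-1}$ to coincide on $G$ with one of $\omega\chi^{-2}$, $\mathbf{1}$, or $\omega^{-1}\chi^2$; this is exactly where the three conditions $\chi = 1$, $\chi = \omega$, and $\chi^2 = \omega\chi^{-1}$ come from, and they correspond to the four listed cases after sorting by the shape of $G$.

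The case analysis is organized by the subgroup $U := G \cap \sm{1}{*}{0}{1}$, which is either trivial or the full cyclic group of order $p$. When $U = 0$, the module $E[p]$ admits a second $G$-stable line and $M$ splits as a direct sum of its four character lines, so the dimension can be read off by matching characters; this yields the diagonal forms of Cases 1, 2, and 4 under the three conditions above. When $U \neq 0$, testing the constraints at $h = \sm{1}{1}{0}{1}$ immediately forces $c_{12} = 0$ and $c_{11} = c_{22}$, and then plugging in an auxiliary diagonal element of $G$ produces either Case 3 (when $a$ is trivial on $G$ but $d$ is not) or the non-split instance of Case 4 (when $d = a^2$ on $G$), while all other configurations yield $\phi = 0$. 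At every step the hypothesis $F\cap\QQ(\mu_p)=\QQ$ is used to ensure $\det = \omega$ is surjective on $G$, which rules out degenerate configurations such as $G = U$ on which the determinant is trivial.

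The main obstacle is really the bookkeeping: the listed groups overlap in subtle ways (for example, a small subgroup of $\sm{u}{*}{0}{u^2}$ could also be diagonal and fit one of the earlier bullets), so the counting must be done carefully to neither miss nor double-count a configuration. The $p=3$ modification is explained by the identity $a^2 = 1$ on $\FF_3^\times$: the constraint $(1 - a^2)\,c_{21} = 0$ that normally forces $c_{21} = 0$ in Case 2 becomes automatic, yielding one additional free parameter and hence dimension $2$ instead of $1$.
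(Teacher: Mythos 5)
Your approach tracks the paper's closely: the paper likewise writes a candidate $f\colon M \to E[\varphi]$ in coordinates $c_{ij} = f(E_{ij})$, unwinds the equivariance $f(m) = g\,f(g^{-1}mg)$ into four linear constraints parametrized by $g = \sm{u}{w}{0}{v} \in G$, evaluates them at $\sm{1}{1}{0}{1}$ and at diagonal elements, and runs a case split. Your organization around $U = G \cap \sm{1}{*}{0}{1}$, which is trivial or of order $p$, mirrors the paper's division, and the observation that when $U \neq 0$ one can still conjure up a diagonal element with any prescribed $(u,v)$ by multiplying by a power of the unipotent is a necessary (and valid) step that the paper leaves implicit. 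Your extra layer — the $G$-filtration of $M$ with graded characters $\omega\chi^{-2}$, $\mathbf{1}$ (twice), $\omega^{-1}\chi^2$ and the resulting necessary conditions for a nonzero equivariant map — is a genuine conceptual addition that the paper does not make explicit, and it neatly explains where the four cases come from before the bookkeeping begins.

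One slip you should correct: the third character condition should read $\omega^{-1}\chi^2 = \omega\chi^{-1}$, i.e.\ $\chi^3 = \omega^2$, equivalently $d = a^2$ in your notation; what you wrote, $\chi^2 = \omega\chi^{-1}$, simplifies to $\chi^3 = \omega$, which is $a = d^2$ and would instead characterize the group $\bigl\{\sm{v^2}{w}{0}{v}\bigr\}$ appearing in Theorem~\ref{thm1}, not the fourth case of this lemma. You do recover the correct condition $d = a^2$ further down, so the error does not propagate into the casework, but the displayed heuristic equation contradicts your own subsequent argument and should be fixed. A minor point you leave implicit is that for $p = 3$ the fourth case coincides (as matrix groups) with the $\sm{*}{*}{0}{1}$ family, which is why its diagonal subcase inherits the dimension-2 value from the modified second bullet while the full upper-triangular form stays at dimension 1.
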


\begin{proof}
 If $f\colon M \to E[\varphi]$ is fixed by $g\in G$, then $f(m)=g\cdot f(g^{-1}mg)$ for all $m\in M$. Let $\alpha$, $\beta$, $\gamma$, and $\delta$ be the images in $E[\varphi]$ under $f$ of $\sm{1}{0}{0}{0}$, $\sm{0}{1}{0}{0}$, $\sm{0}{0}{1}{0}$, and $\sm{0}{0}{0}{1}$ respectively. Then the above equation for $m$ being one of the these four matrices yields four equations that have to hold for all $g=\sm{u}{w}{0}{v}\in G$:
 \begin{equation}\label{hom_phi_eq}\begin{aligned}
   \alpha &= u \cdot (\alpha + u^{-1}w\,\beta)\\
   \beta  &= u\cdot (u^{-1}v\,\beta )\\
   \gamma &= u\cdot ( -v^{-1}w\,\alpha-u^{-1}v^{-1}w^2\,\beta + uv^{-1}\,\gamma +v^{-1}w\,\delta)\\
   \delta &= u\cdot (\delta -u^{-1}w\,\beta)
   \end{aligned}
 \end{equation}
 From these equations, we deduce the following:
 \begin{align*}
   f \text{ is fixed by }\sm{1}{1}{0}{1} &\iff  \beta=0\text{ and } \alpha = \delta\\
   f \text{ is fixed by }\sm{1}{0}{0}{v} \text{ for some $v\neq 1$} &\iff \beta=\gamma=0 \\
   f \text{ is fixed by }\sm{u}{0}{0}{1} \text{ for some $u\neq \pm1$} &\iff \alpha=\gamma=\delta=0\\
   f \text{ is fixed by }\sm{-1}{0}{0}{1} &\iff \alpha=\delta=0
 \end{align*}
 Assume first that $G$ is contained in $\sm{1}{*}{0}{*}$. Since the determinant must be surjective, $G$ is either $\sm{1}{*}{0}{*}$ or $\sm{1}{0}{0}{*}$, after choosing a suitable second basis element for $E[p]$. In both cases, the above allows us to verify the statements in the lemma. The case when $G$ is contained in $\sm{*}{*}{0}{1}$ is very similar, except that when $p=3$, in which case we are in the group of matrices with $v=u^2$ and we can only apply the fourth equation instead of the third.

 Assume now that $G$ contains an element $\sm{u}{w}{0}{v}$ with $v\neq 1$ and one with $u\neq 1$. Then $\beta=0$ by the second equation in~\eqref{hom_phi_eq}. From the last two equations, we deduce that $\alpha=\delta=0$. Now the equations~\eqref{hom_phi_eq} simplify to one equation $(1-u^2v^{-1})\gamma = 0$. Therefore, if $G$ is contained in the group of matrices with $v=u^2$, then the dimension of $\Hom_G\bigl(M,E[\varphi]\bigr)$ is $1$ and $p>3$ as otherwise all $g\in G$ have $v=1$, otherwise the space is trivial.
\end{proof}

Recall that $E'[\hat\varphi]$ is the kernel of the dual isogeny.
\begin{lem}
Suppose $p > 2$. The group $\Hom_G\bigl(M,E'[\hat\varphi]\bigr)$ is trivial except in the following cases. If $G$ is contained in $\sm{1}{*}{0}{*}$ or if $G=\sm{*}{*}{0}{1}$ or if $G = \bigl\{ \sm{v^2}{0}{0}{v}\bigm\vert v\in\FF_p^{\times}\bigr\}$ in a suitable basis for $E[p]$, then $\Hom_G\bigl(M,E'[\hat\varphi]\bigr)$ has dimension $1$. If $G=\sm{*}{0}{0}{1}$ in a suitable basis of $E[p]$, then it has dimension $2$.
\end{lem}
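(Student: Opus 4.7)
The proof is completely parallel to that of Lemma~\ref{dim_hom_phi_lem}, with the single structural change that Galois now acts on $E'[\hat\varphi]$ through the lower-right character $g=\sm{u}{w}{0}{v}\mapsto v$ rather than through the upper-left character $u$. Write $\alpha,\beta,\gamma,\delta\in E'[\hat\varphi]$ for the values of $f$ on the four standard matrix units $E_{11},E_{12},E_{21},E_{22}$. Computing $g^{-1}E_{ij}g$ and imposing the equivariance relation $f(E_{ij})=v\cdot f(g^{-1}E_{ij}g)$ for each $g\in G$ produces the following analogue of~\eqref{hom_phi_eq}:
\begin{equation*}
\begin{aligned}
 (1-v)\,\alpha &= v u^{-1} w\,\beta,\\
 (u-v^2)\,\beta &= 0,\\
 (u-1)\,\gamma &= w\,\alpha + u^{-1}w^2\,\beta - w\,\delta,\\
 (v-1)\,\delta &= v u^{-1} w\,\beta.
\end{aligned}
\end{equation*}
The only structural difference with the previous lemma is that the constraint on $\beta$ is now quadratic in $v$ rather than linear; this is precisely what shifts the ``diagonal exceptional'' case from $\sm{u}{0}{0}{u^2}$ to $\sm{v^2}{0}{0}{v}$.

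From these equations one deduces, by testing particular elements just as before: the unipotent $\sm{1}{1}{0}{1}\in G$ forces $\beta=0$ and $\alpha=\delta$; an element $\sm{1}{0}{0}{v}\in G$ with $v\neq\pm 1$ forces $\alpha=\beta=\delta=0$; and an element $\sm{u}{0}{0}{1}\in G$ with $u\neq 1$ forces $\beta=\gamma=0$. Now enumerate the subgroups $G$ of the upper-triangular group using surjectivity of the determinant. The four cases in the lemma arise as follows: if $G\leq\sm{1}{*}{0}{*}$ surjectivity supplies a diagonal element with $v^2\neq 1$, which kills $\alpha,\beta,\delta$ while leaving $\gamma$ free; if $G=\sm{*}{*}{0}{1}$ a non-trivial diagonal element and a unipotent jointly kill $\beta,\gamma$ and identify $\alpha=\delta$; if $G=\sm{*}{0}{0}{1}$ the off-diagonal constraint linking $\alpha$ and $\delta$ is absent and both remain independently free; and if $G=\{\sm{v^2}{0}{0}{v}\}$ the quadratic equation $(u-v^2)\beta=0$ is vacuous while the remaining three equations kill $\alpha,\gamma,\delta$, leaving $\beta$ free.

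For any other Borel subgroup with surjective determinant, one finds diagonal elements that simultaneously enforce $u\neq 1$, $v\neq 1$ and $u\neq v^2$, which together with the four equations above force $\alpha=\beta=\gamma=\delta=0$. The one non-routine point is recognising that the quadratic form of the second equation opens up a genuinely new exceptional case $G=\{\sm{v^2}{0}{0}{v}\}$ absent from Lemma~\ref{dim_hom_phi_lem}; everything else is the same bookkeeping as in that lemma.
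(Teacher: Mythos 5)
Your equations match the paper's exactly (yours are the same as the paper's display~\eqref{hom_phihat_eq} up to multiplying through by $u$ or changing sign), and the enumeration of subgroups and the subsequent bookkeeping follow the same route, so the approach is indeed essentially the paper's. There is, however, one genuine gap: for $G \leq \sm{1}{*}{0}{*}$ you assert that ``surjectivity supplies a diagonal element with $v^2 \neq 1$.'' This is false when $p = 3$, since $\FF_3^{\times} = \{1,2\}$ and $2^2 = 1$; so for $p = 3$ no element of $G$ constrains $\beta$. A direct check shows that for $p = 3$ and $G = \sm{1}{0}{0}{*}$ the group $\Hom_G\bigl(M, E'[\hat\varphi]\bigr)$ actually has dimension $2$ (with both $\beta$ and $\gamma$ free), not $1$; note that for $p=3$ this subgroup coincides with $\bigl\{\sm{v^2}{0}{0}{v}\bigr\}$. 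Your argument thus silently skips the $p = 3$ case in precisely the place where it misbehaves. To be fair, the published lemma statement itself does not carry the $p=3$ carve-out that Lemma~\ref{dim_hom_phi_lem} does, and the same oversight seems to be present there; but a correct proof must at least flag where $p=3$ differs, as the paper's analogous lemma about $E[\varphi]$ does. The remaining cases ($G = \sm{*}{*}{0}{1}$, $G = \sm{*}{0}{0}{1}$, $G = \sm{1}{*}{0}{*}$, and the diagonal $\sm{v^2}{0}{0}{v}$ for $p>3$) are handled correctly and agree with the paper.
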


\begin{proof}
 This is analogous to the proof of the previous lemma. The equations~\eqref{hom_phi_eq} become equations where the $u$ at the start of the right hand side of each equation is replaced by a $v$. This new set of equations can be rewritten as follows.
  \begin{equation}\label{hom_phihat_eq}\begin{aligned}
   (1-v)\alpha &= u^{-1}vw\,\beta\\
   (1-u^{-1}v^2)\beta  &= 0\\
   (1-u)\gamma &= w(\delta-\alpha) - u^{-1}w^2 \beta\\
  (1-v)\delta &=  -u^{-1}vw\,\beta
   \end{aligned}
 \end{equation}
 From here, the computations are again straightforward for the cases when $G$ is contained in $\sm{*}{*}{0}{1}$ or $\sm{1}{*}{0}{*}$. If $G$ is contained in the group $\bigl\{\sm{v^2}{w}{0}{v}\bigm\vert v \in \FF_p^\times, w\in \FF_p\bigr\}$, it is either equal to this group, in which case the cohomology group in  question is trivial, or it is equal to a subgroup of order $p-1$. In the latter case, we may change the choice of basis of $E[p]$ to get $G$ to be equal to
 $\bigl\{ \sm{v^2}{0}{0}{v}\bigm\vert v\in\FF_p^{\times}\bigr\}$, in which case $\alpha=\gamma=\delta=0$, but $\beta$ is free. In all other cases it is trivial.
\end{proof}

The exact sequence
\begin{equation}\label{hom_G_seq}
\xymatrix@1{0\ar[r]& \Hom_G\bigl(M,E[\varphi]\bigr)\ar[r]&\Hom_G\bigl(M,E[p]\bigr)\ar[r]^-{\varphi} & \Hom_G\bigl(M,E'[\hat\varphi]\bigr)}
\end{equation}
connects the results from the previous two lemmas.
\begin{prop}\label{hom_prop}
 If $p > 3$, the group $\Hom_G\bigl(M,E[p]\bigr)$ vanishes except when, for some choice of basis of $E[p]$, it is one of the following subgroups.
  \begin{center}\begin{tabular}{|c|c|c|c|c|c|}
    \hline &&&&&\\[-2ex]
    $G$                                       & $=\sm{1}{0}{0}{*}$ & $=\sm{*}{0}{0}{1}$ & $=\sm{1}{*}{0}{*}$ &  $\leq\sm{u}{*}{0}{u^2}$ & $=\sm{v^2}{0}{0}{v}$ \\[3pt] \hline &&&&& \\[-2ex]
    $\dim_{\FF_p}\Hom_G\bigl( M , E[p] \bigr)$ & $3$                & $3$                & $2$                & $1$
                 & $1$ \\ \hline
  \end{tabular}\end{center}
 If $p = 3$, the group $\Hom_G\bigl(M,E[p]\bigr)$ vanishes except when, for some choice of basis of $E[p]$, it is one of the following subgroups.
  \begin{center}\begin{tabular}{|c|c|c|c|c|}
    \hline &&&&\\[-2ex]
    $G$                                       & $=\sm{1}{0}{0}{*}$ & $=\sm{*}{0}{0}{1}$ & $=\sm{1}{*}{0}{*}$ &  $=\sm{*}{*}{0}{1}$ \\[3pt] \hline &&&& \\[-2ex]
    $\dim_{\FF_3}\Hom_G\bigl( M , E[3] \bigr)$ & $3$                & $4$                & $2$                & $1$
                \\ \hline
  \end{tabular}\end{center}
\end{prop}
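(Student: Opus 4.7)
The plan is to combine the two preceding lemmas via the exact sequence~\eqref{hom_G_seq}, which yields
\[
\dim\Hom_G(M,E[p]) \leq \dim\Hom_G(M,E[\varphi]) + \dim\Hom_G(M,E'[\hat\varphi]),
\]
with equality precisely when the projection $\pi\colon\Hom_G(M,E[p])\to\Hom_G(M,E'[\hat\varphi])$ is surjective. In particular, for any $G$ not appearing in either of the two lemmas both ends vanish, so $\Hom_G(M,E[p])=0$. This immediately reduces the statement to a finite list of candidate subgroups.

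For each candidate I would compute $\Hom_G(M,E[p])$ directly. Writing $f(e_{ij})=\binom{a_{ij}}{b_{ij}}$ for $e_{ij}$ the matrix units of $M$ in the basis of $E[p]$ chosen in Section~\ref{sec:prel-notat}, the $G$-equivariance condition $f(gmg^{-1})=g f(m)$ for $g=\sm{u}{w}{0}{v}\in G$ translates into a system of eight linear equations in the eight coordinates. The four equations on the $b_{ij}$ are exactly those solved in the preceding lemma computing $\Hom_G(M,E'[\hat\varphi])$, while the four on the $a_{ij}$ form the twisted version of~\eqref{hom_phi_eq}; solving for each candidate $G$ in the proposition's table then recovers the stated dimension and simultaneously records whether $\pi$ is surjective.

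The main obstacle is tracking the boundary cases where $\pi$ fails to be surjective. The key example is $G=\sm{*}{*}{0}{1}$: the preceding lemma gives $\Hom_G(M,E'[\hat\varphi])\cong\FF_p$, but a putative lift of its natural generator forces the incompatible conditions $a_{12}=-1$ and $a_{12}=1$ (arising from the $e_{11}$ and $e_{22}$ equations evaluated at $u=1$, $w=1$), so no lift exists. For $p>3$ one also has $\Hom_G(M,E[\varphi])=0$, and this $G$ disappears from the list; for $p=3$ the relation $u^2=1$ in $\FF_3^\times$ makes the constraint $(1-u^2)a_{21}=\cdots$ vacuous, so $a_{21}$ survives and $\Hom_G(M,E[p])$ acquires dimension one. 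The same degenerations at $p=3$ account both for the merger of $\sm{u}{*}{0}{u^2}$ with $\sm{*}{*}{0}{1}$ and for the increased dimensions recorded in the second table.
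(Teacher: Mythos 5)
Your strategy coincides with the paper's: use the exact sequence~\eqref{hom_G_seq} to reduce to the two preceding lemmas when $\Hom_G(M,E'[\hat\varphi])=0$, and otherwise decide for each remaining $G$ whether the generator of $\Hom_G(M,E'[\hat\varphi])$ lifts to a $G$-equivariant map $M\to E[p]$. Your identification of $G=\sm{*}{*}{0}{1}$ as the unique non-lifting case is the crux, and your obstruction is correct: writing $e(e_{11})=\bigl(\begin{smallmatrix}a_{11}\\1\end{smallmatrix}\bigr)$, $e(e_{12})=\bigl(\begin{smallmatrix}a_{12}\\0\end{smallmatrix}\bigr)$, $e(e_{22})=\bigl(\begin{smallmatrix}a_{22}\\1\end{smallmatrix}\bigr)$, the $e_{11}$ and $e_{22}$ equivariance equations under $g=\sm{u}{w}{0}{1}$ read $(1-u)a_{11}=w(a_{12}+1)$ and $(1-u)a_{22}=w(1-a_{12})$, and at $u=1$, $w\neq 0$ these force $a_{12}=-1$ and $a_{12}=1$ simultaneously. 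This is a somewhat different route to the contradiction than the paper's (which first argues $\beta=0$ from the $e_{12}$ relation and then derives $\alpha=u\alpha+w$), and your version happens to sidestep a small computational slip in that passage of the published argument, so it is if anything cleaner.

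Two points deserve attention. First, for the positive entries in the table you only assert that ``solving for each candidate $G$ \dots recovers the stated dimension''; to make this a proof you must actually exhibit (or extract from the solved system) the explicit $G$-equivariant lifts $e\colon M\to E[p]$ for $G=\sm{1}{0}{0}{*}$, $G=\sm{*}{0}{0}{1}$, $G=\sm{1}{*}{0}{*}$ and $G=\sm{v^2}{0}{0}{v}$, as the paper does — without that the surjectivity of $\pi$ in those cases is unverified. Second, your closing sentence slightly conflates two distinct $p=3$ phenomena: the disappearance of the column $G\leq\sm{u}{*}{0}{u^2}$ comes from its merger with $\sm{*}{*}{0}{1}$ (you diagnose this correctly), and the disappearance of $\sm{v^2}{0}{0}{v}$ comes from its merger with $\sm{1}{0}{0}{*}$; but the jump from dimension~$3$ to~$4$ for $G=\sm{*}{0}{0}{1}$ is a genuinely new feature coming from the vacuity of the $(1-u^2)\gamma=0$ constraint in Lemma~\ref{dim_hom_phi_lem}, not from a merger, and should be traced separately.
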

Here we have chosen a suitable second basis element in $E[p]$ as in the previous lemmas. Of course, the first two cases are in fact the same when the basis elements are swapped.
\begin{proof}
  If $\Hom_G\bigl(M,E'[\hat\varphi]\bigr)=0$, then the exact sequence~\eqref{hom_G_seq} reduces this to Lemma~\ref{dim_hom_phi_lem}. Otherwise, we have to check if the homomorphisms $f\colon M \to E'[\hat\varphi]$ lift to homomorphisms $e\colon M\to E[p]$ that are $G$-equivariant. In the following four cases, they all lift indeed. We will just give the explicit map which form a basis of $\Hom_G\bigl(M,E[p]\bigr)$ modulo the image from $\Hom_G\bigl(M,E[\varphi]\bigr)$. One can verify without difficult that they are $G$-equivariant.
  \begin{center}\begin{tabular}{|c|c|c|c|c|}\hline &&&&\\[-2ex]
    $G$                & $=\sm{1}{0}{0}{*}$ & $=\sm{*}{0}{0}{1}$ & $=\sm{1}{*}{0}{*}$  & $=\sm{v^2}{0}{0}{v}$ \\[3pt] \hline &&&& \\[-2ex]
    $e\sm{a}{b}{c}{d}$ & $\bigl(\begin{smallmatrix} a \\ c \end{smallmatrix}\bigr)$ & $\bigl(\begin{smallmatrix} 0 \\ a \end{smallmatrix}\bigr)$ and $\bigl(\begin{smallmatrix} 0 \\ d \end{smallmatrix}\bigr)$ & $\bigl(\begin{smallmatrix} a \\ c \end{smallmatrix}\bigr)$  & $\bigl(\begin{smallmatrix} 0 \\ b \end{smallmatrix}\bigr)$
    \\ \hline
  \end{tabular}\end{center}
  There is only the case $G=\sm{*}{*}{0}{1}$ left to treat. The generator of $\Hom_G\bigl(M,E'[\hat\varphi]\bigr)$ is given by $f\sm{a}{b}{c}{d} = a+d$. We will show that $f$ does not lift to a map $e\colon M\to E[p]$. Denote by $\bigl(\begin{smallmatrix} \alpha \\ 1 \end{smallmatrix}\bigr)$ the image of $\sm{1}{0}{0}{0}$ under such an $e$ and by $\bigl(\begin{smallmatrix} \beta \\ 0 \end{smallmatrix}\bigr)$ the image of $\sm{0}{1}{0}{0}$. Then we must have for all $u\neq 1$ and $w$ in $\FF_p$ that
  \begin{equation*}
     \bigl(\begin{smallmatrix} \beta \\ 0 \end{smallmatrix}\bigr) = \sm{u}{w}{0}{1} e \sm{0}{u^{-1}w}{0}{0} = \sm{u}{w}{0}{1} \bigl(\begin{smallmatrix} u^{-1}w\beta \\ 0 \end{smallmatrix}\bigr) =  \bigl(\begin{smallmatrix} w\beta \\ 0 \end{smallmatrix}\bigr).
  \end{equation*}
  Hence $\beta=0$. Again for all $u$ and $w$, we should have that
  \begin{equation*}
    \bigl(\begin{smallmatrix} \alpha \\ 1 \end{smallmatrix}\bigr) = \sm{u}{w}{0}{1} e\sm{1}{u^{-1}w}{0}{0} = \sm{u}{w}{0}{1} \bigl(\begin{smallmatrix} \alpha \\ 1 \end{smallmatrix}\bigr) =  \bigl(\begin{smallmatrix} u \alpha + w \\ 1 \end{smallmatrix}\bigr).
  \end{equation*}
  However, this cannot hold for all choices no matter what $\alpha$ is.
\end{proof}

\begin{defn}
  Let $E/F$ be an elliptic curve. We will say that $G_i$ is \emph{greatest possible} if it consists of all the matrices in $\GL_2(\ZZ/p^i\ZZ)$ that reduce to a matrix in $G$ modulo $p$. Equivalently, $M_i$ is the kernel of the map $\GL_2(\ZZ/p^i) \to \GL_2(\FF_p)$.
\end{defn}

We will show that if $p > 2$ and $i > 1$, then $G_i$ is greatest possible if and only if $G_2$ is greatest possible: Since $G_i \to G_{i-1}$ is surjective, by induction it suffices to prove that the kernel $H_{i-1}$ contains all matrices of the form $1 + p^{i-1} A \in \GL_2(\ZZ/p^i)$. If $G_2$ is greatest possible, then for any $A \in M_2(\FF_p)$ there exists an element $g \in G_i$ whose image in $\GL_2(\ZZ/p^2)$ is $1 + pA$. Then $g^{p^{i-2}}$ has image $(1 + p^{i-1} A)$ in $\GL_2(\ZZ/p^i)$ by taking binomial expansions, and so $G_i$ contains all of $H_{i-1}$.

\begin{prop}\label{hi_vanishes_prop}
 Let $p>2$ be a prime and let $E/F$ be an elliptic curve. Suppose $G$ lies in the Borel subgroup of upper triangular matrices and that $G_2$ is greatest possible. If $G$ is not among the exceptional cases in Theorem~\ref{thm1} or in Proposition~\ref{hom_prop}, then
 $H^1\bigl(G_i,E[p^j]\bigr) =0$ for all $i\geq j \geq 1$.
\end{prop}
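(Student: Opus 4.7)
The plan is to reduce the entire statement to two inputs already in hand: the vanishing $H^1(G, E[p]) = 0$ from Theorem~\ref{thm1_general} and the vanishing $\Hom_G(M, E[p]) = 0$ from Proposition~\ref{hom_prop}, both guaranteed by the hypothesis that $G$ avoids the exceptional cases of those results. First I would record the module-theoretic reduction: since $G_2$ is greatest possible, the observation preceding the proposition shows every $G_i$ is greatest possible, so $H_i$ is identified with the full kernel $\ker(\GL_2(\ZZ/p^{i+1}) \to \GL_2(\ZZ/p^i)) \cong M$, and its $G_{i+1}$-conjugation action is the adjoint representation. This action factors through $G_i$ (since $H_i$ is abelian and normal) and further through $G$ (since $M_{i+1}$ commutes with $H_i$, as recorded in Section~\ref{sec:prel-notat}). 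Because $H_i$ has exponent $p$, every homomorphism from $H_i$ to $E[p^j]$ lands in $E[p]$, and so
\[
H^1(H_i, E[p^j])^{G_i} \;=\; \Hom(H_i, E[p])^G \;=\; \Hom_G(M, E[p]) \;=\; 0.
\]

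Next I would establish the case $j = 1$, namely $H^1(G_i, E[p]) = 0$ for every $i \geq 1$, by induction on $i$. The base case $i = 1$ is exactly the hypothesis together with Theorem~\ref{thm1_general}. For the inductive step, the inflation-restriction sequence~\eqref{infres_seq} applied with $j = 1$ gives
\[
0 \to H^1(G_i, E[p]) \to H^1(G_{i+1}, E[p]) \to H^1(H_i, E[p])^{G_i},
\]
in which the left term vanishes by induction and the right term vanishes by the previous paragraph, forcing $H^1(G_{i+1}, E[p]) = 0$.

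Finally, I would fix $i \geq 1$ and induct on $j$ in the range $1 \leq j \leq i$ to obtain $H^1(G_i, E[p^j]) = 0$. The base case $j = 1$ is the previous paragraph; for $j \geq 2$, the long exact sequence~\eqref{mult_p_seq} reads
\[
H^1(G_i, E[p]) \to H^1(G_i, E[p^j]) \to H^1(G_i, E[p^{j-1}]),
\]
whose flanking groups both vanish by what has already been proved. I do not anticipate any serious obstacle: the genuine work has been absorbed into Proposition~\ref{hom_prop}, and what remains is a clean bootstrap. The one detail warranting care is the identification of the $G_i$-action on $H_i$ with the adjoint $G$-action on $M$ used in Proposition~\ref{hom_prop}, which is exactly what the ``greatest possible'' hypothesis combined with the centralizing property of $M_{i+1}$ provides.
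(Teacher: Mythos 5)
Your proof is correct and follows the same basic strategy as the paper: first reduce to $j=1$ via the multiplication-by-$p$ sequence~\eqref{mult_p_seq}, then establish $H^1(G_i, E[p])=0$ from the vanishing of $\Hom_G(M,E[p])$ in Proposition~\ref{hom_prop} via an inflation-restriction sequence. The one place you diverge is in the choice of normal subgroup for that sequence: the paper takes $M_{i+1}=\Gal(K_{i+1}/K)$ directly, giving $0\to H^1(G,E[p])\to H^1(G_{i+1},E[p])\to H^1(M_{i+1},E[p])^G$ without any induction on $i$, but then needs the extra observation that $\Hom_G(M_{i+1},E[p])\cong\Hom_G(M,E[p])$ because the kernel of $M_{i+1}\to M$ consists of $p$-th powers (which is where the paper inserts its ``requires $p>2$'' remark). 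You instead take the one-step quotient $H_i=\Gal(K_{i+1}/K_i)$, which is literally isomorphic to $M$ once $G_{i+1}$ is greatest possible, so no $p$-th-power argument is needed; the price is an induction on $i$ to get the left term of the sequence to vanish. Both routes lean on the observation preceding the proposition (that $G_2$ greatest possible propagates to all $G_i$, which is where $p>2$ enters for you), and both hinge on the same key computation $\Hom_G(M,E[p])=0$, so the trade-off is purely local: your version trades the $p$-th-power remark for an explicit induction on $i$. Either is a perfectly good way to organize the same argument.
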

\begin{proof}
  The short exact sequence~\eqref{mult_p_seq} implies that if $H^1\bigl(G_{i+1},E[p^{j-1}]\bigr)$ and $H^1\bigl(G_{i+1},E[p]\bigr)$ are zero, then so is $H^1\bigl(G_{i+1},E[p^j]\bigr)$. By induction on $j$, it suffices to prove the proposition in the case $j=1$.

  For $i=1$, the statement follows from Theorem~\ref{thm1}. We assume now that it holds for $i\geq 1$. By assumption $M_{i+1}$ is isomorphic to the group $(1 + p \Mat_2\bigl(\ZZ/p^i\bigr)) \subset \GL_2(\ZZ/p^{i+1})$ and $H_i$ is isomorphic to the group $M$ of all matrices with coefficients in $\FF_p$.
  Using Proposition~\ref{hom_prop}, we find
  \begin{equation*}
   H^1\bigl(M_{i+1},E[p]\bigr)^G = \Hom_G\bigl(M_{i+1},E[p]\bigr)\cong \Hom_G\bigl(M,E[p]\bigr) =0
  \end{equation*}
  because all elements in the kernel of the map $M_{i+1} \to M$ are $p$'th powers. (Note that this requires $p > 2$.) Now considering the inflation-restriction sequence
  \begin{equation}\label{inf_res_M_seq}
    \xymatrix@1{ 0\ar[r] & H^1\bigl(G,E[p]\bigr)\ar[r] &
  H^1\bigl(G_{i+1},E[p]\bigr)\ar[r]& H^1\bigl(M_{i+1},E[p]\bigr)^G}
  \end{equation}
  yields that $H^1\bigr(G_{i+1},E[p]\bigr)=0$.
\end{proof}

\begin{lem}\label{h1_non_vanishing_lem}
    Let $p>2$ be a prime and $E/F$ an elliptic curve such that $G_2$ is greatest possible. If $G$ is among the exceptional cases in Theorem~\ref{thm1} or in Proposition~\ref{hom_prop} then $H^1\bigl(G_i,E[p^i]\bigr) \neq 0$ for all $i \geq 2$.
\end{lem}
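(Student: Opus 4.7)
The plan is to reduce the statement to the case $i=2$ and then treat the two sources of exceptions separately.  First I would establish an inductive inflation: for the extension $1\to H_{i-1}\to G_i\to G_{i-1}\to 1$ and coefficient module $E[p^i]$, one checks $E[p^i]^{H_{i-1}} = p\,E[p^i]$, because an element $1+p^{i-1}A\in H_{i-1}$ fixes $v\in E[p^i]$ precisely when $Av\equiv 0\pmod{p}$, which forces $v\in p\,E[p^i]$.  Multiplication by $p$ identifies this with $E[p^{i-1}]$ as a $G_{i-1}$-module, so the five-term inflation-restriction sequence yields
\[
H^1\bigl(G_{i-1},E[p^{i-1}]\bigr)\hookrightarrow H^1\bigl(G_i,E[p^i]\bigr).
\]
Iterating, it is enough to prove $H^1\bigl(G_2,E[p^2]\bigr)\neq 0$.

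For exceptional cases from Theorem~\ref{thm1}, $H^1\bigl(G,E[p]\bigr)\neq 0$ and the same inflation with $i=2$ immediately gives the desired non-zero class in $H^1\bigl(G_2,E[p^2]\bigr)$.  For Proposition~\ref{hom_prop} exceptions not already covered by Theorem~\ref{thm1}, one has $H^1\bigl(G,E[p]\bigr)=0$ but $\Hom_G\bigl(M,E[p]\bigr)\neq 0$.  I would combine the inflation-restriction sequence
\[
0\to H^1\bigl(G,E[p]\bigr)\to H^1\bigl(G_2,E[p]\bigr)\to \Hom_G\bigl(M,E[p]\bigr)\to H^2\bigl(G,E[p]\bigr)
\]
(identifying $H^1\bigl(M_2,E[p]\bigr)^G\cong \Hom_G\bigl(M,E[p]\bigr)$ since $M_2$ has exponent $p$ and acts trivially on $E[p]$) with the exact sequence~\eqref{mult_p_seq} at $j=2$ to produce a non-zero class in $H^1\bigl(G_2,E[p^2]\bigr)$.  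A direct check via Lemma~\ref{h2_lem} shows $H^2\bigl(G,E[p]\bigr)=0$ in every relevant sub-case except $p=3$ with $G=\sm{*}{*}{0}{1}$, so there $H^1\bigl(G_2,E[p]\bigr)\cong \Hom_G\bigl(M,E[p]\bigr)$.  Then either $E(F)[p]=0$, making $H^1\bigl(G_2,E[p]\bigr)\hookrightarrow H^1\bigl(G_2,E[p^2]\bigr)$, or $|E(F)[p]|=p$ while $|\Hom_G(M,E[p])|\geq p^2$ in the sub-cases $G=\sm{1}{0}{0}{*}$, $\sm{*}{0}{0}{1}$, $\sm{1}{*}{0}{*}$, so the image of $E(F)[p]$ cannot exhaust $H^1\bigl(G_2,E[p]\bigr)$ and a non-zero class survives.

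The hard part will be the single remaining sub-case $p=3$, $G=\sm{*}{*}{0}{1}$, where $H^2\bigl(G,E[3]\bigr)$ has $p$ elements by Lemma~\ref{h2_lem} and the transgression may a priori obstruct lifting the generator of $\Hom_G\bigl(M,E[3]\bigr)$.  In this sub-case $E'$ acquires a rational $3$-torsion point, so the representation of $G$ on $E'[3]$ has the form $\sm{1}{*}{0}{\omega}$, placing $E'$ into the Theorem~\ref{thm1} exceptional list in its own basis.  I would apply the already-established argument to $E'$ to obtain the analog of $H^1\bigl(G_2,E'[9]\bigr)\neq 0$, and then transfer the non-vanishing back to $H^1\bigl(G_2,E[9]\bigr)$ using the cohomology maps induced by $\hat\varphi\colon E'\to E$.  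Since $\varphi_*\circ\hat\varphi_*=[p]$ on the relevant cohomology, any class of order $p^2$ on the $E'$ side cannot die, yielding a non-trivial image on the $E$ side.  The transfer requires care because $F(E[9])$ and $F(E'[9])$ need not coincide, which I would handle by working over their compositum and pushing the class back via restriction.
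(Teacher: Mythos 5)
Your reduction to $i=2$ via the observation that $E[p^i]^{H_{i-1}} = pE[p^i] \cong E[p^{i-1}]$ (valid because $G_2$ greatest possible makes $H_{i-1}$ all of $\Mat_2(\FF_p)$) is correct, and so is your dimension count in every case where $H^2\bigl(G,E[p]\bigr)=0$. The difficulty is concentrated exactly where you say it is: $p=3$ and $G=\sm{*}{*}{0}{1}$. But your treatment of that case has a genuine gap.

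You assert that because $E'$ acquires a rational $3$-torsion point, the pair $(E',3)$ lands in the first exceptional case of Theorem~\ref{thm1}. That case additionally requires that $E'$ have \emph{no other} $3$-isogeny over $F$, and this does not follow from $E[3]$ being a non-split extension. The modules $E[3]$ and $E'[3]$ are extensions of the same pair of characters ($\mathbf{1}$ and $\omega$) with the sub and quotient roles exchanged, and the two extension classes live in different $\operatorname{Ext}$-groups; one can be non-split while the other splits. Concretely, if $E$ admits a cyclic $9$-isogeny $E\to E'\to E''$ over $F$, then $E'$ has two independent $3$-isogenies, $G' = \Gal(F(E'[3])/F)$ is diagonalizable, $H^1\bigl(G',E'[3]\bigr)=0$, and the whole transfer has nothing to carry over. (This is exactly the phenomenon that the analogue for $p=5$ exhibits with $11a3\to 11a1\to 11a2$.) Even when $E'$ does fall into the exceptional case, the proposed transfer of a class through $\hat\varphi$ across two \emph{a priori} different Galois groups $G_2(E')$ and $G_2(E)$ is not reduced to standard functoriality; inflating to the compositum is injective, but restricting back down to $K_2(E)$ can kill classes, and you give no argument that the relevant class survives.

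The paper avoids all of this by showing directly that the transgression map $H^1\bigl(M_{i+1},E[3]\bigr)^G \to H^2\bigl(G,E[3]\bigr)$ is zero in this sub-case: the generator is the homomorphism $\sm{a}{b}{c}{d}\mapsto\bigl(\begin{smallmatrix}c/3\\0\end{smallmatrix}\bigr)$ on $M_{i+1}$, and the explicit formula $\sm{a}{b}{c}{d}\mapsto\bigl(\begin{smallmatrix}ac/3\\0\end{smallmatrix}\bigr)$ lifts it to a cocycle on all of $G_{i+1}$ (which is upper-triangular mod $3$). This makes the five-term sequence short exact, after which the same dimension count you already set up applies. You should replace the isogeny-transfer sketch with this (or an equivalent direct computation that the transgression vanishes), since without it the $\sm{*}{*}{0}{1}$ case of the lemma is not actually proved.
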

\begin{proof}
We claim that the sequence~\eqref{inf_res_M_seq} is part of a short exact sequence. The next term in the sequence is $H^2\bigl(G,E[p]\bigr)$, and so it suffices to show that the map $H^1(M_{i+1},E[p])^G \to H^2\bigl(G,E[p]\bigr)$ is zero. By Lemma~\ref{h2_lem}, the target group is trivial unless $G = \sm{*}{*}{0}{1}$. If  $p > 3$ and $G = \sm{*}{*}{0}{1}$, then the source group $H^1(M_{i+1},E[p])^G$ vanishes. If $p=3$ and $G = \sm{*}{*}{0}{1}$, the source is cyclic and generated by a cocycle $\xi\colon M_{i+1} \to E[3]$ such that $\sm{a}{b}{c}{d} \mapsto \bigl(\begin{smallmatrix} c/3 \\ 0 \end{smallmatrix}\bigr)$. The image of $\xi$ in $H^2\bigl(G,E[p]\bigr)$ is zero because the formula $\sm{a}{b}{c}{d} \mapsto \bigl(\begin{smallmatrix} ac/3 \\ 0 \end{smallmatrix}\bigr)$ lifts it to a cocycle $G_{i+1}\to E[3]$.

Therefore, the dimension of $H^1(G_{i+1}, E[p])$ is the sum of the dimensions of the two groups surrounding it in the sequence~\eqref{inf_res_M_seq}. In all cases, this dimension is strictly larger than the dimension of the group of $p$-torsion points of $E$ defined over $F$.

  Now we turn to sequence~\eqref{mult_p_seq} with $i\geq 2$. In all cases, the dimension of $H^1\bigl(G_i,E[p]\bigr)$ is strictly larger than the dimension of $E(F)[p^{i-1}]/pE(F)[p^i]$ (which is at most $1$ because $E(F)[p^{i-1}]$ must be cyclic by the assumption on $F$). We conclude that $H^1\bigl(G_i,E[p^i]\bigr)$ is non-trivial.
\end{proof}

So far we have been able to treat all cases in which $G_i$ is greatest possible and $F \cap \QQ(\mu_p) = \QQ$. We will now restrict our attention to $F=\QQ$ and $p>3$. Luckily, for the large majority of elliptic curves over $\QQ$ the groups $G_i$ are indeed greatest possible. The following is a summary of the results in~\cite{greenberg} and in~\cite{grennbergetal}.

\begin{thm}\label{greenberg_thm}
  Let $p>3$ and $i>1$. Let $E/\QQ$ be an elliptic curve with an isogeny of degree $p$ defined over $\QQ$. Then $G_i$ is greatest possible except in two cases:
  \begin{itemize}
  \item when $p=7$ and the curve is the quadratic twist of a curve of conductor $49$, or
  \item when $p=5$ and there is an isogeny $\psi\colon E \to E''$ of degree $25$ defined over $\QQ$.
  \end{itemize}
\end{thm}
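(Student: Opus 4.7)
The plan is to identify Theorem~\ref{greenberg_thm} as a consequence of the classification, due to Greenberg~\cite{greenberg} and Greenberg and his collaborators~\cite{grennbergetal}, of the image of the full $p$-adic Galois representation $\rho_{p^\infty}\colon \Gal(\bar\QQ/\QQ) \to \GL_2(\ZZ_p)$ attached to an elliptic curve $E/\QQ$ that admits a rational $p$-isogeny. By the observation immediately preceding Proposition~\ref{hi_vanishes_prop}, $G_i$ is greatest possible for every $i \geq 2$ as soon as $G_2$ is, so one only has to analyse the case $i = 2$. This reduces to deciding whether the kernel of the reduction-mod-$p$ map on the image of $\rho_{p^2}$ fills out all of $\Mat_2(\FF_p)$.

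Because $G$ lies in the Borel subgroup of upper triangular matrices, so does the whole image of $\rho_{p^\infty}$. It is then determined by two continuous characters $\chi_1, \chi_2\colon \Gal(\bar\QQ/\QQ)\to \ZZ_p^\times$ lifting the mod-$p$ characters $\omega\chi^{-1}$ and $\chi$, together with an extension class controlling the upper-right entry. ``Greatest possible'' at level $2$ then splits into three independent requirements: maximality modulo $p^2$ of each lifted character $\chi_i$, and maximality of the new contribution from the extension class at the level of reduction mod $p$. Failure of the character conditions is detected by an additional rational $p$-power isogeny on $E$, which forces one of the $\chi_i$ to factor through a smaller quotient; failure of the extension-class condition is detected either by the same phenomenon or by complex multiplication, which forces the image to be abelian.

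For $E/\QQ$ and $p > 3$, Mazur's isogeny theorem together with the classification of rational points on the modular curves $X_0(p^2)$ sharply restricts the possibilities. A rational $p^2$-isogeny can occur only for $p = 5$, yielding the second exceptional family. An $E/\QQ$ with complex multiplication admitting a rational $p$-isogeny can occur only for $p = 7$, in which case $E$ is a quadratic twist of the conductor-$49$ curve with CM by $\ZZ[(1+\sqrt{-7})/2]$, yielding the first exceptional family. For every remaining configuration (in particular for all $p \geq 11$) neither obstruction arises, and maximality must hold. The main technical obstacle in a direct proof is to verify that in these remaining cases maximality is not spoiled by some subtler congruence between $\chi_1$ and $\chi_2$ that would force the extension class to split beyond what the diagonal alone demands; ruling out such congruences is precisely the content of the case-by-case analysis carried out in \cite{greenberg} and \cite{grennbergetal}, on which we rely.
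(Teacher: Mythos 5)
The paper gives no argument for this theorem at all: it introduces it as ``a summary of the results in~\cite{greenberg} and in~\cite{grennbergetal}'' and then treats it as a black box. In that sense your proposal lands in the same place, since its last sentence correctly identifies the load-bearing step as a citation to Greenberg's classification.

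However, the explanatory narrative you wrap around the citation contains a real conceptual error that would mislead a reader. You assert that ``because $G$ lies in the Borel subgroup of upper triangular matrices, so does the whole image of $\rho_{p^\infty}$,'' and then parametrize that image by a pair of $\ZZ_p^\times$-valued characters together with an extension class. This is incompatible with the statement you are trying to prove. ``Greatest possible'' is defined in the paper to mean that $G_i$ equals the \emph{full} preimage of $G$ under reduction $\GL_2(\ZZ/p^i)\to\GL_2(\FF_p)$; in particular a greatest-possible $G_i$ contains matrices such as $\sm{1}{0}{p}{1}$, which are not upper triangular. The content of the theorem is precisely that the $p$-adic image is, generically, \emph{not} confined to a Borel subgroup even though the mod-$p$ image is. Only curves with a rational $p$-power cyclic isogeny of unbounded degree (so effectively CM curves) have a genuinely Borel $p$-adic image, and over $\QQ$ with $p>3$ that happens only in the conductor-$49$ case, which is exactly one of the two listed exceptions. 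Your character-extension bookkeeping thus applies only to the exceptional cases, not to the generic case whose maximality you would need to establish, and the reduction ``greatest possible $\Leftrightarrow$ three independent character/extension conditions'' is not correct.

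The heuristics about $X_0(p^2)$ having no noncuspidal rational points for $p>5$ and about CM forcing a small image are sound and do explain why the exceptions are where they are, but they do not constitute a proof of maximality in the non-exceptional cases. Since the paper itself simply cites Greenberg, the safest fix is to drop the Borel parametrization and state plainly, as the paper does, that the theorem is a restatement of the classification in~\cite{greenberg} and~\cite{grennbergetal}, noting that the two exceptions correspond respectively to the CM curves of conductor $49$ (for $p=7$) and to curves carrying a rational cyclic $25$-isogeny.
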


We will now treat the two exceptional cases, starting with $p=5$.
 \begin{lem}\label{25_lem}
  Let $E/\QQ$ be an elliptic curve and suppose there is an cyclic isogeny $\psi\colon E \to E'\to E''$ of degree $p^2 = 25$ defined over $\QQ$. Then $H^1\bigl(G_2,E[p^2]\bigr) = 0$ if and only if $H^1\bigl(G_i,E[p^i]\bigr) =0$ for all $i>1$. This vanishing holds except if $G=\sm{1}{*}{0}{*}$, if $G=\sm{*}{*}{0}{1}$, or if $E$ appears in Theorem~\ref{thm1} as an exception.
 \end{lem}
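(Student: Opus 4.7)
The plan is to adapt the strategy of Proposition~\ref{hi_vanishes_prop} and Lemma~\ref{h1_non_vanishing_lem} to the Borel-restricted setting imposed by the presence of the cyclic $p^2$-isogeny $\psi$. First, fix a basis $P_1, P_2$ of $E[p^i]$ such that $p^{i-2}P_1$ generates the Galois-stable subgroup $E[\psi]$. In this basis every element of $G_i$ has bottom-left entry divisible by $p^2$, which forces $H_1 = \ker(G_2\to G)$ to embed into the three-dimensional upper-triangular subspace $U\subset \Mat_2(\FF_p)$; for $i\geq 2$ the analogous divisibility constraint is vacuous, so $H_i$ still embeds into the full matrix space $M = \Mat_2(\FF_p)$. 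Invoking Theorem~\ref{greenberg_thm} together with the refined descriptions in~\cite{greenberg} and~\cite{grennbergetal}, these containments are in fact equalities.

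Next, rerun the inflation-restriction sequence~\eqref{infres_seq}. At the first step it reads
\[
0\to H^1(G,E[p])\to H^1(G_2,E[p])\to \Hom_G(U,E[p])\to H^2(G,E[p]),
\]
whereas at subsequent steps $U$ is replaced by $M$ and one may use the computation of $\Hom_G(M,E[p])$ in Proposition~\ref{hom_prop}. The new ingredient is $\Hom_G(U,E[p])$, which I would compute by the case-by-case method of Lemma~\ref{dim_hom_phi_lem} and the lemma that follows it, but with only three free parameters $\alpha$, $\beta$, $\delta$. The analog of~\eqref{hom_G_seq} with $U$ in place of $M$ reduces the calculation to determining $\Hom_G(U,E[\varphi])$ and $\Hom_G(U,E'[\hat\varphi])$ and checking which homomorphisms into the latter lift to $E[p]$; for $p=5$ one expects to find that $\Hom_G(U,E[p])$ is non-zero precisely when $G=\sm{1}{*}{0}{*}$ or $G=\sm{*}{*}{0}{1}$.

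Combining these inputs with Theorem~\ref{thm1} (which controls $H^1(G,E[p])$) and Lemma~\ref{h2_lem} (which controls $H^2(G,E[p])$) then isolates the three exceptional cases stated in the lemma. In the Theorem~\ref{thm1} case the non-vanishing is inherited from $H^1(G,E[p])$ under inflation. For $G=\sm{1}{*}{0}{*}$ the group $H^2(G,E[p])$ vanishes, so the restriction map above is surjective onto $\Hom_G(U,E[p])$. For $G=\sm{*}{*}{0}{1}$ the target $H^2(G,E[p])$ is non-zero and the main obstacle is to exhibit, in the manner of Lemma~\ref{h1_non_vanishing_lem}, an explicit lift of a generator of $\Hom_G(U,E[p])$ to a cocycle $G_2\to E[p]$, thereby showing that the connecting map vanishes. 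In each exceptional case $\dim H^1(G_2,E[p])$ then exceeds $\dim E(F)[p]$, so~\eqref{mult_p_seq} pushes the non-vanishing up to $H^1(G_2,E[p^2])$, and~\eqref{inf_res_M_seq} combined with Proposition~\ref{hom_prop} carries it further to $H^1(G_i,E[p^i])$ for all $i>1$; outside of the exceptional list the very same induction propagates vanishing and closes the biconditional.
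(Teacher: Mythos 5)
Your overall strategy matches the paper's: the cyclic $25$-isogeny forces every $g\in G_2\subset\GL_2(\ZZ/25)$ to have lower-left entry zero, hence $M_2=H_1$ sits inside the three-dimensional upper-triangular subspace $U$; Greenberg's index bound upgrades this containment to an equality; and the relevant $\Hom_G(U,E[p])$ is computed exactly as in Lemma~\ref{dim_hom_phi_lem} and its companion. Your list of the $G$ with nonvanishing $\Hom_G(U,E[p])$ is right, and the observation that $H_i\cong\Mat_2(\FF_p)$ for $i\geq 2$ (since the divisibility constraint on the lower-left entry becomes vacuous one level up) is indeed what lets the induction of Proposition~\ref{hi_vanishes_prop} run unchanged past level $2$.

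Where you genuinely depart from the paper, and where the write-up has a real gap, is the case $G=\sm{*}{*}{0}{1}$. You propose to show that the transgression $\Hom_G(U,E[p])\to H^2(G,E[p])$ vanishes by writing down an explicit cocycle on $G_2$ restricting to the generator $\sm{a}{b}{0}{d}\mapsto\bigl(\begin{smallmatrix}-b\\ a-d\end{smallmatrix}\bigr)$ of $\Hom_G(U,E[p])$, ``in the manner of Lemma~\ref{h1_non_vanishing_lem}''. But the explicit lift given there is only for $p=3$ with $M_2=\Mat_2(\FF_3)$, and it has a much simpler single-entry form; the present generator depends on all three upper-triangular entries, and the required cocycle on $G_2\subset\GL_2(\ZZ/25)$ is not an obvious write-down. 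You flag this as ``the main obstacle'' but do not resolve it, so the argument is incomplete as stated. The paper sidesteps the transgression entirely: it restricts along the cyclic subgroup $H=\langle\sm{1}{1}{0}{1}\rangle\leq G_2$ of order $p^2$, shows $H^1\bigl(H,E[p^2]\bigr)^{G_2/H}=0$, and reads off $H^1\bigl(G_2,E[p^2]\bigr)\cong H^1\bigl(G_2/H,E[p^2]^H\bigr)\cong\ZZ/p$, after which non-vanishing of $H^1\bigl(G_2,E[p]\bigr)$ follows from the sequence~\eqref{mult_p_seq} rather than having to be produced directly. Your route is not doomed---the connecting map does vanish, as the paper's computation shows a posteriori---but you should either supply the cocycle or switch to the paper's choice of normal subgroup.
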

 For instance, it is non-vanishing if $E$ admits a rational $5$-torsion point or if $E'$ admits a rational $5$-torsion point. The curves 11a3 and 11a2 are examples of these two situations where $H^1\bigl(G,E[p]\bigr)=0$, yet $H^1\bigl(G_i,E[p^i]\bigr) \neq 0$ for all $i>1$ because there are two $5$-isogenies 11a3 $\to$ 11a1 $\to$ 11a2 with only 11a3 and 11a1 having a rational $5$-torsion point. The cohomology group $H^1\bigl(G_2,E[25]\bigr)$ is also non-trivial for 11a1 by Proposition~\ref{hi_vanishes_prop}.

 \begin{proof}
  Note that there are no elliptic curves with rational points of order $25$ and there are no cyclic isogenies over $\QQ$ of degree $p^3=125$. Greenberg shows in Theorem~2 in~\cite{greenberg} that the index of $G_2$ in $\GL_2\bigl(\ZZ/25\bigr)$ is divisible by $5$ but not $25$. Hence the group $G_2$ can be identified with a subgroup of the upper triangular matrices modulo $p^2$, but the top left entry is not constant $1$ modulo $p^2$ and the top right corner is not constant zero modulo $p$. Since the index is only divisible by $5$ once, the group $G_2$ consists of all the upper triangular matrices that reduce to an element of $G$.

  We wish to use the same strategy as in the proof of Proposition~\ref{hi_vanishes_prop}, but we have to show that $G$-fixed part of $H^1\bigl(M_2,E[p]\bigr)$ is still zero despite $M_2\neq M$. This time $M_2$ can be identified with upper triangular matrices modulo $p$ and the computations are slightly easier. One finds that $\Hom_G\bigl(M_2,E[\varphi]\bigr)$ has dimension $2$ if $G=\sm{1}{*}{0}{*}$ and $0$ in all other cases. Similarly, the dimension of $\Hom_G\bigl(M_2,E'[\hat\varphi]\bigr)$ is equal to $2$ if $G=\sm{*}{*}{0}{1}$ and zero otherwise. (Alternatively, it is not too hard to show by direct calculation that the dimension of $\Hom_G\bigl(M_2,E[p]\bigr)$ is $2$ if $G=\sm{1}{*}{0}{*}$, $1$ if $G=\sm{*}{*}{0}{1}$, and 0 otherwise.)

  Hence if we assume that neither $G = \sm{*}{*}{0}{1}$ nor $G=\sm{1}{*}{0}{*}$ nor $G= \Bigl\{\sm{v^2}{w}{0}{v}\Bigm\vert v\in\FF_p^{\times}, w \in \FF_p\Bigr\}$, then $H^1\bigl(G_i,E[p^i]\bigr)=0$ for all $i\geq 1$ with the same proof as in Proposition~\ref{hi_vanishes_prop}.

  If $G=\sm{1}{*}{0}{*}$, then one can show as in Lemma~\ref{h1_non_vanishing_lem} that $H^1\bigl(G_i,E[p^i]\bigr)$ is non-zero for all $i>1$. Similarly for $G = \Bigl\{\sm{v^2}{w}{0}{v}\Bigm\vert v\in\FF_p^{\times},w\in \FF_p\Bigr\}$.

  Finally if $G=\sm{*}{*}{0}{1}$, then one may compute $H^1\bigl(G_2,E[p^2]\bigr)$ directly: the group $G_2$ consists of all upper triangular matrices modulo $p^2$ whose lower right entry is congruent to $1$ modulo $p$. Let $H$ be the subgroup generated by $\sm{1}{1}{0}{1}$. Then the method used in the proof of Theorem~\ref{thm1} shows that the subgroup of $H^1\bigl(H,E[p^2]\bigr)$ fixed by the action of $G_2/H$ is trivial. However $H^1\bigl(G_2/H,E[p^2]^H\bigr)\cong \ZZ/p\ZZ$ implies then that $H^1\bigl(G_2,E[p^2]\bigr)\cong \ZZ/p\ZZ$ where an explicit isomorphism sends a cocycle $\xi$ to the first coordinate of $\xi\bigl(\sm{1}{0}{0}{1+p}\bigr)$ in $ p\,\ZZ/p^2\ZZ$. From the exact sequence~\eqref{mult_p_seq}, one deduces that $H^1\bigl(G_2,E[p]\bigr)$ is non-trivial and again this implies that all $H^1\bigl(G_i,E[p^i]\bigr)$ are non-zero for $i>1$.
 \end{proof}

 \begin{lem}\label{49_lem}
   Let $E/\QQ$ be a quadratic twist of a curve of conductor $49$ and let $p=7$. Then $H^1\bigl(G_i,E[p^i]\bigr) = 0$ for all $i\geq 1$.
 \end{lem}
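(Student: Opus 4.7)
I would exploit the CM structure. Every curve of conductor $49$ has complex multiplication by $\mathcal{O} = \ZZ[(1+\sqrt{-7})/2]$, and this is preserved under quadratic twist. Write $K = \QQ(\sqrt{-7})$ and $\pi = \sqrt{-7}$; since $\pi$ is a uniformizer at the ramified prime $7$, the CM action makes $E[p^i] = E[\pi^{2i}]$ a free $\mathcal{O}/\pi^{2i}$-module of rank one, on which $\Gal(\bar\QQ/K)$ operates by multiplication via (the reduction modulo $\pi^{2i}$ of) the associated Hecke character $\psi \colon \Gal(\bar\QQ/K) \to \mathcal{O}_\pi^\times$. Because $K \subset \QQ(\mu_7) \subset K_1 \subset K_i$, the subgroup $N_i := \Gal(K_i/K)$ is normal of index $2$ in $G_i$, and it is abelian.

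The plan is then to run the inflation--restriction sequence for $N_i \lhd G_i$,
\[
 0 \to H^1\bigl(\Gal(K/\QQ), E[p^i]^{N_i}\bigr) \to H^1\bigl(G_i, E[p^i]\bigr) \to H^1\bigl(N_i, E[p^i]\bigr)^{\Gal(K/\QQ)},
\]
and to kill both outer terms from a single input: there exists $g \in N_i$ with $\psi(g) \not\equiv 1 \pmod{\pi}$, and hence $\psi(g) - 1$ is a unit in $\mathcal{O}/\pi^{2i}$. This unit acts invertibly on $E[p^i]$, which immediately forces $E[p^i]^{N_i} = 0$. For the $H^1$ term, the cocycle identity applied to $gh = hg$ yields the Sah-type relation $(g-1)\,\xi(h) = (h-1)\,\xi(g)$ for any $1$-cocycle $\xi$ and any $h \in N_i$; setting $x = (\psi(g)-1)^{-1}\xi(g)$ then gives $\xi(h) = (h-1)x$, so $\xi$ is a coboundary.

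To produce such a $g$, observe that $\psi \bmod \pi$ is precisely the character through which $\Gal(\bar\QQ/K)$ acts on the order-$p$ subgroup $E[\pi]$, the kernel of the rational $7$-isogeny on $E$; if it were trivial then $E(K)$ would contain a $7$-torsion point, which no conductor-$49$ curve nor any quadratic twist admits. (CM theory already makes this transparent, since $(\psi \bmod \pi)^{2}$ equals $\omega|_{\Gal(\bar\QQ/K)}$, which is the surjection onto the squares in $\FF_7^{\times}$.) For a quadratic twist $E^{(D)}$, the same argument applies after replacing $K$ by $K(\sqrt{D})$ if $\sqrt{D}\notin K$: the CM action on $E^{(D)}[p^i]$ over $K(\sqrt{D})$ is still abelian, governed by a twisted Hecke character whose mod-$\pi$ reduction remains nontrivial, while the intermediate layer $\Gal(K(\sqrt{D})/\QQ)$ has order coprime to $p = 7$ and so does not contaminate the inflation--restriction. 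The main technical obstacle is the clean invocation of the CM input and the uniform verification that the mod-$\pi$ character is nontrivial for every twist in the lemma's scope; once that is granted, the cohomological part is a formal Sah-type vanishing on abelian groups.
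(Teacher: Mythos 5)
Your argument is correct and takes a genuinely different route from the paper's. The paper's proof exhibits a non-trivial homothety directly: it writes the Frobenius at the well-chosen prime $\ell = 347$ in the CM basis $\sm{a}{b}{-7b}{a}$, uses $a_\ell = 4$ and $\det = 347$ to force $b \equiv 0 \pmod 7$, concludes that $\Fr_{347}$ reduces to the homothety $\sm{2}{0}{0}{2}$ in $G$, and then simply cites Lemma~\ref{homothety_lem}. Your argument instead works intrinsically with the CM structure: you exhibit $N_i = \Gal(K_i/K)$ as an abelian group acting $\mathcal{O}$-linearly on $E[p^i]$ through a character $\psi$, show $\psi(g) - 1$ is a unit in $\mathcal{O}/\pi^{2i}$ for suitable $g$ (using $(\psi \bmod \pi)^2 = \omega|_K$, which has order $3$, so $\psi \bmod \pi$ has order at least $3$ and survives any quadratic twist by $\chi_D$), and then kill both terms in inflation--restriction by the Sah-type identity $(g-1)\xi(h) = (h-1)\xi(g)$. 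This buys a computation-free proof that does not rely on finding a lucky prime, and it does not require a genuine homothety in the $\GL_2$-sense at all: your $g$ need only act by an $\mathcal{O}$-multiplication that is a unit $\not\equiv 1 \bmod \pi$, so it sidesteps the hypotheses of Lemma~\ref{homothety_lem} entirely. The trade-off is that you must invoke the CM theory of Hecke characters carefully, while the paper's check is a one-line matrix congruence. One small factual slip: the paper notes (and it is true, e.g.\ for the curves with $j=255^3$) that some of the conductor-$49$ curves have CM by the non-maximal order $\ZZ[\sqrt{-7}]$, not by $\ZZ[(1+\sqrt{-7})/2]$ as you assert uniformly. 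This does not harm your argument, however, because the conductor of $\ZZ[\sqrt{-7}]$ is $2$, which is prime to $7$; the completions at $7$ agree, and $E[7^i]$ remains a free rank-one $\mathcal{O}_{K,\pi}/\pi^{2i}$-module in every case.
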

 \begin{proof}
   Assume first that $E$ is one of the curves of conductor $49$. By assumption $E$ has complex multiplication by $\mathcal{O}$, where $\mathcal{O}$ is either $\ZZ[\sqrt{-7}]$ or the ring of integers in $\QQ(\sqrt{-7})$. Since $\QQ(\sqrt{-7})\subset K$, the subgroup $\Gal(K/\QQ(\sqrt{-7})) < G$ acts by $\mathcal{O}$-linear endomorphisms on $E[7^i]$. By scaling with the period, we may choose points $p$ and $\sqrt{-7} \cdot p$ as a basis for $E[7^i]$. Any lift of these forms a $\ZZ_7$-basis of the Tate module $T_7 E$. The endomorphism $a+b\sqrt{-7}$ with $a$, $b\in \QQ\cap \ZZ_7$ acts via $\sm{a}{b}{-7b}{a}$ on $T_7(E)$.

   The Frobenius element $\Fr_{\ell} \in \GL(T_7E)$ for $\ell = 347$ has trace $a_{\ell} = 4$ for all four curves of conductor $49$. Since $\ell$ splits in $\QQ(\sqrt{-7})$, the Frobenius $\Fr_{\ell}$ in $\GL_2(\ZZ_7)$ is a matrix of the above shape with trace $4$ and determinant $347$. We find that it is congruent to $\sm{2}{0}{0}{2}$ modulo $7$. Since $G$ contains now the homotheties by $2$ and $4$, the result follows from Lemma~\ref{homothety_lem}.

   Let now $E$ be a quadratic twist of a curve of conductor $49$. Then it is the quadratic twist of one of them by an integer $D$ coprime to $7$. The above homotheties are multiplied by a non-zero scalar and hence Lemma~\ref{homothety_lem} also implies the result for $E$.
 \end{proof}

 \begin{proof}[Proof of Theorem~\ref{thmi}]
   We combine the results from Proposition~\ref{hi_vanishes_prop}, Lemma~\ref{h1_non_vanishing_lem}, Lemma~\ref{25_lem} and Lemma~\ref{49_lem}. From these we conclude immediately that $H^1\bigl(G_i,E[p^i]\bigr)=0$ if and only if $H^1\bigl(G_2,E[p^2]\bigr)=0$.

   We are now left with making the list in Theorem~\ref{thmi} match with the non-vanishing cases. We start by verifying that the cohomology groups are non-vanishing in each of the five special cases in the theorem.

   \begin{itemize}
   \item First, if $E$ contains a rational point of order $p$, then $G=\sm{1}{0}{0}{*}$, $G=\sm{*}{0}{0}{1}$, or $G=\sm{1}{*}{0}{*}$.  In all these cases, the cohomology groups in question do not vanish by Lemma~\ref{h1_non_vanishing_lem} and Lemma~\ref{25_lem}.

   \item In the second point in the list of Theorem~\ref{thmi}, $p=5$ and the quadratic twist by $D=5$ of $E$ has a rational $5$-torsion point. Then $G$ is contained in $\bigl\{ \sm{v^2}{*}{0}{v}\bigm\vert v\in\FF_p^{\times}\bigr\}$. If $G$ is equal to that group, then Theorem~\ref{thm1} and Lemma~\ref{h1_non_vanishing_lem} or Lemma~\ref{25_lem} imply the non-vanishing of $H^1\bigl(G_2,E[p^2]\bigr)$. Otherwise we may choose the basis of $E[p]$ so that $G$ is contained in the diagonal matrices of this form, in which case Lemma~\ref{h1_non_vanishing_lem} proves the assertion.

   \item In the third point, $p=5$ and the quadratic twist by $D=5$ of $E'$ has a rational $5$-torsion point. Then $G$ is contained in $\bigl\{ \sm{u}{*}{0}{u^2}\bigm\vert u\in\FF_p^{\times}\bigr\}$. There is no isogeny of degree $25$ defined over $\QQ$ leaving from $E$, hence $G_2$ is greatest possible; therefore Lemma~\ref{h1_non_vanishing_lem} proves the desired non-vanishing.

   \item If we are in the situation of the fourth point in Theorem~\ref{thmi}, we are in the situation of Lemma~\ref{25_lem} and $G=\sm{*}{*}{0}{1}$. Therefore $H^1\bigl(G_2,E[p^2]\bigr)\neq 0$.

   \item In the final point, if $p=11$ and $E$ is 121c2, then Theorem~\ref{thm1} and Lemma~\ref{h1_non_vanishing_lem} shows the desired non-vanishing. If the curve is 121c1 instead, then $G=\bigl\{ \sm{u}{*}{0}{u^2}\bigm\vert u\in\FF_p^{\times}\bigr\}$ and Lemma~\ref{h1_non_vanishing_lem} treats this case too.
   \end{itemize}

   Next, we have to check that every case when the group $H^1\bigl(G_2,E[p^2]\bigr)$ is non-trivial is among the exceptional cases of Theorem~\ref{thmi} above.

   Let us assume first that $G_2$ is greatest possible and consider the cases in Lemma~\ref{h1_non_vanishing_lem}. If $G=\sm{1}{0}{0}{*}$ or $G=\sm{1}{*}{0}{*}$, then $E$ has a rational $p$-torsion point. By Mazur's Theorem on the torsion point on elliptic curves over $\QQ$, we know that this can only occur if $p=5$ or $p=7$ and we fall under the first point in the list of Theorem~\ref{thmi}. If $(E,p)$ appears as an exception in Theorem~\ref{thm1}, then either $p=5$ and we are in the situation of the second point, or $p=11$ and we are in the last point on the list. If $G$ is the group of all matrices of the form $\sm{v^2}{0}{0}{v}$, then the quadratic twist by $D=5$ has a rational $5$-torsion point and we are in the situation of the second point. Finally, assume $G$ is contained in the group $\bigl\{ \sm{u}{*}{0}{u^2}\bigm\vert u\in\FF_p^{\times}\bigr\}$. Then $p\equiv 2\pmod{3}$. If $p=5$, then the quadratic twist by $D=5$ of $E'$ has a rational $5$-torsion point and we are in the third case. If $p\geq 11$, then the proof that there is only one curve, namely 121c1, is very analogous to Lemma~\ref{121c2_lem}.

   Finally, we consider the cases in Lemma~\ref{25_lem}. If $G=\sm{1}{*}{0}{*}$, then $E$ has a rational $5$-torsion point and we are in the first point in the list. If $G=\sm{*}{*}{0}{1}$, then $E'$ admits a rational $5$-torsion point, which is the fourth point on the list. If $(E,p)$ appear as exceptions in Theorem~\ref{thm1}, then we fall into the second point on the list.
  \end{proof}

\section{Numerical computations}\label{sec:numer-comp}

 We used Magma~\cite{magma} to perform, for small primes $p$, the numerical computation of our cohomology group $H^1\bigl(G_2,V_2\bigr)$ for various subgroup $G_2 \leq \GL_2(\ZZ/p^2)$, where $V_2$ is the natural rank $2$ module over $\ZZ/p^2$ on which $G_2$ acts. We restricted our attention to groups with surjective determinants and we only considered groups up to conjugation in $\GL_2(\ZZ/p^2)$.

 We will continue to write $M_2$ for the kernel of reduction $G_2\to \GL_2(\FF_p)$ and $G$ for its image.

\subsection{$p=2$}
 For the prime $p=2$, the groups $H^1\bigl(G_2,V_2\bigr)$ are non-zero for $36$ conjugacy classes of subgroup $G_2\leq \GL_2(\ZZ/4)$ with surjective determinant. The possible cohomology groups are $\bigl(\ZZ/2\bigr)^k$ for $0\leq k\leq 6$ and $\ZZ/4$.  Non-trivial cohomology groups appear for all dimensions $1\leq d \leq 4$ of $M_2$.

\subsection{$p=3$}
 There are $41$ groups $G_2$ with non-vanishing $H^1\bigl(G_2,V_2\bigr)$. For thirteen of them the cohomology group is $\ZZ/3\oplus \ZZ/3$, for one it is $\ZZ/3\oplus\ZZ/3\oplus \ZZ/3$ and for all others it is just $\ZZ/3$. In all non-vanishing cases the image $G\leq \GL_2(\FF_3)$ of reduction has either non-trivial $H^0(G,V)$ or non-trivial $H^2(G,V)$, where $V$ is the $2$-dimensional vector space over $\FF_3$ with its natural action by $G\leq \GL_2(\FF_3)$. In other words, these numerical computations show that if the group $H^1\bigl(G_2,E[9]\bigr)$ is non-trivial for an elliptic curve $E/\QQ$, there is an isogeny $\varphi\colon E \to E'$ defined over $\QQ$ of degree $3$ such that either $\varphi$ or its dual $\hat\varphi$ has a rational $3$-torsion point in its kernel.

 The maximal order of the cohomology group appears for the group $G_2$ consisting of all matrices in $\GL_2(\ZZ/9)$ with reduction $\sm{1}{0}{0}{1}$ or $\sm{1}{0}{0}{-1}$ modulo $3$.

\subsection{$p=5$}
 There are $39$ groups $G_2$ with non-vanishing $H^1\bigl(G_2,V_2\bigr)$. For two of them, the group is $\ZZ/5\oplus\ZZ/5$, for one it is $\ZZ/25$ and for all others it is $\ZZ/5$. If we restrict to those groups for which $M_2$ has dimension $4$, then there are five cases as found in Section~\ref{results_i_sec}:
 \begin{center}
  \begin{tabular}{c|c|c|c|c|c}
   $G$                      & $\sm{v^2}{0}{0}{v}$ & $\sm{1}{0}{0}{*}$ & $\sm{u}{*}{0}{u^2}$ & $\sm{v^2}{*}{0}{v}$ & $\sm{1}{*}{0}{*}$ \\
   $|G|$                    & $4$                 & 4                 & 20                  & 20                                               & 20 \\
   $H^1\bigl(G_2,V_2\bigr)$ & $\ZZ/5$             & $\ZZ/5\oplus\ZZ/5$ & $\ZZ/5$ & $\ZZ/5$ & $\ZZ/5$
  \end{tabular}
 \end{center}
 This determines what the non-vanishing cohomology groups can be for this specific prime.

\subsection{$p=7$}
 Here we restricted our attention to the subgroups $G_2$ for which $M_2$ has dimension $4$. Then, as previously found, there are only two cases. The group $G$ can be of the form $\sm{1}{0}{0}{*}$ or $\sm{1}{*}{0}{*}$. In the first case the cohomology group $H^1\bigl(G_2,V_2\bigr)$ is $\ZZ/7\oplus\ZZ/7$; in the latter it is $\ZZ/7$.

\section{Applications to local and global divisibility of rational points}\label{sec:divisibility}

The cohomology groups that we have discussed in this paper also appear in the analogue of the Grunwald--Wang problem for elliptic curves. This question was raised by Dvornicich and Zannier in~\cite{dvornicich_zannier}.
\begin{gwqn}
 Let $E/\QQ$ be an elliptic curve, $P\in E(\QQ)$, and $m>1$. If $P$ is divisible by $m$ in $E(\QQ_\ell)$ for almost all $\ell$, is it true that $P$ is divisible by $m$ in $E(\QQ)$ ?
\end{gwqn}

By the Chinese remainder theorem, it is sufficient to restrict to the case when $m=p^i$ is a prime power. The answer is positive if $m$ is prime. The explicit example in~\cite{dvornicich_zannier_4} shows that the answer is negative for $m=4$. In~\cite{paladino_5}, it is shown that the answer is positive for all $m=p^2$ with $p$ a prime larger than $3$. To our knowledge, the case $m=9$ has not been determined.

This question connects to our cohomology groups through the following reinterpretation. Suppose $m=p^i$ for our fixed prime $p$. Let $\Sigma$ be a finite set of places in $\QQ$. Let
\begin{equation*}
  D(E/\QQ) = \ker \Bigl ( E(\QQ)/p^i E(\QQ) \to \prod_{v\not\in \Sigma} E(\QQ_v)/p^i E(\QQ_v) \Bigr)
\end{equation*}
be the group that measures if there are points $P$ that are locally divisible by $p^i$, but not globally. Let
\begin{equation}\label{ln_eq}
  L(E/\QQ) = L(G_i)=\ker \Bigl ( H^1\bigl( G_i, E[p^i] \bigr) \to \prod_{\substack{C \leq G_i\\C\text{ cyclic}}} H^1\bigl( C, E[p^i]\bigr) \Bigr)
\end{equation}
be the kernel of reduction to all the cyclic subgroups of $G_i$. We now assume that $\Sigma$ contains all places above $p$ and all bad places. By Chebotarev's theorem, $L(E/\QQ)$ is also the kernel of localization from $H^1\bigl( G_i, E[p^i] \bigr)$ to all $H^1(D_{w\mid v}, E[p^i]\bigr)$ where $D_{w\mid v}$ is the decomposition group in $K_i/\QQ$ of a place $w$ above $v$. Hence a natural notation for $L(E/\QQ)$ could be $\Sha^1\bigl(U, E[p^i]\bigr)$ with $U$ the complement of $\Sigma$ in $\operatorname{Spec}(\ZZ)$. The sequence
\begin{equation*}
  \xymatrix@1{ 0\ar[r] & D(E/\QQ) \ar[r] & L(E/\QQ)\ar[r] & H^1\bigl(G_i, E(K_i)\bigr) }
\end{equation*}
is exact. Hence the answer is positive for $m=p^i$ if $H^1\bigl(G_i, E[p^i]\bigr)$ vanishes. Note that the description of $L(E/\QQ)$ in~\eqref{ln_eq} is now entirely group-theoretic, and can be computed numerically with the methods described in the previous section.

\begin{thm}\label{div_thm}
  Let $p$ a prime and $i\geq 1$. Then the Grunwald--Wang problem for local-global divisibility by $m=p^i$ admits a positive answer for all elliptic curves $E/\QQ$ if and only if $p>3$ or $m=2$ or $m=3$.
\end{thm}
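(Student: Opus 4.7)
The plan is to use the exact sequence
\[
  0 \to D(E/\QQ) \to L(E/\QQ) \to H^1\bigl(G_i, E(K_i)\bigr)
\]
to reduce the positive direction to showing $L(E/\QQ) = 0$, and to exhibit explicit counterexamples for the negative direction. I would split the positive direction by whether $i = 1$ (so $m = p$) or $i \geq 2$ with $p > 3$.

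For $i = 1$, for any prime $p$ and any $E/\QQ$, the $p$-Sylow subgroup $P$ of $G \leq \GL_2(\FF_p)$ is cyclic of order at most $p$; from the classification employed in Section~\ref{sec:prel-notat}, either $p \nmid |G|$ or $G$ lies in a Borel and $P$ is generated by a conjugate of $\sm{1}{1}{0}{1}$. Since restriction $H^1(G, E[p]) \to H^1(P, E[p])$ is injective on the $p$-primary part by the transfer argument, and a cyclic group restricts injectively to itself, every nonzero class in $H^1(G, E[p])$ restricts non-trivially to some cyclic subgroup. Hence $L(E/\QQ) = 0$ and so $D(E/\QQ) = 0$, handling $m = 2$, $m = 3$, and indeed $m = p$ for every prime.

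For $i \geq 2$ and $p > 3$, Theorem~\ref{thmi} gives $H^1(G_i, E[p^i]) = 0$ outside an explicit finite list of exceptional curves, yielding $L(E/\QQ) = 0$ immediately in the generic case. For each exceptional curve I would verify $L(E/\QQ) = 0$ by a dichotomy: using the inflation--restriction sequence as in Lemma~\ref{h1_non_vanishing_lem} together with~\eqref{mult_p_seq}, every nonzero class in $H^1(G_i, E[p^i])$ either comes by inflation from $H^1(G, E[p])$ --- in which case the cyclic $p$-Sylow of $G \subset G_i$ detects it as above --- or comes from a nonzero $G$-equivariant homomorphism $f \colon H_j \to E[p]$ for some $j < i$, in which case $f(A) \neq 0$ for some $A \in H_j$ and the class restricts non-trivially to the cyclic subgroup of $G_i$ generated by a lift of $A$. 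For the negative direction, the $m = 4$ example of Dvornicich and Zannier~\cite{dvornicich_zannier_4} and the $m = 9$ example of Proposition~\ref{div9_prop} are the base cases; to cover $m = 2^i$ and $m = 3^i$ with $i \geq 3$, I would either exhibit new examples directly, or take the points $2^{i-2} P$ and $3^{i-2} P$ on these original curves, since local divisibility is preserved automatically and global non-divisibility follows by tracking the rational torsion subgroup.

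The main obstacle is the second positive case: while the dichotomy outlined above reduces the verification to a finite number of exceptional curves, identifying an explicit cyclic subgroup of $G_i \leq \GL_2(\ZZ/p^i)$ detecting each non-zero class requires careful examination of the explicit cocycles and of the $G$-module structure on the kernels $H_j$. The negative-direction extensions to higher powers of $2$ and $3$ are more routine, but justifying that the same underlying curves and modified points continue to work requires care with the rational torsion subgroup.
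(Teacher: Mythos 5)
Your overall architecture — use the exact sequence
\[
0 \to D(E/\QQ) \to L(E/\QQ) \to H^1\bigl(G_i,E(K_i)\bigr),
\]
reduce the positive direction to $L(E/\QQ)=0$, and use explicit counterexamples for $m=2^i,3^i$ with $i\geq 2$ — matches the paper, and your $i=1$ argument via restriction to the cyclic $p$-Sylow is sound. The counterexample-scaling argument for higher powers is also essentially what the paper does (though the paper dispenses with your worry about torsion by simply insisting the base examples are points of infinite order).

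However, the dichotomy you propose for $i\geq 2$, $p>3$ contains a genuine gap at the decisive step. You claim that if a class $\xi\in H^1(G_i,E[p^i])$ restricts to a nonzero $G$-equivariant homomorphism $f\colon H_j\to E[p]$, then choosing $A\in H_j$ with $f(A)\neq 0$, the class $\xi$ must restrict non-trivially to the cyclic subgroup $C$ generated by a lift $\sigma$ of $A$. This is false as stated: the restriction of $\xi$ lands in $H^1(C,E[p^i])$, where the relevant coboundaries are $(\sigma-1)E[p^i]$, and $\xi(\sigma)$ can easily land there even when $f(A)\neq 0$. The paper's derivation makes this precise: translating through the inflation–restriction and multiplication-by-$p$ sequences, an element of $L(M_{i+1})$ corresponds to a homomorphism $f\colon \tilde M_2\to\FF_p^2$ whose local condition at the cyclic subgroup generated by $m$ is $f(m)\in\im(m)$ — not $f(m)=0$. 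The real content of the proof, which your outline does not reach, is then the concrete verification that when $\tilde M_2$ is the full matrix algebra (or the upper-triangular one), a homomorphism $f$ satisfying $f(m)\in\im(m)$ for every $m$ must be of the coboundary form $f(m)=m(T)$; this requires testing $f$ against a judicious handful of singular matrices, which is where the work actually lies. Relatedly, the paper does not invoke Theorem~\ref{thmi} to reduce to a finite list of exceptional curves and then argue case by case; it invokes Greenberg's Theorem~\ref{greenberg_thm} to pin down the structure of $M_i$ and then runs a single group-theoretic argument, split into an inflation piece from $H^1(G,E[p])$ (where your Sylow-detection idea is essentially correct and is implemented via a two-step injection through $E[p^i]^{C\cap M_i}$) and the harder piece $L(M_i)$, handled by induction on $i$ with the cocycle computation above as base and inductive engine. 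Your sketch identifies the right decomposition but underestimates the obstruction: getting from "$f\neq 0$" to "$\xi$ restricts non-trivially to some cyclic $C$" is exactly the non-trivial computation you would need to carry out.
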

\begin{proof}
  If we find a point $P$ of infinite order that is a counter-example for $m=p^i$, then $p^jP$ is a counter-example for $m=p^{i+j}$ for any $j>0$. As mentioned before, the negative answer for $m=4$ is explained in~\cite{dvornicich_zannier_4}. This settles also all higher powers of $2$ as their examples are points of infinite order. Counter-examples when $m$ is a power of $3$ were first found by Creutz in~\cite{creutz}. We will give below in Proposition~\ref{div9_prop} a new counter-example of infinite order for $m=9$. For $p\geq 5$ the theorem follows from~\cite{paladino_5}. However, we wish to give a slightly simplified proof with our methods.

 Assume therefore $p\geq 5$. We will now show that the kernel of localization $L(G_i)$ is zero.  Note that by Greenberg's result in Theorem~\ref{greenberg_thm} and the work done in the exceptional cases in Lemma~\ref{25_lem} and Lemma~\ref{49_lem}, we may assume that $G_i$ is greatest possible or that $i=2$ and $M_2$ consists of all matrices $m$ such that $m-1$ is upper triangular. In both cases the elements in $E[p^i]$ fixed by $M_i$ are just $E[p]$. We get an exact sequence
 \begin{equation*}
  \xymatrix@1{ 0\ar[r] & H^1\bigl(G,E[p]\bigr) \ar[r]^{\inf} & H^1\bigl(G_i,E[p^i]\bigr) \ar[r] &  H^1\bigl( M_i, E[p^i]\bigr). }
 \end{equation*}
 First assume that $L(G_i)$ contains a non-trivial element which belongs to the image of the inflation map from $H^1\bigl( G, E[p]\bigr)$. Since the latter must now be non-trivial, $G$ must contain the element $\bar h =\sm{1}{1}{0}{1}$. By the description of $M_i$, we find that $G_i$ contains the element $h= \sm{1}{1}{0}{1}$. Let $C$ be the cyclic group generated by $h$ and let $\bar C$ be its image in $G$. Our computations for proving Theorem~\ref{thm1} showed that $H^1\big(G,E[p]\bigr)\to H^1(\bar C, E[p]\big)$ is a bijection. Next, both maps in the composition
\begin{equation*}
  \xymatrix@1{H^1\bigl( \bar C, E[p] \bigr) \ar[r] & H^1\bigl( \bar C, E[p^i]^{C\cap M_i}\bigr)\ar[r] & H^1\bigl(C, E[p^i]\bigr)
 }
\end{equation*}
 are injective: for the latter it is because any inflation map is injective, and for the first it can be read off the long exact sequence associated to the inclusion $E[p]\to E[p^i]^{C\cap M_i}$. We conclude that $H^1\bigl(G,E[p]\bigr)\to H^1\bigl(C,E[p^i]\bigr)$ is injective. This now contradicts the assumption that $L(G_i)$ contained a non-trivial element from $H^1\bigl(G,E[p]\bigr)$.

 Therefore, $L(G_i)$ injects into to
 \begin{equation*}
 L(M_i) = \ker \Bigl( H^1\bigl( M_i, E[p^i] \bigr)\to \prod_{\substack{C\leq M_i\\ \text{cyclic}}} H^1\bigl(C,E[p^i]\bigr) \Bigr),
 \end{equation*}
 where the product now runs over all cyclic subgroups of $M_i$. We will now prove by induction on $i$ that $L(M_i)$ is trivial. It is known for $i=1$.

 Recall that the group $H_i$ acts trivially on $E[p^i]$. We consider the following diagram with exact rows:
 \begin{equation}\label{liind_eq}
   \xymatrix{
     0\ar[r] & H^1\bigl(M_i,E[p^i]\bigr)\ar[r] \ar[d]& H^1\bigl(M_{i+1},E[p^i]\bigr)\ar[r]\ar[d] & H^1\bigl(H_i,E[p^i]\bigr) \ar[d]\\
     0\ar[r] & \prod_C H^1\bigl(C/C\cap H_i, E[p^i]^{C\cap H_i}\bigr) \ar[r] &
      \prod_C H^1\bigl(C, E[p^i]\bigr) \ar[r] &
      \prod_C H^1\bigl(C\cap H_i, E[p^i]\bigr)
   }
 \end{equation}
 where the products run over all cyclic subgroups $C$ of $M_{i+1}$. Now the vertical map on the right hand side has the same kernel as
 \begin{equation*}
   \xymatrix@1{H^1\bigl(H_i,E[p^i]\bigr) = \Hom\bigl(H_i,E[p^i]\bigr)\ar[r] & \prod\limits_{\substack{D\leq H_i\\ \text{cyclic}}} \Hom\bigl( D, E[p^i]\bigr) }
 \end{equation*}
 and this map is clearly injective. Since $C\cap H_i$ fixes $E[p^i]$ the vertical map on the right in the above diagram~\eqref{liind_eq} is injective by induction hypothesis because $C/C\cap H_i \cong CH_i/H_i$ will run through all cyclic subgroups of $M_i$ at least once. Therefore the middle vertical map in~\eqref{liind_eq} is injective, too.

 Next, consider the following diagram with exact rows:
 \begin{equation*}
  \xymatrix{ 0\ar[r] & E[p] \ar[r]^(0.3){\delta} & \Hom\bigl(M_{i+1},E[p]\bigr) \ar[r]^{\iota}\ar[d] & H^1\bigl(M_{i+1}, E[p^{i+1}]\bigr) \ar[r]^{[p]} \ar[d] & H^1\bigl(M_{i+1}, E[p^i]\bigr)\ar[d] .\\
         E[p^{i+1}]^C\ar^{[p]}[r]    & E[p^i]^{C} \ar[r]^(0.3){\delta_C} & \Hom\bigl(C,E[p]\bigr) \ar[r] & H^1\bigl(C, E[p^{i+1}]\bigr) \ar[r] & H^1\bigl(C, E[p^i]\bigr) }
 \end{equation*}
 Here $C$ is any cyclic subgroup of $M_{i+1}$. The zero at the top left corner is a consequence from the fact that the $M_{i+1}$-fixed points in $E[p^j]$ are exactly $E[p]$ for all $1\leq j \leq i+1$.

 If $\xi\in L(M_{i+1})$, then its image under $[p]$ in $H^1\bigl(M_{i+1}, E[p^i]\bigr)$ must be trivial by what we have shown for the middle vertical map in~\eqref{liind_eq}. Therefore $\xi$ is the image under $\iota$ of an element $f$ in $\Hom\bigl(M_{i+1}, E[p]\bigr)$. Since $E[p]$ is $p$-torsion, we can identify $\Hom\bigl(M_{i+1}, E[p]\bigr)$ with $\Hom\bigl(M_2,E[p]\bigr)$. To say that $\xi$ restricts to zero for a cyclic group $C\leq M_{i+1}$ forces $f\colon M_2\to E[p]$ to be in the image of the map $\delta_C\colon E[p]\to\Hom\bigl(C,E[p]\bigr)$ for all cyclic subgroups $C$ of $M_2$.

 Now we identify $M_2$ with the additive subgroup $\tilde M_2\leq \Mat_{2}(\FF_p)$ as before. Under this identification the map $\delta$ sends a $p$-torsion point $T\in E[p]=\FF_p^2$ to the map $f$ sending a matrix $m\in \tilde M_2$ to $m(T)$. Thus, the restriction of $f$ to $\Hom\bigl(\langle m \rangle, \FF_p^2\bigr)$ is in the image of $\delta_{\langle m \rangle}$ for a particular $m\in\tilde M_2$ if and only if $f(m)\in\FF_p^2$ belongs to the image of $m$. Therefore, we have shown that
 \begin{equation} \label{ln_hom_eq}
  L(M_{i+1}) = \frac{ \Bigl\{f \in \Hom\bigl(\tilde M_2, \FF_p^2\bigr) \ \Bigm\vert f(m) \in \im(m)\  \forall m\in \tilde M_2 \Bigr\} }{ \Bigl\{ f(m) = m(T) \text{ for some }T\in \FF_p^2\Bigr\} }.
 \end{equation}
 We wish to show that $L(M_{i+1})$ is trivial if $\tilde M_2$ is the full matrix group or the upper triangular matrices. Assume first that $\tilde M_2$ is the full matrix group. Then $f$ is determined by its image on the matrices with only one non-zero entry. However, the local condition of being in the image of $\delta_C$ for these matrices and the matrices $\sm{1}{0}{1}{0}$ and $\sm{0}{1}{0}{1}$ forces $f(m)$ to be just $m(T)$ for $T= f\bigl( \sm{1}{0}{0}{0} \bigr) + f\bigl(\sm{0}{0}{0}{1} \bigr)$. Therefore $L(M_{i+1})$ is trivial. The case when $\tilde M_2$ is the group of upper triangular matrices is very similar.
\end{proof}

The result about the vanishing of $L(G_2)$ for $p>3$ in the above proof is reminiscent of Proposition~3.2.ii in~\cite{dvornicich_zannier}. We have reproved part of this result with a more conceptual approach. The main reason for doing so is that the general statement there is slightly incorrect. The case $\dim(M_2) = 3$ assumes that $\sm{1}{1}{0}{1}$ belongs to $G_2$. However, for $p=3$, the group $G_2$ generated by $\sm{7}{8}{3}{1}$ and the group of all matrices $m$ with $m-1$ upper-triangular is a counterexample. This group does not contain any elements of order $9$ and one can compute that $L(G_2)$ is isomorphic to $\ZZ/3$.

%\todo[inline]{The description in~\eqref{ln_hom_eq} seems to be of independent interest, but already implicit in~\cite{dvornicich_zannier}. This quotient can be nontrivial even when $\dim (M_2) = 3$: Take the group of trace zero matrices and $p=2$.}
We include here a new counter-example for $m=9$; the method is quite different from~\cite{creutz} where a first such example was found.
\begin{prop}\label{div9_prop}
  Let $E$ be the elliptic curve labeled 243a2, given by the global minimal equation $y^2 + y = x^3+20$, and let $P=(-2,3)$. Then $3\, P$ is divisible by $9$ in $E(\QQ_{\ell})$ for all primes $\ell\neq 3$, but it is not divisible by $9$ in $E(\QQ)$.
\end{prop}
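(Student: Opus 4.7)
The plan is to translate the divisibility question into Galois cohomology and then apply the local-global machinery from the proof of Theorem~\ref{div_thm}. For any $Q\in E(\bar\QQ)$ with $9Q = 3P$, the cocycle $\sigma\mapsto \sigma(Q) - Q$ yields a class $\chi_P\in H^1\bigl(\Gal(\bar\QQ/\QQ), E[9]\bigr)$ whose localization at $\QQ_\ell$ (respectively at $\QQ$) controls divisibility of $3P$ by $9$ in $E(\QQ_\ell)$ (respectively in $E(\QQ)$). Since $E=$~243a2 has conductor $3^5$, the extension $K_2=\QQ(E[9])$ is unramified outside $3$, and hence so is $\chi_P$. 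The Chebotarev-plus-unramified-cohomology argument of Theorem~\ref{div_thm} then shows that $3P$ is $9$-divisible in $E(\QQ_\ell)$ for all $\ell\neq 3$ if and only if $\chi_P\in L(G_2)$ in the notation of~\eqref{ln_eq}.

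I would first use the $3$-division polynomial $\psi_3=3x(x^3+81)$ to verify that the $\QQ$-rational $3$-torsion of $E$ is cyclic of order~$3$, generated by $T=(0,4)$, and then re-run the standard Mordell-Weil descent for this curve to obtain $E(\QQ)\cong \ZZ\cdot P\oplus \ZZ/3\ZZ\cdot T$, which forces $3P\notin 9E(\QQ)$ and hence $\chi_P\neq 0$. Next, I would pin down $G_2\leq\GL_2(\ZZ/9)$: the presence of the rational $3$-torsion $T$ and the uniqueness of the $\QQ$-rational subgroup of order~$3$ place $G$ inside $\sm{1}{*}{0}{*}\subset\GL_2(\FF_3)$, while the precise structure of the 243a isogeny graph pins down the exact image of $G_2$, in the spirit of Lemma~\ref{25_lem} for $p=5$. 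With this $G_2$ in hand, the task reduces to computing $L(G_2)$ directly; the key assertion is that $L(G_2)\neq 0$, reflecting the exceptional status of $p=3$ excluded from Theorem~\ref{div_thm}, and matching the counterexample sketched at the end of its proof.

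The main obstacle is to confirm that $\chi_P$ is actually a non-trivial element of $L(G_2)$, rather than merely a class in $H^1(G_2,E[9])$ that fails to be globally trivial. For this I would construct explicit local divisions $9Q_\ell=3P$ in $E(\QQ_\ell)$ at enough primes $\ell$ for their Frobenius conjugacy classes $\Fr_\ell$ to realize every cyclic subgroup of $G_2$. At primes $\ell\neq 3$ of good reduction, the $9$-divisible formal group $\hat E(\ell\ZZ_\ell)$ reduces this to dividing $3\tilde P$ by~$9$ in the finite group $\tilde E(\FF_\ell)$; when $3\nmid|\tilde E(\FF_\ell)|$ this is automatic, and for the remaining primes with $3\mid|\tilde E(\FF_\ell)|$ a finite case-analysis via a computer-algebra system at a handful of small primes $\ell=2,5,7,11,13,\ldots$ suffices. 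Together with the non-triviality of $\chi_P$, these checks force $\chi_P$ to be the generator of $L(G_2)$, which completes the proof that $3P$ is $9$-divisible in $E(\QQ_\ell)$ for every $\ell\neq 3$.
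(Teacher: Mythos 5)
Your proposal takes a genuinely different route from the paper, so let me first compare. The paper argues directly, in two cases. It explicitly exhibits a point $P'\in E(k)$, where $k$ is the cyclic cubic subfield of $\QQ(\mu_9)$, with $3P'=P$; this gives $3P=9P'$ and disposes of every $\ell\equiv\pm1\pmod 9$ (those $\ell$ split in $k$, so $P'\in E(\QQ_\ell)$). For the remaining $\ell\neq 3$, it pins down $K_2=\QQ(E[9])$ and the fields of definition of the order-$9$ points to deduce that $\Fr_\ell$ fixes no point of order $9$, whence $\tilde E(\FF_\ell)[9]=\tilde E(\FF_\ell)[3]$, and a short diagram chase with the formal group finishes. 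Your proposal instead tries to run the Chebotarev mechanism of Theorem~\ref{div_thm}: realize $\chi_P$ as a class that restricts to zero on all cyclic subgroups of $G_2$, and verify local divisibility at a finite list of primes.

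There is, however, a gap in your Chebotarev step. The local condition ``$3P\in 9E(\QQ_\ell)$'' is controlled by the conjugacy class of $\Fr_\ell$ only inside the group $\Gal(M/\QQ)$, where $M=K_2\cdot\QQ(Q)$ for $Q$ a fixed point with $9Q=3P$; it is \emph{not} controlled by the image of $\Fr_\ell$ in $G_2=\Gal(K_2/\QQ)$ unless one first shows $M=K_2$, i.e.\ that $3P$ is already divisible by $9$ in $E(K_2)$. Equivalently, $\chi_P$ does not a priori lie in the image of the inflation map $H^1(G_2,E[9])\to H^1(\QQ,E[9])$, so your asserted ``if and only if $\chi_P\in L(G_2)$'' is not well-posed without that preliminary step, and checking local divisibility at primes realizing all cyclic subgroups of $G_2$ does not control all $\ell\neq 3$. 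That the needed factorization holds here is exactly what the paper establishes at the outset by exhibiting $P'\in E(k)\subset E(K_2)$ with $3P'=P$; any fix of your argument would have to begin with an equivalent verification. A second, lesser point: your inference ``conductor $3^5\Rightarrow K_2$ unramified outside $3\Rightarrow\chi_P$ unramified outside $3$'' conflates unramification of the coefficient module with unramification of the class; the reason $\chi_P$ is unramified at good $\ell\neq 3$ is that $E(\QQ_\ell^{\mathrm{nr}})$ is $9$-divisible, not the unramification of $K_2$. Finally, even granting the factorization through $G_2$, the proposal does not identify $G_2$ (the paper finds $|G_2|=18$, far from ``greatest possible'') nor confirm that the proposed handful of small primes actually realizes every cyclic subgroup, so the decisive computations remain to be done.
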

\begin{proof}
  Since $P$ is a generator of the free part of this curve of rank $1$, it is clear that $3P$ is not divisible by $9$ in $E(\QQ)$.

  Let $k$ be the unique subfield of $\QQ(\mu_9)$ of degree $3$ over $\QQ$ and let $\zeta$ be a primitive $9$-th root of unity. Then $P' = (3\zeta^5 + 3\zeta^4+3, 9\zeta^4-9\zeta^2+9\zeta+4)\in E(k)$ satisfies $3P'=P$. Thus, if $\ell\equiv \pm 1\pmod{9}$, then $\ell$ splits in $k$ and hence $P$ is divisible by $3$ in $E(\QQ_{\ell})$. As a consequence, $3P$ is divisible by $9$ over $\QQ_{\ell}$.

  For this curve and $p=3$, the group $G=\sm{1}{*}{0}{*}$ is of order $6$ and $K=\QQ(\theta)$ with $\theta^6+3=0$. Factoring the $9$-division polynomial, one finds that $K_2/K$ is an extension of degree $3$. The field of definition of the points of order $9$ is $K_2$ except for those points $T$ with $3\, T \in E(\QQ)[3]$. Those are defined instead over a non-Galois extension of degree $3$ over $k$.

  Let $\ell\not\equiv \pm 1 \pmod{9}$ and $\ell\neq 3$. Then the Frobenius element $\Fr_{\ell}$ in $G_2$ cannot belong to $\Gal(K_2/k)$. Therefore $\Fr_{\ell}$ does not fix any point of order $9$. It follows that $\tilde E(\FF_{\ell})[9] = \tilde E(\FF_{\ell})[3]$. Consider the following commutative diagram, whose lower row is exact.
  \begin{equation*}
    \xymatrix{
    &&P\in E(\QQ)/3 E(\QQ) \ar[r]^{[3]} \ar[d] & E(\QQ)/9 E(\QQ) \ar[d] \\
    E(\QQ_{\ell})[9] \ar[r]^{[3]} & E(\QQ_{\ell})[3] \ar[r]^(0.4){\delta} & E(\QQ_{\ell})/3E(\QQ_{\ell}) \ar[r]^{[3]} & E(\QQ_{\ell})/9E(\QQ_{\ell})
    }
  \end{equation*}
  Since $\ell\neq 3$, the reduction of $E$ at $\ell$ is good and hence $[3]$ is an isomorphism on the kernel of reduction $E(\QQ_{\ell})\to \tilde E(\FF_\ell)$. It follows that $E(\QQ_{\ell})[3]\cong \tilde E(\FF_{\ell})[3]$ and $E(\QQ_{\ell})/3E(\QQ_{\ell}) \cong \tilde E(\FF_{\ell})/3 \tilde E(\FF_{\ell})$ have the same size. By the above argument $\delta$ is an injective map between two groups of the same size. Thus $\delta$ is a bijection. This implies that $3P$ is divisible by $9$ in $E(\QQ_{\ell})$.
\end{proof}

This is the counter-example of smallest conductor for $m=9$; here is how we found that this curve is a likely candidate.

Consider curves $E$ with a $3$-isogeny where either the kernel has a rational $3$-torsion point or where the kernel of the dual isogeny has a rational $3$-torsion point. On the one hand, we computed (for a few thousand primes $\ell\neq 3$ of good reduction) the pairs $(a_{\ell}(E), \ell)$ modulo $9$. On the other hand, we may determine all subgroups $G_2\leq \GL_2(\ZZ/9)$ with surjective determinant to find the examples for which the kernel~\eqref{ln_eq} is non-trivial. There are 13 such groups. The dimension of $M_2$ in these cases is $1$, $2$ or $3$. For each of them we may list pairs $(\operatorname{tr}(g), \det(g))$ when $g$ runs through all matrices $g \in G_2$.

Now, if the list of possible pairs $(a_{\ell}(E), \ell)$ modulo $9$ agrees with one of the lists above, then $G_2$ could be among the groups for which the localization kernel is non-trivial. Furthermore, it is easy to check local divisibility for primes $\ell<1000$ for all possible candidates in $3E(\QQ)/9E(\QQ)$. The above curve 243a2 was the first to pass all these tests.

Here are a few more candidates. Note that we have not formally proved that local divisibility holds by $9$ holds for all primes $\ell$ of good reduction.

The point $P=(6,17)$ on the curve 9747f1 gives a point $3P$ which is likely to be locally divisible by $9$ for \textit{all} primes, but not divisible by $9$ globally. In this example $G_2$ has $54$ elements.

On the curve 972d2 the point $3P$ with $P=(13,35)$ is likely to be locally divisible by $9$ for all places $\ell\neq 3$, yet not globally so. This is a curve without a rational $3$-torsion point and $G_2$ having $54$ elements again.

All the above examples have complex multiplication by the maximal order in $\QQ(\sqrt{-3})$. The curve 722a1, with a point $P$ having $x$-coordinate $\tfrac{27444}{169}$, is an example without complex multiplication and $\vert G_2\vert =162$. Again, it is likely that $3P$ is locally divisible by $9$ at all places $\ell\neq 19$, but $3P$ is not globally divisible by $9$. The group $G_2$ here is probably conjugate to the one mentioned earlier as a counter-example to Proposition~3.2.ii in~\cite{dvornicich_zannier}.

We have also done numerical calculation of the kernel in~\eqref{ln_eq} for other primes. For $p=5$, there are only three subgroups $G_2$ in $\GL_2(\ZZ/25)$ with non-trivial localization kernel. They all have $\dim (M_2) = 2$ and $\vert G\vert =4$.

For $p=2$, there are twelve cases. The dimensions of $M_2$ can be $1$, $2$, or $3$. In only one of these cases is the localization kernel is $\ZZ/2\oplus\ZZ/2$; otherwise it is $\ZZ/2$.

%=========================================

\bibliographystyle{amsplain}
\bibliography{h1div}

\end{document}